\documentclass[reqno]{amsart}
\usepackage[scale=0.75, centering, headheight=14pt]{geometry}
\usepackage[T1]{fontenc}
\usepackage{lmodern}
\usepackage[english]{babel}

\usepackage{tikz}
\usepackage{bbm}
\usepackage{amsmath,amssymb,amsfonts,amsthm}
\usepackage{mathtools,accents}
\usepackage{mathrsfs}
\usepackage{xfrac}
\usepackage{array} 
\usepackage{aliascnt}
\usepackage{booktabs} 
\usepackage{array} 

\usepackage{verbatim} 
\usepackage{subfig} 

\usepackage{mathrsfs, dsfont}
\usepackage{amssymb}
\usepackage{amsthm}
\usepackage{amsmath,amsfonts,amssymb,esint}
\usepackage{graphics,color}
\usepackage{enumerate}
\usepackage{mathtools,centernot}

\usepackage{microtype}
\usepackage{paralist} 
\usepackage{cases}
\usepackage[initials]{amsrefs}
\allowdisplaybreaks

\usepackage{braket}
\usepackage{bm}

\usepackage[citecolor=blue,colorlinks]{hyperref}
\addto\extrasenglish{}

\usepackage{enumerate}
\usepackage{xcolor}
\usepackage{aliascnt}

\makeatletter
\def\newaliasedtheorem#1[#2]#3{
  \newaliascnt{#1@alt}{#2}
  \newtheorem{#1}[#1@alt]{#3}
  \expandafter\newcommand\csname #1@altname\endcsname{#3}
}
\makeatother

\def\avint{\mathop{\mathchoice{\,\rlap{-}\!\!\int}
                              {\rlap{\raise.15em{\scriptstyle -}}\kern-.2em\int}
                              {\rlap{\raise.09em{\scriptscriptstyle -}}\!\int}
                              {\rlap{-}\!\int}}\nolimits}

\def\avint{\mathop{\,\rlap{-}\!\!\int}\nolimits}



\newcommand{\A}{\mathcal{A}}

\DeclareMathOperator{\id}{id}

\newcommand{\R}{\mathbb{R}}
\newcommand{\E}{\mathds{E}}
\newcommand{\U}{\mathcal{U}}
\newcommand{\N}{\mathbb{N}}

\DeclareMathOperator{\curl}{curl}

\DeclareMathOperator{\m}{m}

\DeclareMathOperator{\cof}{cof}
\DeclareMathOperator{\rank}{rank}
\DeclareMathOperator{\Id}{Id}

\DeclareMathOperator{\dx}{dx}
\DeclareMathOperator{\dy}{dy}
\DeclareMathOperator{\dv}{div}
\DeclareMathOperator{\tr}{tr}

\DeclareMathOperator{\spt}{spt}

\DeclareMathOperator*{\intt}{int}

\DeclareMathOperator{\loc}{loc}
\DeclareMathOperator{\Sym}{Sym}

\DeclareMathOperator{\Lip}{Lip}

\DeclareMathOperator{\dist}{dist}

\theoremstyle{plain}
\newtheorem*{NTeo}{Theorem}

\newtheorem{Teo}{Theorem}[section]
\newtheorem{lemma}[Teo]{Lemma}
\newtheorem{prop}[Teo]{Proposition}

\newtheorem{Cor}[Teo]{Corollary}
\theoremstyle{definition}
\newtheorem{Def}[Teo]{Definition}
\theoremstyle{remark}
\newtheorem{rem}[Teo]{Remark}

\title{Minimal graphs and differential inclusions}

\author[Tione]{Riccardo Tione}
\address[Riccardo Tione]{
\newline \indent Institut f\"ur Mathematik, Universit\"at Z\"urich,
\newline \indent Winterthurerstrasse 190, CH-8057 Zurich, Switzerland}
\email{riccardo.tione@math.uzh.ch}

\begin{document}

\maketitle

\begin{abstract}
In this paper, we study the differential inclusion associated to the minimal surface system for two-dimensional graphs in $\R^{2 + n}$. We prove regularity of $W^{1,2}$ solutions and a compactness result for approximate solutions of this differential inclusion in $W^{1,p}$. Moreover, we make a perturbation argument to infer that for every $R > 0$ there exists $\alpha(R) >0$ such that $R$-Lipschitz stationary points for functionals $\alpha$-close in the $C^2$ norm to the area functional are always regular. We also use a counterexample of \cite{KIRK} to show the existence of irregular critical points to inner variations of the area functional.
\end{abstract}
\par
\medskip\noindent
\textbf{Keywords:} Area functional, differential inclusions, regularity in two dimensions, stationary points, polyconvexity.
\par
\medskip\noindent
{\sc MSC (2010): 35B20 - 35B65 - 49Q05 - 58E12.
\par
}

\section{Introduction}

The history of the study of stationary graphs for the area function (sometimes called minimal graphs) is extremely rich, see \cite[Chapter 6]{MGS2}, \cite[Chapter 11]{GM} and \cite{OSS} for a more geometric approach. Consider $\Omega$ convex open set and a Lipschitz function $u: \Omega \subset \R^k \to \R^n$, such that the graph $(x,u(x))$ is a stationary point for the area function in $\R^{k + n}$. In other words, $u$ solves the following system
\begin{equation}\label{sys0}
\begin{cases}
\displaystyle \sum_{i,j = 1}^k\frac{\partial}{\partial x^i}\left(\sqrt{g}g^{ij}\frac{\partial u^\ell}{\partial x^j}\right) = 0,\; &\ell \in \{1,\dots,n\};\\
\displaystyle \sum_{i = 1}^k\frac{\partial}{\partial x^i}\sqrt{g}g^{ij} = 0,\; &j \in \{1,\dots,k\},
\end{cases}
\end{equation}
where $(g^{ij}) = (g_{ij})^{-1}$, $g_{ij} = (D u_i,D u_j)$ and $g = \det((g_{ij}))$.
In the case $n = k + 1$, the work of many mathematicians such as F. Almgren, E. Bombieri, E. De Giorgi, E. Giusti, H. Jenkins, J. Serrin, J. Simons and others has given a fairly complete understanding of the problem (see \cite{GM,GT,MM}). In the case $n > k + 1$, the problem is more complicated and many properties of solutions of \eqref{sys0} in the codimension-one case fail in the higher-codimension case. In the seminal paper \cite{LOA}, L. B. Lawson and R. Osserman proved that in general the Dirichlet problem associated to \eqref{sys0} has no unique solution and that its solutions are not always stable. Concerning the regularity of such solutions, Lawson and Osserman showed that if $k\ge 4$ and $n + k \ge 7$, then singular Lipschitz stationary graphs exist. Some years later, D. Fischer-Colbrie proved in \cite{DFC} that if $k = 2,3$ then every Lipschitz stationary graph is smooth. For later developments of the theory, see also \cite{GRE,WANG,WAN1}.
\\
\\
\indent Let us consider any polyconvex (i.e. a convex function of the subminors of a matrix $X$) $f:\R^{n\times 2}\to \R$ of class $C^1$ and let us fix a convex, bounded and open set $\Omega \subset \R^2$. The equations of \eqref{sys0} are a particular case of the system that a stationary point for an energy of the form
\[
\E_f(u) \doteq \int_{\Omega}f(D u)\dx, \text{ for } u \in \Lip(\Omega,\R^n),
\]
solves. If $$f(X) = \sqrt{1 + \|X\|^2 + \sum_{1\le a\le b\le n}\det(X^{ab})^2},\; \forall X \in \R^{n\times 2},$$ then $\E_f(\cdot)$ measures the area of the graph $(x,u(x))$. In this case, we will denote $f(\cdot) = \A(\cdot)$, and we will call it the area function. We say that $u \in W^{1,p}$ (for $p \ge 2$) is a stationary point for $f$ if $u$ is a critical point with respect to both outer and inner variations, i.e. if $u$ solves
\begin{equation}\label{sys}
\begin{cases}
\displaystyle \int_{\Omega}\langle(D f)(D u), \; D v\rangle\dx= 0, &\forall v\in C^1_c(\Omega,\mathbb{R}^n),\vspace{0.1cm}\\ 
\displaystyle \int_{\Omega}\langle(D f)(D u), \; D u D \Phi(x)\rangle \dx - \int_{\Omega}f(D u)\dv(\Phi) \dx = 0, &\forall \Phi\in C_c^1(\Omega,\mathbb{R}^2).
\end{cases}
\end{equation}
It can be shown, see \cite{TN}, that the graphs of functions $u$ satisfying the previous system are actually stationary in the sense of varifolds. 
\\
\\
\indent In order to study the solutions of \eqref{sys}, we recast \eqref{sys} as a differential inclusion. First of all, we rewrite \eqref{sys} in its classical form
\begin{equation*}
\begin{cases}
\dv(Df(D u)) = 0,\\
\dv((D u)^TDf(D u) - f(D u)\id) = 0.
\end{cases}
\end{equation*}
Then, using Poincar\'e's Lemma, we infer that $u$ is a solution of system \eqref{sys} if and only if there exist functions $v: \Omega \to \R^n$ and $w:\Omega \to \R^2$ such that the function $$\U = \left(\begin{array}{c}u\\v\\w \end{array}\right): \Omega \to \R^{2n + 2 }$$ fulfills
\begin{equation}\label{diffincF}
D \U(x) \in C_f = \left\{Y\in\R^{(2n+ 2)\times 2}: Y=
\left(
\begin{array}{c}
X\\
Df(X)J\\
X^TDf(X)J-f(X)J
\end{array}
\right)
\right\}, \text{ for a.e. } x \in \Omega,
\end{equation}
where $J \in \R^{2\times 2}$ is a symplectic matrix, i.e. $J^T = -J$, $J^2 = - \id$. From now on, we will use the following notation:
\[
A_f(X) \doteq Df(X)J \text{ and } B_f(X) \doteq X^TDf(X)J-f(X)J.
\]
When $f = \A$, we will simply write $A(X)$, $B(X)$ instead of $A_f(X)$, $B_f(X)$ and $C_\A$ for the set of matrices of \eqref{diffincF}. We remark that $\U$ is a solution of $\eqref{diffincF}$ with $f = \A$ if and only if $u$ solves \eqref{sys0}.
\\
\\
\indent Differential inclusions have been extensively studied in the last years, mainly in connection to the so-called convex integration methods. The underlying idea is to rewrite a system of PDE's as a relation of the form
\begin{equation}\label{inccc}
D u \in K,
\end{equation}
where $K \subset \R^{n\times 2}$ and $u \in W^{1,p}(\Omega;\R^n)$, $p \in [1,+\infty]$ and then studying solutions of the system through properties of $K$. The most natural questions we can ask about \eqref{inccc} are: which properties of $K$ guarantee that
\begin{enumerate}[(i)]
\item\label{compp} any $W^{1,p}$-equibounded sequence $u_n$ for which $\dist(D u_n,K)$ converges weakly to $0$ converges in some stronger topology and up to subsequences to a solution $u$ of $\eqref{inccc}$?
\item\label{regg} are solutions of \eqref{inccc} more regular than merely $W^{1,p}$?
\end{enumerate}
We will refer to \eqref{compp} as "compactness for approximate solutions of the differential inclusion" and to \eqref{regg} as "regularity for the differential inclusion". A necessary condition to get \eqref{compp} is that $K$ does not contain rank-one connections, i.e. $\rank(A-B) = 2, \forall A,B \in K, A \neq B$ (see \cite[Chapter 1]{KIRK}). This condition is also sufficient when $n = 2$ and $K$ is connected (as proved in \cite{SAP}), but for $n > 2$ it is not. This fact is exploited in \cite{SMVS,LSP}, where, using convex integration methods, S. M\"uller \& V. \v Sver\'ak and L. Sz{\'{e}}kelyhidi found striking counterexamples to both \eqref{compp} and \eqref{regg} in the case
\begin{equation}\label{counter}
K = 
\left\{Y\in\R^{4\times 2}: Y=
\left(
\begin{array}{c}
X\\
Df(X)J
\end{array}
\right)
\right\},
\end{equation}
$f:\R^{2\times 2} \to \R$ being a quasiconvex function (in \cite{SMVS}) or a polyconvex function (in \cite{LSP}). For the definition of quasiconvex function, we refer the reader to \cite{SMVS}. On the other hand, it is possible to prove (partial) regularity for minimizers of quasiconvex energies, see for instance \cite{EVA,KRI}. Notice that every stationary point is in particular a critical point for outer variations and every minimizer is a stationary point. Therefore, an important open question in the field of vectorial Calculus of Variations is whether for general polyconvex (or quasiconvex) functions \eqref{diffincF} admits non-regular solutions as in \cite{SMVS,LSP} or if \eqref{compp} and \eqref{regg} hold (or, without using the language of differential inclusions, if one can prove partial regularity for stationary points as in \cite{EVA,KRI}). 
\\
\\
\indent The results of this paper are in the opposite direction than the ones of \cite{SMVS,LSP}, in the sense that we prove \eqref{compp} and \eqref{regg} for particular cases of \eqref{diffincF}. Our research was driven by V. \v Sver\'ak's results in \cite{SAP}. In that article, the author studied differential inclusions
\[
D u \in K,
\]
where $K$ is a connected, compact subset of $\R^{2\times 2}$, and $u: \Omega \to \R^2$ a Lipschitz function. Roughly speaking, in order to gain \eqref{compp} (see \cite[Theorem 1]{SAP}), it is sufficient for $K$ to fulfill:
\[
c\det(X - Y) > 0,\; \forall X \neq Y, X, Y\in K,
\]
for some $c \in \R\setminus\{0\}$.
To guarantee \eqref{regg} (for instance, $D u \in C^{0,\alpha}$ for some $\alpha > 0$), a stronger inequality is needed:
\[
c\det(X - Y) \ge \|X - Y\|^2, \;\forall X,Y \in K,
\] 
for some $c \in \R\setminus\{0\}$ (see \cite[Theorem 3]{SAP}).
In view of these results, it becomes clear that in order to guarantee \eqref{compp} and \eqref{regg} one needs to carefully study the signs of the subdeterminants of matrices in $K$. Similar considerations were also made in \cite{LASLOC, FAR,LSR}.\\
\\
\indent Let us outline the structure of the paper and state our main results, in order to clarify how we use \v Sver\'ak's ideas to obtain information on our differential inclusion. In Section \ref{aarea}, we write explicitely $C_\A$ and prove basic growth estimates of $\A$. In Section $\ref{BBB}$, we show the smoothness of the solutions of the differential inclusion
\[
D w \in \{Y \in \R^{2\times 2}: Y = B(X), X \in \R^{n\times 2}\}
\]
through  Monge-Amp\`ere equation regularity result. Section \ref{BOUNDS} is devoted to the proof of Theorem $\ref{MAIN}$. This result, combined with the Monge-Amp\`ere results proved in the previous section, gives us the necessary information on the signs of the subdeterminants of the differential inclusion. Indeed in Section \ref{COMP} we prove the first of our main results:
\begin{NTeo}[Compactness of the differential inclusion]
Suppose $\U_n: \Omega \to \R^{2n + 2}$ is an equibounded sequence in $W^{1,p}(\Omega;\R^{2n+ 2})$ for $p > 2$. If
\begin{equation*}
\int_{\Omega}\dist(D\U_n(x),C_\A)\eta(x) \to 0,\; \forall \eta \in C^\infty_c(\Omega),
\end{equation*}
then, up to a (non-relabeled) subsequence, $\U_n$ converges strongly in $W^{1,\bar p}$ to a function $\U: \Omega \to \R^{2n + 2}$, for every $1\le \bar p < p$. Moreover, $D \U(x) \in C_\A$ for a.e. $x \in \Omega$.
\end{NTeo}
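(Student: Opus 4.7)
The plan is to follow the compensated-compactness strategy of \v Sver\'ak~\cite{SAP}, using Theorem~\ref{MAIN} as the key quantitative input on $C_\A$. First, since $\{\U_n\}$ is equibounded in $W^{1,p}$ with $p>2$, reflexivity yields a (non-relabelled) subsequence with $\U_n \rightharpoonup \U$ in $W^{1,p}$, and Rellich--Kondrachov gives strong convergence in every $L^q_{\loc}$ with $q<\infty$. Because $p>2$, each $2\times 2$ minor of $D\U_n$ is bounded in $L^{p/2}$, and by the classical weak continuity of Jacobians in super-critical Sobolev spaces, every such minor converges weakly in $L^{p/2}_{\loc}$ to the corresponding minor of $D\U$. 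Equivalently, every $2\times 2$ subdeterminant of $D\U_n - D\U$ tends weakly to $0$.

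The second step is to deploy the estimate on $C_\A$ coming from Theorem~\ref{MAIN}, together with the Monge--Amp\`ere regularity of Section~\ref{BBB}. In line with \v Sver\'ak's framework in~\cite{SAP}, the expected form is a quadratic inequality
\[
\|Y_1 - Y_2\|^2 \leq c\, \Delta(Y_1 - Y_2), \qquad \forall\, Y_1, Y_2 \in C_\A,
\]
where $\Delta$ is a linear combination of $2\times 2$ subdeterminants of the $(2n+2)\times 2$ matrix argument, hence a null Lagrangian. Since the inclusion is only approximate, I will work with a measurable selection $P_n(x) \in C_\A$ realising $\dist(D\U_n(x), C_\A)$: by hypothesis $P_n - D\U_n \to 0$ in $L^1_{\loc}$, and a standard equi-integrability argument based on the $L^p$-bound for $D\U_n$ upgrades this to strong $L^{p/2}_{\loc}$ convergence.

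Applying the estimate pointwise to $(P_n, P_m)$ and integrating against $\eta \in C_c^\infty(\Omega)$ gives
\[
\int_\Omega \|P_n - P_m\|^2 \eta \, \dx \leq c \int_\Omega \Delta(P_n - P_m)\, \eta\, \dx \longrightarrow 0,
\]
because $P_n = D\U_n + o_{L^{p/2}}(1)$ and $\Delta$ is a null Lagrangian, so its value on $P_n - P_m$ differs from that on $D\U_n - D\U_m$ by a strongly vanishing error, while the latter tends to $\Delta(D\U - D\U) = 0$ by step one. Hence $\{P_n\}$ is Cauchy in $L^2_{\loc}$; combined with $P_n - D\U_n \to 0$ this yields $D\U_n \to D\U$ strongly in $L^2_{\loc}$, and by interpolation with the $L^p$-bound, in $L^{\bar p}_{\loc}$ for every $\bar p < p$. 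Finally, extracting an a.e.\ convergent subsequence and using $\dist(D\U_n(x), C_\A) \to 0$ a.e.\ together with the closedness of $C_\A$ gives $D\U(x) \in C_\A$ for a.e.\ $x \in \Omega$.

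The decisive ingredient, and the main obstacle, is the quadratic estimate on $C_\A$ extracted from Theorem~\ref{MAIN} and the Monge--Amp\`ere regularity; everything else is the by-now-standard \v Sver\'ak compensated-compactness routine. A secondary subtlety is the passage from approximate to exact inclusion: the projection $P_n$ must be chosen measurable and close enough in $L^{p/2}$ for the weak-limit identification of the null Lagrangian on $P_n - P_m$ with that on $D\U_n - D\U_m$ to be valid.
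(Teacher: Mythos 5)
There is a genuine gap, in two places. First, the coercivity input you postulate --- a single inequality $\|Y_1-Y_2\|^2\le c\,\Delta(Y_1-Y_2)$ with a fixed null Lagrangian $\Delta$ and a uniform constant valid for \emph{all} $Y_1,Y_2\in C_\A$ --- is not what Theorem \ref{MAIN} gives and is not available here. Inequality \eqref{sotto} has constants depending on the bound $k$ on $\|B(X)\|,\|B(Y)\|$, and it contains the extra term $C\|B(X)-B(Y)\|\min\{\|X\|,\|Y\|\}\|X-Y\|$, which is not a null Lagrangian; it can be traded for $-\lambda\det(B(X)-B(Y))$ via Proposition \ref{alg} only on a fixed ball (that is exactly Lemma \ref{pertar}, used in the Lipschitz perturbative theorem). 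Since in the compactness statement $D\U_n$ is only bounded in $L^p$, not in $L^\infty$, no single constant $c$ works at a.e.\ point, and Monge--Amp\`ere regularity from Section \ref{BBB} plays no role in this argument. The paper's proof is organized in two steps precisely to circumvent this: at a.e.\ fixed $x$ it first uses only the minor of the bottom $2\times 2$ block together with \eqref{algg} to force the $B$-component to be constant on the support of the Young measure $\nu_x$; only then does it invoke \eqref{sotto}, where the offending middle term vanishes identically because the $B$-parts coincide, and where the constant $\delta=\delta(x)$ is allowed to depend on $x$. This two-step localization is missing from your scheme.

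Second, even granting such an estimate, the Cauchy argument does not close: the claim that $\int_\Omega\Delta(D\U_n-D\U_m)\,\eta\,\dx\to 0$ as $n,m\to\infty$ does not follow from weak continuity of minors. Expanding with \eqref{sumdet}, the cross term $\langle (D\U_n)^{ab},\cof^T((D\U_m)^{ab})\rangle$ is a product of two weakly convergent sequences with \emph{independent} indices; weak convergence of each factor controls the iterated limits but not the joint limit, so the expression need not vanish as $n,m\to\infty$. Replacing $D\U_m$ by the weak limit $D\U$ would fix the bilinear term, but then applying the coercive inequality requires knowing $D\U(x)\in C_\A$ (or comparing $P_n(x)$ with a projection of $D\U(x)$ onto $C_\A$), which is part of what must be proved. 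This self-interaction of the sequence is exactly what the Young-measure identity (ii) in the paper's proof supplies: since the integrand is separately linear in each argument, integration against the product measure $\nu_x\otimes\nu_x$ yields $\int\!\!\int\det((\Lambda-\Gamma)^{ab})\,d(\nu_x\otimes\nu_x)=0$, after which the pointwise two-step argument gives $\nu_x=\delta_{D\U(x)}$ and strong convergence via Corollary \ref{strongc}. Your peripheral steps (measurable selection $P_n$, upgrading $\dist(D\U_n,C_\A)\to 0$ to strong local convergence, interpolation, closedness of $C_\A$) are fine, but the core of the proof as written does not go through without the Young-measure (or an equivalent div--curl) formulation and without the preliminary rigidity of the $B$-block.
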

Finally, in Section \ref{PERT}, we prove a perturbation argument, through which we establish regularity for Lipschitz solutions of more general energies than the one induced by the area:
\begin{NTeo}
For every $R > 0$, there exists $\alpha = \alpha(R) > 0$ such that, if $f$ is a $C^k(\R^{{(2n + 2)}\times 2})$ function, $k \ge 2$, with the property that
\begin{equation*}
\|f - \A\|_{C^2(B_{2R}(0))} \le \alpha,
\end{equation*}
and $\U: \Omega \to \R^{2n+ 2}$ is a Lipschitz solution of
\begin{equation*}
D \U(x) \in C_f, \text{ for a.e. }x\in \Omega
\end{equation*}
with
\[
\|D \U\|_{\infty} \le R,
\]
then
$\U \in C^{k-1,\rho}(\Omega)$, for some positive $\rho < 1$.
\end{NTeo}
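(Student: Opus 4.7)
The plan is to combine a perturbation argument with the quantitative information on $C_\A$ provided by Theorem \ref{MAIN} and the Monge--Amp\`ere analysis of Section \ref{BBB} to first upgrade the Lipschitz bound to $D\U \in C^{0,\rho}$, and then to bootstrap to $C^{k-1,\rho}$ via classical Schauder estimates.

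The first, and key, step is to transfer the quantitative subdeterminant inequalities on $C_\A$ to $C_f$. The parametrizing map $X \mapsto (X,\, Df(X)J,\, X^T Df(X)J - f(X)J)$ of $C_f$ is close in $C^1(B_R)$ to its $\A$-counterpart whenever $\|f - \A\|_{C^2(B_{2R})} \le \alpha$. Since Theorem \ref{MAIN} yields strict quantitative inequalities on the signs of the relevant subdeterminants of differences of matrices in $C_\A \cap \{|X| \le R\}$, by continuity these same inequalities persist, with at worst a small loss in the constants, on $C_f \cap \{|X| \le R\}$ once $\alpha = \alpha(R)$ is chosen small enough.

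From this I would deduce $\U \in C^{1,\rho}$. The perturbed inequality reinstates the hypotheses of the Monge--Amp\`ere framework of Section \ref{BBB} for the $w$-component of $\U$, which accordingly satisfies a uniformly elliptic Monge--Amp\`ere-type equation and is thus $C^{1,\rho}$. The defining relations of $C_f$ then express $Du$ and $Dv$ as $C^{0,\rho}$ functions of $Dw$, provided the perturbed map $X \mapsto B_f(X)$ remains a diffeomorphism on $\{|X|\le R\}$, which again follows from the smallness of $\alpha$. Hence $\U \in C^{1,\rho}$.

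Once $D\U$ is H\"older continuous, the Euler--Lagrange equation $\dv(Df(Du)) = 0$ becomes a quasilinear uniformly elliptic system with $C^{k-1}$ coefficients, ellipticity being inherited from the strict rank-one convexity of $\A$ on $B_{2R}$ for $\alpha$ small. A standard Schauder bootstrap then gives $u \in C^{k,\rho}$, and the $C_f$-relations upgrade this to $\U \in C^{k-1,\rho}$. The main obstacle is the perturbation step: one must identify the precise algebraic form of the inequality of Theorem \ref{MAIN} that is genuinely $C^2$-stable in $f$, and verify that this stability is uniform in $X$ on the compact set $\{|X|\le R\}$; the rest of the argument is then a combination of the Monge--Amp\`ere regularity of Section \ref{BBB} and classical Schauder theory.
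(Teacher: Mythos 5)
Your first and last steps are broadly in line with the paper (a $C^2$-stability argument for the quantitative inequality of Theorem \ref{MAIN}, cf.\ Lemmas \ref{pertar} and \ref{pertlem}, and a Legendre--Hadamard/Schauder-type conclusion, cf.\ Lemmas \ref{areaLH}, \ref{genLH}, \ref{EXT} and Morrey's theorem), but the middle step --- where the actual gain of regularity is supposed to happen --- does not work. The Monge--Amp\`ere framework of Section \ref{BBB} is rigidly tied to the exact area functional: it uses that $\tr B(X)=0$ (so that $w$ is divergence-free and admits a potential $z$ with $Dw$ expressed through $D^2z$) and that $\det B(X)\equiv 1$, giving $\det D^2z=1$. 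For a perturbed $f$ neither property holds, not even approximately in a usable form: $B_f$ need not be trace-free, so there is no potential $z$ and no Monge--Amp\`ere equation for the $w$-component at all. Worse, even granting some regularity of $w$, your claim that the relations defining $C_f$ ``express $Du$ and $Dv$ as $C^{0,\rho}$ functions of $Dw$, provided $X\mapsto B_f(X)$ remains a diffeomorphism'' is impossible for $n\ge 2$: $B_f$ maps $\R^{n\times 2}$ (dimension $2n$) into $2\times 2$ matrices, so it is never injective, and $Du$ cannot be recovered from $Dw$. (Even for $f=\A$, the paper's Proposition \ref{secondreg} does not invert $B$; it exploits the special identity $D\A(X)_{j1}=\beta(X)x_{j1}-\gamma(X)x_{j2}$, $D\A(X)_{j2}=\alpha(X)x_{j2}-\gamma(X)x_{j1}$, i.e.\ that $D\A(X)$ is linear in $X$ with coefficients depending only on $B(X)$ --- a structure that does not survive perturbation of the integrand.) So your proposal never actually produces the initial regularity improvement for $\U$.

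The paper fills exactly this hole differently: the perturbed inequality of Lemma \ref{pertlem} bounds $\|X-Y\|^2$ by a fixed linear combination of $2\times 2$ subdeterminants of differences of matrices in $C_f$ (note $-\langle (A_f(X)-A_f(Y))J,X-Y\rangle$ and $\det(B_f(X)-B_f(Y))$ are such subdeterminants), and this is fed into a \v Sver\'ak-type argument (Proposition \ref{C1gamma}): testing the null-Lagrangian identity for determinants with difference quotients gives a Caccioppoli inequality, hence $u\in W^{2,2}_{\loc}$, and a Sobolev--Poincar\'e estimate plus Gehring's lemma upgrade this to $W^{2,2+\rho}_{\loc}$; only then does the Legendre--Hadamard condition (made global by the extension Lemma \ref{EXT}) and Morrey's theorem yield $C^{k-1,\rho}$. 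Two smaller remarks: your ``by continuity these inequalities persist'' needs care, since both sides of \eqref{sotto} degenerate quadratically on the diagonal $X=Y$ --- the paper handles this by a blow-up/compactness argument in Lemma \ref{pertlem} (a direct estimate using $C^2$-closeness to control $A_f-A$ and $B_f-B$ to first order would also do, but it must be said); and the passage ``uniformly elliptic quasilinear system, hence Schauder'' should be formulated via the Legendre--Hadamard condition for the system, as genuine uniform ellipticity in the scalar sense is not available here.
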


We end this paper with Section \ref{IRREG}, where we show the following:

\begin{NTeo}
Let $\Omega$ be an open and bounded subset of $\R^2$. There exists a map $\psi \in W^{1,p}(\Omega,\R^2)$ for some $p > 2$ that solves
\[
\curl(B(D\psi)) = 0,
\]
and such that for every open $\mathcal{V} \subset \Omega$, $\psi$ is not $C^1(\mathcal{V})$.
\end{NTeo}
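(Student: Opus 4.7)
The plan is to reformulate the PDE as a differential inclusion and then invoke a convex-integration construction of \cite{KIRK}. After replacing $\Omega$ with a simply connected subdomain (harmless since the claim is local and Lipschitz maps on balls patch into $W^{1,p}$ maps on $\Omega$), Poincar\'e's lemma converts $\curl(B(D\psi))=0$ into the existence of $w\in W^{1,p}(\Omega;\R^2)$ with $Dw=B(D\psi)$ almost everywhere. Setting $\Psi=(\psi,w):\Omega\to\R^4$, the theorem reduces to producing, for some $p>2$, a $W^{1,p}$ map satisfying
\[
D\Psi(x)\in K\doteq\{(X,B(X)):X\in\R^{2\times 2}\}\subset\R^{4\times 2}\quad\text{a.e. in }\Omega,
\]
whose first component fails to be $C^1$ on every open subset of $\Omega$.

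The heart of the argument is to identify a Kirchheim-type configuration inside $K$: a finite family $X_1,\ldots,X_N\in\R^{2\times 2}$ for which the lifted points $(X_i,B(X_i))\in\R^{4\times 2}$ form a rank-one free $T_N$ configuration of the kind exploited in \cite{KIRK}. The sharp contrast with Sections \ref{BOUNDS}--\ref{COMP} is instructive: the compactness theorem excludes any rank-one connections inside the full inclusion set $C_\A$, which records both the outer-variation component $D\A(X)J$ and $B(X)$. The set $K$ omits the outer-variation component, and this missing constraint is precisely what opens up the rank-one geometry needed to support wild solutions. With such a configuration at hand, Kirchheim's machinery---convex integration in-approximation, or Baire category on a suitable class of subsolutions---delivers a $W^{1,p}$ solution $\Psi$ of $D\Psi\in K$ whose gradient is nowhere locally constant, and the nowhere-$C^1$ property of the first component $\psi$ then follows by projection.

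The principal obstacle is the explicit exhibition of the $T_N$ configuration inside $K$: the nonlinear section $X\mapsto(X,B(X))$ can destroy the rank-one free property that a $T_N$ chosen only in $\R^{2\times 2}$ might enjoy, because rank-one compatibility must now be verified simultaneously in both components. My approach would be to compute $B$ on a tractable symmetric subfamily of $2\times 2$ matrices (for instance diagonal or conformal ones), exploit the $O(2)$-invariances of $\A$ to reduce the unknowns, and then solve by hand the algebraic constraints defining a $T_N$ in $\R^{4\times 2}$. The smoothness of $B$ together with the many free parameters makes this combinatorial-algebraic problem amenable; once such a configuration is produced, the remainder is a standard application of \cite{KIRK}, giving the required $\psi\in W^{1,p}(\Omega;\R^2)$ for some $p>2$ that is not $C^1$ on any open $\mathcal V\subset\Omega$.
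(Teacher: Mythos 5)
Your reduction via Poincar\'e's lemma and your diagnosis that dropping the outer-variation component is what reopens the rank-one geometry are both correct, but the proof has a genuine gap exactly where you flag it: you never exhibit the rank-one structure inside the lifted set $\{(X,B(X)):X\in\R^{2\times 2}\}$, and everything downstream depends on it. Moreover, even granting such a configuration, the remainder is not ``a standard application'' of \cite{KIRK}: a $T_N$ configuration by itself yields approximate solutions or solutions with gradients in an in-approximation of the set, not exact solutions supported on finitely many rank-one-free matrices; by the rigidity of the four-gradient problem (\cite[Theorem 4.33]{KIRK}) any such exact wild solution needs at least five matrices, and one must either reproduce the special Kirchheim--Preiss five-point construction inside your lifted set or build in-approximations of the full four-dimensional manifold $K\subset\R^{4\times 2}$, which is a substantial piece of work in the spirit of \cite{SMVS,LSP} rather than a routine citation. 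Finally, ``gradient nowhere locally constant, hence $\psi$ nowhere $C^1$ by projection'' needs the gradients to lie a.e.\ in a finite set together with a positive-measure/nowhere-affine statement on \emph{every} open subset; this quantitative ``everywhere wild'' property has to be engineered into the construction (it is not automatic from convex integration) and is missing from your outline.

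The paper's route bypasses the lifted $T_N$ problem entirely. Lemma \ref{algebra} shows that $B$ is \emph{constant} (equal to a multiple of $J$) on the union $H_1\cup H_2$ of anticonformal and conformal matrices, so any map with $D\psi\in H_1\cup H_2$ a.e.\ satisfies $\curl(B(D\psi))=0$ trivially -- no auxiliary potential or rank-one analysis of the lifted set is needed. The wild map is then imported from \cite[Example 4.41]{KIRK}: a Lipschitz map with exactly five (symmetric, rank-one--free) gradients, built by Baire category so that each gradient appears with positive measure in every ball (Lemma \ref{SUFF}), is composed with a quasiregular homeomorphism solving a Beltrami equation; the composition $\psi$ lies in $W^{1,p}$ for some $p>2$ (this, not Lipschitz regularity, is where the exponent comes from), has $D\psi\in H_1\cup H_2$ a.e., and on every open set has $D\psi=0$ on a set of positive measure while being nonconstant. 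The nowhere-$C^1$ conclusion then follows not by a finite-gradient-set argument but from harmonic function theory: a $C^1$ patch would force $D\psi$ into $H_2$ on some ball, making $\psi$ harmonic there, incompatible with a vanishing gradient on a set of positive measure. If you want to salvage your plan, the observation you are missing is precisely the constancy of $B$ on $H_1\cup H_2$, which makes your ``principal obstacle'' disappear; but you would still need the full strength of Kirchheim's five-gradient construction, not a generic $T_N$, to get the everywhere-irregular behavior.
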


This result shows that inner variations are in general not sufficient to guarantee any kind of regularity of solutions. The proof is based on a simple linear algebra lemma, Lemma \ref{algebra}. This is combined with the counterexample built by B. Kirchheim in \cite[Example 4.41]{KIRK}, that in turn is based on the construction of a Lipschitz non-affine map attaining exactly five matrices as gradients. There are various results in the literature concerning critical points for inner variations, and in particular showing that they are not sufficient to yield continuity of the first derivatives of solutions. In \cite{IKO}, the authors show that critical points to the inner variation equation  with positive determinant for a certain class of energies are always Lipschitz, but need not be $C^1$. In \cite{IMS,EME}, the authors construct weak singular solutions to the inner variation equations lying in the Sobolev space $W^{1,p}$, $1\le p < n$, where $n$ is the dimension of the domain. To the best of our knowledge, the aforementioned result is new for the area functional, and has been included in the paper as part of the analysis of the regularity properties of solutions to the differential inclusion associated to the area.

\subsection*{Aknowledgements}

I would like to thank my advisor, Camillo De Lellis, for posing this question and for helpful discussions. I would also like to thank Guido De Philippis, for his interest in this problem and for his suggestions, Antonio De Rosa and Maria Strazzullo, for helping me check some computations, and Yash Jhaveri, for discussions about Monge-Amp\`ere equation.

\section{The area functional}\label{aarea}

In this section we rewrite the partial differential system defining a stationary graph for the area functional as a differential inclusion. Let us consider the area functional on graphs $\mathcal{A}:\R^{n\times 2} \to \R$:
\[
\A(X) = \sqrt{1 + \|X\|^2 + \sum_{1\le a\le b\le n}\det(X^{ab})^2},
\]
where $X^{ab}$ is the $2\times 2$ submatrix obtained from $X$ considering just the $a$-th and the $b$-th rows. Denote with $\cof(M)$ the matrix for which $M\cof(M) = \det(M)\id_2$,for every $M \in \R^{2\times 2}$. Explicitely, if $$M = \left(\begin{array}{cc}a & b\\ c & d\end{array}\right),$$ then $$\cof(M) = \left(\begin{array}{cc}d & -b\\ -c & a\end{array}\right).$$ We compute
\begin{equation}\label{DD}
D\A(X) = \frac{X + \sum_{1\le a \le b \le n}\det(X^{ab})C_{ab}(X)}{\A(X)},
\end{equation}
where $C_{ab}(X)$ denotes the $n\times 2$ matrix defined as 
\begin{equation*}
(C_{ab}(X))_{ij} =
\begin{cases}
0, \text{ if } i\neq a \text{ or } i \neq b\\
(\cof(X^{ab})^T)_{ij}, \text{ otherwise.}
\end{cases}
\end{equation*}
From \eqref{DD}, it also follows that
\[
X^TD\A(X) - \A(X)\Id = \frac{X^TX + \sum_{a\le b}\det(X^{ab})^2\Id - \A^2(X)\Id}{\A(X)} =  \frac{X^TX - (1 + \|X\|^2)\Id}{\A(X)}.
\]
Let us fix the symplectic matrix to be
\[
J\doteq
\left(
\begin{array}{cc}
0 & 1\\
-1 & 0
\end{array}
\right).
\]
We will study the following particular case of $\eqref{diffincF}$:
\begin{equation}\label{areainc}
D\mathcal{U}(x) \in C_\A =
\left(
\begin{array}{cc}
X\\ 
A(X)\\
B(X)
\end{array}
\right), \text{ for a.e. }x\in \Omega
\end{equation}
where $\mathcal{U}: \Omega \to \R^{2n + 2}$ is a function in a Sobolev space (its regularity will be discussed at the end of this section). We will always use the following notation for a map $\U$ with the property \eqref{areainc}:
\[
\U =
\left(
\begin{array}{c}
u\\
v\\
w
\end{array}
\right),\;
u,v:\Omega\to \R^n,w:\Omega\to\R^2,
\]
so that $\U$ satisfies \eqref{areainc} if and only if $D v(x) = A(D u(x))$ and $D w(x) = B(D u(x))$ for a.e. $x \in \Omega$. Let us make some preliminary computations that we will need in the paper. Namely:
\begin{lemma}\label{basicest}
The following hold
\begin{enumerate}
\item $\|A(X)\| \le 2\|X\|$;\label{AEST}
\item $\frac{1 + \|X\|^2}{2\A(X)}\le\|B(X)\|\le 2(1 +\|X\|)$;\label{BEST}
\end{enumerate}
\end{lemma}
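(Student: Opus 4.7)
The common framework is to diagonalize $G \doteq X^T X \in \R^{2 \times 2}$, whose eigenvalues I denote $\lambda_1, \lambda_2 \ge 0$. The Cauchy--Binet formula yields $\det G = \sum_{a<b} \det(X^{ab})^2$, hence
\[
\A(X)^2 = 1 + \tr G + \det G = (1+\lambda_1)(1+\lambda_2).
\]
Moreover $J$ is orthogonal, so right multiplication by $J$ preserves the Frobenius norm: $\|A(X)\| = \|D\A(X)\|$ and $\|B(X)\| = \|X^T D\A(X) - \A(X)\Id\|$. Both estimates thereby reduce to scalar inequalities in $\lambda_1, \lambda_2$.

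For (1), I would first repackage \eqref{DD} into the compact form $D\A(X) = X(\Id + \cof G)/\A(X)$, which amounts to observing $\sum_{a<b} \det(X^{ab}) C_{ab}(X) = X \cof G$ (a consequence of $\nabla \det G = 2 X \cof G$). Since $G$ and $\cof G = \tr(G)\Id - G$ share eigenvectors in the $2\times 2$ symmetric setting, the SVD $X = U \Sigma V^T$ with $\Sigma = \mathrm{diag}(\sqrt{\lambda_1},\sqrt{\lambda_2})$ gives
\[
\|D\A(X)\|^2 = \frac{\lambda_1(1+\lambda_2)^2 + \lambda_2(1+\lambda_1)^2}{(1+\lambda_1)(1+\lambda_2)} = 2 + \lambda_1 + \lambda_2 - \left(\frac{1+\lambda_1}{1+\lambda_2} + \frac{1+\lambda_2}{1+\lambda_1}\right).
\]
The parenthesized sum is $\ge 2$ by AM--GM, so $\|A(X)\|^2 \le \|X\|^2$, which is in fact stronger than the claim.

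For (2), I would use the identity $X^T D\A(X) - \A(X)\Id = (G - (1+\|X\|^2)\Id)/\A(X)$ already established in the excerpt. The matrix $G - (1+\|X\|^2)\Id$ has eigenvalues $-(1+\lambda_1)$ and $-(1+\lambda_2)$, giving
\[
\|B(X)\|^2 = \frac{(1+\lambda_1)^2 + (1+\lambda_2)^2}{(1+\lambda_1)(1+\lambda_2)} = \frac{1+\lambda_1}{1+\lambda_2} + \frac{1+\lambda_2}{1+\lambda_1}.
\]
The lower bound follows from $a^2 + b^2 \ge (a+b)^2/2$ applied with $a = 1+\lambda_1$, $b = 1+\lambda_2$, since $a + b = 2 + \|X\|^2 \ge 1 + \|X\|^2$. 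For the upper bound, under the constraint $\lambda_1 + \lambda_2 = \|X\|^2$ the expression is maximized at a boundary eigenvalue equal to $0$, yielding $\|B(X)\|^2 \le (1+\|X\|^2) + 1/(1+\|X\|^2) \le 2 + \|X\|^2 \le 4(1+\|X\|)^2$. No conceptual obstacle arises: the one subtlety is spotting that Cauchy--Binet makes $\A^2$ factor cleanly as $(1+\lambda_1)(1+\lambda_2)$, after which the rest is elementary algebra.
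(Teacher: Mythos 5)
Your argument is correct, but it takes a genuinely different route from the paper's. The paper works directly with the explicit formulas: for (1) it bounds $\|X+\sum_{a\le b}\det(X^{ab})C_{ab}(X)\|^2$ by $2\bigl(\|X\|^2+\sum_{a\le b}\det(X^{ab})^2\|X^{ab}\|^2\bigr)$ and uses Cauchy--Binet only to absorb the determinant terms into $\A(X)^2$; for (2) it expands $\|X^TX-(1+\|X\|^2)\Id\|^2$ and invokes the crude two-sided bound $\tfrac14\|X\|^4\le\|X^TX\|^2\le 4\|X\|^4$. You instead diagonalize $G=X^TX$, note via Cauchy--Binet that $\A(X)^2=(1+\lambda_1)(1+\lambda_2)$, rewrite $D\A(X)=X(\Id+\cof G)/\A(X)$ (your identity $\sum_{a\le b}\det(X^{ab})C_{ab}(X)=X\cof G$ is indeed the gradient of the Cauchy--Binet identity, and is consistent with the $\alpha,\beta,\gamma$ formulas the paper derives later in the proof of Theorem \ref{MAIN}), and then compute $\|A(X)\|^2$ and $\|B(X)\|^2$ exactly in terms of the singular values, using that right multiplication by the orthogonal matrix $J$ preserves the Frobenius norm. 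This costs a bit more linear algebra but buys exact expressions and sharper constants: $\|A(X)\|\le\|X\|$ (with equality when $\lambda_1=\lambda_2$) versus the $\sqrt{2}\,\|X\|$ implicit in the paper, and the identity $\|B(X)\|^2=\frac{1+\lambda_1}{1+\lambda_2}+\frac{1+\lambda_2}{1+\lambda_1}$, from which both inequalities in (2) drop out a fortiori. Your boundary-maximization step is sound: with $a=1+\lambda_1$, $b=1+\lambda_2$ and $a+b$ fixed, $a/b+b/a=(a+b)^2/(ab)-2$ decreases in $ab$, which under $\lambda_i\ge 0$ is minimized when one eigenvalue vanishes, giving $\|B(X)\|^2\le (1+\|X\|^2)+\frac{1}{1+\|X\|^2}\le 2+\|X\|^2\le 4(1+\|X\|)^2$.
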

\begin{proof}
In this proof, we will make use of the Cauchy-Binet Theorem (see \cite[Proposition 2.69]{AFP}), that asserts the identity $\sum_{1\le a \le b \le n}\det(X^{ab})^2 = \det(X^TX)$. To prove $\eqref{AEST}$, we write
\begin{align*}
\|A(X)\|^2 &= \frac{\|X + \sum_{1\le a \le b \le n}\det(X^{ab})C_{ab}^T(X)\|^2}{(1 + \|X\|^2 + \det(X^TX)}\\
& \le 2 \frac{\|X\|^2 + \sum_{1\le a \le b \le n}\det(X^{ab})^2\|C_{ab}^T(X)\|^2}{1 + \|X\|^2 + \det(X^TX)}\\
&\le 2 \frac{\|X\|^2 + \sum_{1\le a \le b \le n}\det(X^{ab})^2\|X^{ab}\|^2}{1 + \|X\|^2 + \det(X^TX)}\\
&\le 2\|X\|^2 \frac{1 + \sum_{1\le a \le b \le n}\det(X^{ab})^2}{1 + \|X\|^2 + \det(X^TX)}\\
&= 2\|X\|^2 \frac{1 + \det(X^TX)}{1 + \|X\|^2 + \det(X^TX)} < 2\|X\|^2.
\end{align*}
To prove $\eqref{BEST}$, we again write
\begin{align*}
\|B(X)\|^2 &= \frac{\|X^TX - (1 + \|X\|^2)\id_2\|^2}{1 + \|X\|^2 + \det(X^TX)}\\
& = \frac{\|X^TX\|^2 + 2(1 + \|X\|^2)^2 -2\|X\|^2 - 2\|X\|^4}{1 + \|X\|^2 + \det(X^TX)}\\
& = \frac{\|X^TX\|^2 + 2 + 2\|X\|^2}{1 + \|X\|^2 + \det(X^TX)}.
\end{align*}
It is easy to see that
\[
\frac{1}{4}\|X\|^4 \le \|X^TX\|^2 \le 4\|X\|^4.
\]
Therefore, we get the estimates
\[
\frac{4^{-1}\|X\|^4 + 2 + 2\|X\|^2}{1 + \|X\|^2 + \det(X^TX)}\le\|B(X)\|^2 \le \frac{4\|X\|^4 + 2 + 2\|X\|^2}{1 + \|X\|^2 + \det(X^TX)},
\]
and we deduce that
\[
\frac{1}{4}\frac{\|X\|^4 + 1 + 2\|X\|^2}{1 + \|X\|^2 + \det(X^TX)}\le\|B(X)\|^2 \le 4\frac{\|X\|^4 + 1 + 2\|X\|^2}{1 + \|X\|^2 + \det(X^TX)}.
\]
Using the fact that $\det(X^TX) \ge 0$, rewriting $\|X\|^4 + 1 + 2\|X\|^2 = (1 + \|X\|^2)^2$, and taking the square root of the terms of the inequalities, we get
\[
\frac{1 + \|X\|^2}{2\A(X)}\le\|B(X)\| \le \sqrt{4}\frac{ 1 + \|X\|^2}{\sqrt{1 + \|X\|^2}} = 2\sqrt{1 + \|X\|^2}.
\]
Hence also the second estimate is proven.
\end{proof}
With the previous lemma, we immediately get
\begin{Cor}
For any $p\ge 1$, if $u \in W^{1,p}(\Omega)$, and $\U$ satisfies \eqref{areainc}, then $\U \in W^{1,p}(\Omega)$.
\end{Cor}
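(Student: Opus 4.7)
The plan is to deduce that $\U \in W^{1,p}(\Omega)$ directly from the pointwise bounds of Lemma \ref{basicest}. Writing $\U = (u,v,w)$, we are given that $u \in W^{1,p}(\Omega)$ and that $Dv(x) = A(Du(x))$ and $Dw(x) = B(Du(x))$ a.e. in $\Omega$. The estimate \eqref{AEST} yields $\|Dv(x)\| \le 2\|Du(x)\|$ pointwise, so $\|Dv\|_{L^p(\Omega)} \le 2\|Du\|_{L^p(\Omega)} < \infty$. Similarly, the upper bound in \eqref{BEST} gives $\|Dw(x)\| \le 2(1+\|Du(x)\|)$ pointwise, and since $\Omega$ is bounded, the constant contribution is in $L^p(\Omega)$; hence $Dw \in L^p(\Omega)$ as well.

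It remains to control $v$ and $w$ themselves in $L^p(\Omega)$. Since $\U$ is obtained via Poincar\'e's Lemma (as explained right before the lemma, $v$ and $w$ are primitives of $A(Du)J^{-1}$ and $B(Du)J^{-1}$ respectively), they are determined only up to additive constants. We may therefore normalize $v$ and $w$ to have zero mean on $\Omega$, and then the Poincar\'e--Wirtinger inequality on the bounded convex set $\Omega$ immediately gives $\|v\|_{L^p(\Omega)} \le C(\Omega,p)\|Dv\|_{L^p(\Omega)}$ and analogously for $w$. Combined with the gradient estimates above, this yields $v,w \in W^{1,p}(\Omega)$, and hence $\U \in W^{1,p}(\Omega)$.

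No step here is particularly delicate; the entire content is bookkeeping around the pointwise bounds \eqref{AEST}--\eqref{BEST}. The only point worth flagging is that one should not forget that Lemma \ref{basicest}\eqref{BEST} carries an additive constant, so strict linearity of the bound in $\|Du\|$ fails for $B$; this is harmless because $\Omega$ is bounded, but it is the one place where a naive argument could mislead.
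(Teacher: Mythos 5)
Your argument is correct and is essentially the paper's own: the Corollary is stated as an immediate consequence of the pointwise bounds \eqref{AEST}--\eqref{BEST} of Lemma \ref{basicest}, which give $Dv, Dw \in L^p(\Omega)$ since $\Omega$ is bounded, with $v,w$ themselves controlled after normalizing the additive constants from Poincar\'e's Lemma. Your extra remarks (Poincar\'e--Wirtinger, the harmless additive constant in the bound for $B$) just make explicit the bookkeeping the paper leaves implicit.
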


\section{Properties of $B(\cdot)$}\label{BBB}

In this section, we prove some properties of the matrix field $B(X)$. In Proposition \ref{firstreg} we show how these imply the smoothness of the function $w$ in $\eqref{areainc}$. We recall that
\begin{equation}\label{Bform}
B(X) = \frac{X^TXJ - (1 + \|X\|^2)J}{\A(X)}.
\end{equation}
\begin{lemma}\label{Bprop}
The following properties hold:
\begin{enumerate}[(i)]
\item $\tr(B(X)) = 0, \forall X$;\label{tr}
\item $B(X)_{12} <0, B(X)_{21} > 0, \forall X$;\label{signs}
\item $\det(B(X)) = 1$.\label{1}
\end{enumerate}
\end{lemma}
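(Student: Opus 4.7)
The plan is to establish all three properties by a direct computation from the explicit expression \eqref{Bform}, after introducing a convenient parametrization of $X^TX$. Write $X_1, X_2 \in \R^n$ for the columns of $X$ and set $a = |X_1|^2$, $b = |X_2|^2$, $c = X_1 \cdot X_2$, so that $\|X\|^2 = a+b$, $\det(X^TX) = ab - c^2$ (this last equality is Cauchy--Binet, already recalled in Lemma \ref{basicest}), and $\A(X)^2 = 1 + a + b + ab - c^2$.

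For (\ref{tr}), the cleanest route is to avoid a coordinate computation: since $X^TX$ is symmetric and $J$ is skew-symmetric, the product $X^TX J$ has zero trace (a standard fact, which one verifies by $\tr(SA) = \tr((SA)^T) = \tr(A^TS) = -\tr(SA)$ for $S$ symmetric, $A$ skew). As $\tr(J)=0$ as well, the formula \eqref{Bform} gives $\tr(B(X)) = 0$ immediately.

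For (\ref{signs}) and (\ref{1}), I would plug $J = \left(\begin{smallmatrix} 0 & 1 \\ -1 & 0 \end{smallmatrix}\right)$ into \eqref{Bform} and compute the matrix $X^TX J - (1+\|X\|^2)J$ component-wise; in the $(a,b,c)$ variables this reduces to the $2{\times}2$ matrix
\[
\begin{pmatrix} -c & -(1+b) \\ 1+a & c \end{pmatrix}.
\]
The sign claim (\ref{signs}) is then immediate, since $a = |X_1|^2 \ge 0$ and $b = |X_2|^2 \ge 0$ and $\A(X) > 0$. For (\ref{1}), the determinant of the above matrix equals $(1+a)(1+b) - c^2 = 1 + (a+b) + (ab - c^2) = 1 + \|X\|^2 + \det(X^TX) = \A(X)^2$; dividing by $\A(X)^2$ from \eqref{Bform} yields $\det(B(X)) = 1$.

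There is essentially no genuine obstacle here: the whole lemma is a one-line linear algebra check once one records the symmetric/skew structure behind (\ref{tr}) and picks the $(a,b,c)$ parametrization that makes (\ref{signs}) and (\ref{1}) transparent. The only thing to be a little careful about is invoking Cauchy--Binet at the right moment so that the numerator $(1+a)(1+b)-c^2$ is recognized as exactly $\A(X)^2$, which is what makes (\ref{1}) come out to the clean value $1$ rather than a more complicated expression.
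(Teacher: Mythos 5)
Your proof is correct and follows essentially the same route as the paper: you compute $\A(X)B(X) = X^TXJ - (1+\|X\|^2)J$ explicitly (your matrix in the $(a,b,c)$ variables coincides with the paper's \eqref{B}), read off the signs, and identify the determinant $(1+a)(1+b)-c^2$ with $\A(X)^2$ via Cauchy--Binet. The only cosmetic difference is that for the trace you invoke the symmetric-times-skew argument rather than reading the diagonal entries $-c,c$ off the explicit matrix, which changes nothing of substance.
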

\begin{proof}
Let us write $B(X)$ explicitely. Denote with $X^1$, $X^2$ the column vectors of $\R^n$ representing the columns of the matrix $X$. First,
\[
X^TXJ =
\left(
\begin{array}{cc}
\|X^1\|^2 & (X^1,X^2)\\
(X^1,X^2) & \|X^2\|^2
\end{array}
\right)J
=
\left(
\begin{array}{cc}
-(X^1,X^2)& \|X^1\|^2\\
-\|X^2\|^2& (X^1,X^2)
\end{array}
\right).
\]
Therefore,
\[
\A(X)B(X) = X^TXJ - (1 + \|X\|^2)J=
\left(
\begin{array}{cc}
-(X^1,X^2)& \|X^1\|^2\\
-\|X^2\|^2& (X^1,X^2)
\end{array}
\right)
-
\left(
\begin{array}{cc}
0& 1+\|X\|^2\\
-1-\|X\|^2& 0
\end{array}
\right),
\]
and
\begin{equation}\label{B}
\A(X)B(X) = X^TXJ - (1 + \|X\|^2)J=
\left(
\begin{array}{cc}
-(X^1,X^2)& -1 - \|X^2\|^2\\
1 +\|X^1\|^2& (X^1,X^2)
\end{array}
\right).
\end{equation}
Since $\mathcal{A}(X)$ is always positive, we can divide the previous expressions by $\mathcal{A}(X)$ to infer \eqref{tr} and \eqref{signs}. In order to prove the third property, we compute:
\[
\A^2(X)\det(B(X)) = 1 + \|X\|^2 + \|X^1\|^2\|X^2\|^2 - (X^1,X^2)^2 = 1 + \|X\|^2 +\det(X^TX) = \A^2(X).
\]
Again the positivity of $\mathcal{A}(X)$ implies the conclusion of \eqref{1}.
\end{proof}

We now consider properties of the differential inclusion
\begin{equation}\label{incl}
D w(x) = B(D u(x)), \text{ for a.e. } x \in \Omega
\end{equation}
for $w \in W^{1,2}(\Omega)$. By $\eqref{tr}$ of Lemma \ref{Bprop} we have $\dv(w) = 0$. Therefore, $w = (w_1,w_2)$ can be rewritten as
\[
w = (-\partial_2z,\partial_1z)
\]
for some $z \in W^{2,2}(\Omega)$. Consequently, \eqref{incl} is rewritten as
\[
\left(
\begin{array}{cc}
-\partial_{12}z&-\partial_{22}z\\
\partial_{11}z& \partial_{12}z
\end{array}
\right)
= B(D u).
\]
Using properties $\eqref{signs}$ and $\eqref{1}$ of Lemma \ref{Bprop}, we find that $z$ enjoys the following properties
\begin{equation}\label{Monge}
\begin{cases}
\det(D^2z) = 1 \text{ a.e.},\\
\Delta z > 0 \text{ a.e.},\\
z \in W^{2,2}(\Omega).
\end{cases}
\end{equation}

In the next Proposition, we will exploit some fundamental results concerning solutions to the Monge-Amp\`ere equation. We refer the reader to \cite{FIG} for the definitions and the results we will use. In particular, we refer the reader to \cite[Definition 2.1]{FIG} for the definition of Monge-Amp\`ere measure. Here and in the rest of the paper, we denote with $\mathcal{L}^m$ the $m$-dimensional Lebesgue measure in $\R^m$.

\begin{prop}\label{firstreg}
Suppose $z$ solves $\eqref{Monge}$. Then, $z$ is smooth.
\end{prop}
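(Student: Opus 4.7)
The plan is to recognize $z$ as a convex Alexandrov solution of the Monge-Amp\`ere equation $\det(D^2 z) = 1$ with smooth positive right-hand side, and then to invoke the interior regularity theory of Caffarelli as presented in \cite{FIG}.

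First I would establish convexity of $z$ from the pointwise almost everywhere information on the Hessian. Since $\det(D^2 z) = 1 > 0$ and $\tr(D^2 z) = \Delta z > 0$ a.e.\ on $\Omega$, both eigenvalues of the symmetric matrix $D^2 z(x)$ are strictly positive a.e., hence $D^2 z(x) > 0$ a.e. To upgrade this pointwise statement to genuine convexity, I would mollify by $z_\varepsilon \doteq z \ast \rho_\varepsilon$: each $z_\varepsilon$ is smooth with $D^2 z_\varepsilon = (D^2 z) \ast \rho_\varepsilon \ge 0$, because non-negative convolution preserves the positive-semidefinite cone. Hence each $z_\varepsilon$ is classically convex, and passing to the limit $\varepsilon \to 0$—using $W^{2,2} \hookrightarrow C^0$ in two dimensions—yields convexity of $z$.

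Second, I would identify $z$ as an Alexandrov solution of $\det(D^2 z) = 1$. By convexity and Alexandrov's theorem, $z$ is pointwise twice differentiable a.e., and its Alexandrov Hessian coincides a.e.\ with the distributional one. An approximation argument using the same mollifications $z_\varepsilon$, together with the weak stability of Monge-Amp\`ere measures under local uniform convergence of convex functions, then gives that the Monge-Amp\`ere measure $\mu_z$ (in the sense of \cite[Definition 2.1]{FIG}) equals $\det(D^2 z)\,\mathcal{L}^2 = \mathcal{L}^2 \llcorner \Omega$. Thus $z$ is an Alexandrov solution of $\det(D^2 z) = 1$ whose right-hand side is smooth and pinched between two positive constants. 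In dimension two, such Alexandrov solutions are automatically locally strictly convex (Pogorelov-type counterexamples occur only in dimension $\ge 3$), so Caffarelli's $C^{1,\alpha}$ regularity theorem applies and gives $z \in C^{1,\alpha}_{\loc}(\Omega)$. Once $z$ is $C^{1,\alpha}$ and locally strictly convex, the equation $\det(D^2 z) = 1$ is uniformly elliptic in non-divergence form on compactly contained subsets, and a standard Evans--Krylov / Schauder bootstrap promotes $z$ to $C^\infty(\Omega)$.

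The step I expect to be the most delicate is matching the $W^{2,2}$ setting to the hypotheses quoted from \cite{FIG}. In particular, ruling out a nontrivial singular part of $\mu_z$ via the mollification and stability argument, and invoking the two-dimensional strict convexity result for Alexandrov solutions with positive right-hand side, are the two technical points that will require the most care; the remaining bootstrap to $C^\infty$ is entirely standard once local strict convexity and $C^{1,\alpha}$ regularity are in place.
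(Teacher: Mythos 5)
Your proposal is correct and follows essentially the same route as the paper: convexity of $z$ via mollification and positivity of $D^2 z$ a.e., identification of $z$ as an Alexandrov solution with $\mu_z = \mathcal{L}^2$ through the weak stability of Monge-Amp\`ere measures together with the $W^{2,2}$ (hence $L^1$ for the determinant) convergence of $z_\varepsilon$, and then two-dimensional strict convexity plus interior regularity from \cite{FIG}. The only cosmetic difference is that you unwind the final step into Caffarelli $C^{1,\alpha}$ plus an Evans--Krylov/Schauder bootstrap, where the paper simply cites the strict convexity and smoothness theorems of \cite{FIG}.
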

\begin{proof}
We just need to prove that $z$ is an Alexandrov solution of the Monge-Amp\`ere equation, and then apply the classical regularity results for the Monge-Amp\`ere equation. It is not restrictive to prove the result on balls $B_r(\bar x) \subset \Omega$ such that $B_r(\bar x) \subset B_R(\bar x) \subset \Omega$. Consider a standard mollification kernel $\rho_\varepsilon$, i.e. $\rho_\varepsilon \in C^\infty_c(\R^2)$, $\spt(\rho_\varepsilon) \subset B_\varepsilon(0)$, $\rho_\varepsilon \ge 0$, $\int_{\R^2}\rho_\varepsilon(x)\dx = 1$ for every $\varepsilon > 0$. Finally, define $z_\varepsilon(x) \doteq (z\star \rho_\varepsilon)(x)$, for $\varepsilon \le \frac{R - r}{2}$. We exploit the embedding
\begin{equation}\label{emb}
C^0(\Omega)\cap L^\infty_{\loc}(\Omega) \subset W^{2,2}(\Omega)
\end{equation}
to argue that $z$ is continuous in $B_r(\bar x)$. We also prove that it is convex on $B_r(\bar x)$. For every $x \in B_r(\bar x)$ and for every $v \in \R^2$, we compute
\[
(D^2z_\varepsilon(x)v,v) = \int_{\R^{2}}\rho_\varepsilon(y + x) (D^2z(y)v,v)\dy > 0.
\]
Therefore, $z_\varepsilon$ is a sequence of convex functions converging in the $C^0(B_r(\bar x))$ topology to $z$. Thus, $z$ must be convex too. Denote with $\mu_z$ and $\mu_{z_\varepsilon}$ the Monge-Amp\`ere measures associated to $z$ and $z_\varepsilon$ respectively. We need to show that
\[
\mu_z = \det(D^2 z)\mathcal{L}^2\llcorner B_r(\bar x).
\]
To do so, first we notice that the $W^{2,2}$ convergence of $z_\varepsilon$ to $z$ imply that $\det(D^2z_\varepsilon) \to \det(D^2z)$ in the $L^1$- norm. Moreover we use \cite[Proposition 2.6]{FIG} to infer that the Monge-Amp\`ere measures associated to $z_\varepsilon$ converge weakly in the sense of measures to the Monge-Amp\`ere measure associated to $z$. From the regularity of $z_\varepsilon$, we infer $\mu_{z_\varepsilon} = \det(D^2 z_\varepsilon)\mathcal{L}^2\llcorner B_r(\bar x)$, hence for every $g \in C_c(B_r(\bar x))$ we have:
\[
\int_{B_r(\bar x)}gd\mu_\varepsilon = \int_{B_r(\bar x)}g(x)\det(D^2z_{\varepsilon})(x)\dx \to\int_{B_r(\bar x)}g\det(D^2 z)\dx
\]
and
\[
\int_{B_r(\bar x)}gd\mu_\varepsilon \to \int_{B_r(\bar x)} gd\mu.
\]
We infer $\mu = \det(D^2 z)\mathcal{L}^2\llcorner{B_r(\bar x)} =\mathcal{L}^2\llcorner{B_r(\bar x)}$. Hence, $z$ is an Alexandrov solution to $\det(D^2z) = 1$. It follows that $z$ is strictly convex by  \cite[Theorem 2.19]{FIG} and smooth by \cite[Theorem 3.10]{FIG}.
\end{proof}

Let us conclude this section with another important property of $B(X)$ that follows from a direct computation, see also \cite[Section 6]{SAP}:

\begin{prop}\label{alg}
For all $R > 0$, there exists $\mu = \mu(R)> 0$ such that if $\|X\|,\|Y\|\le R$, then
\begin{equation}\label{algg}
\det(B(X)-B(Y)) \le -\mu\|B(X) - B(Y)\|^2.
\end{equation}
\end{prop}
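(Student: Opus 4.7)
The plan is to first establish the qualitative fact $\det(B(X) - B(Y)) \le 0$, with equality iff $B(X) = B(Y)$, and then upgrade to \eqref{algg} by a compactness/contradiction argument on the compact set $\mathcal{K}_R := \{B(X) : \|X\| \le R\}$.

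For the qualitative step, Lemma \ref{Bprop} lets me parametrize $B(X) = \begin{pmatrix}-\alpha_1 & -\beta_1 \\ \gamma_1 & \alpha_1\end{pmatrix}$, $B(Y) = \begin{pmatrix}-\alpha_2 & -\beta_2 \\ \gamma_2 & \alpha_2\end{pmatrix}$, with $\beta_i, \gamma_i > 0$ and $\beta_i \gamma_i = 1+\alpha_i^2$. Expanding the determinant of the difference and eliminating the $\beta_i\gamma_i$ terms via the constraint gives
\[
-\det(B(X) - B(Y)) = (\alpha_1-\alpha_2)^2 - (\beta_1-\beta_2)(\gamma_1-\gamma_2) = \beta_1\gamma_2 + \beta_2\gamma_1 - 2(1+\alpha_1\alpha_2).
\]
Two applications of AM-GM -- using $(\beta_1\gamma_2)(\beta_2\gamma_1) = (1+\alpha_1^2)(1+\alpha_2^2)$, and then $\sqrt{(1+\alpha_1^2)(1+\alpha_2^2)} \ge 1 + \alpha_1\alpha_2$ (equivalent to $(\alpha_1-\alpha_2)^2 \ge 0$ after squaring; trivial if $1+\alpha_1\alpha_2 < 0$) -- yield $-\det(B(X) - B(Y)) \ge 0$. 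Tracing the equality cases forces $\alpha_1 = \alpha_2$ and $\beta_1/\beta_2 = \gamma_1/\gamma_2$, hence $B(X) = B(Y)$.

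For the quantitative upgrade, $\mathcal{K}_R$ is a compact subset of the smooth $2$-manifold $\mathcal{H} := \{M \in \R^{2\times 2} : \tr M = 0,\, \det M = 1,\, M_{12}<0,\, M_{21}>0\}$. Suppose for contradiction there is a sequence $M_1^n = B(X_n), M_2^n = B(Y_n)$ in $\mathcal{K}_R$, with $M_1^n \ne M_2^n$ and $G_n := -\det(M_1^n - M_2^n)/\|M_1^n - M_2^n\|^2 \to 0$. By compactness $M_i^n \to M_i^\infty \in \mathcal{K}_R$. The case $M_1^\infty \ne M_2^\infty$ is ruled out by the qualitative step. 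Otherwise $M^* := M_1^\infty = M_2^\infty$, and the normalized differences $\hat N_n := (M_1^n - M_2^n)/\|M_1^n - M_2^n\|$ subconverge to some $\hat N$ with $\|\hat N\|=1$, $\tr \hat N = 0$ (automatic), and $D\det(M^*)[\hat N] = 0$ (from Taylor-expanding $0 = \det M_1^n - \det M_2^n$ and dividing by $\|M_1^n - M_2^n\|$). Thus $\hat N$ is a unit tangent vector to $\mathcal{H}$ at $M^*$, and $G_n = -\det \hat N_n \to -\det \hat N$, forcing $\det \hat N = 0$.

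The main obstacle is this last step: showing $\det N \ne 0$ for every non-zero $N \in T_{M^*}\mathcal{H}$. Writing $M^* = \begin{pmatrix}a & b \\ c & -a\end{pmatrix}$ with $a^2 + bc = -1$, $b < 0 < c$, and $N = \begin{pmatrix}A & B \\ C & -A\end{pmatrix}$, the tangency condition reads $2aA + bC + cB = 0$. For $a \ne 0$, substituting $A = -(bC+cB)/(2a)$ and using $bc = -(1+a^2)$ yields the identity
\[
4a^2(-\det N) = 4a^2(A^2 + BC) = (bC - cB)^2 - 4BC,
\]
which is strictly positive for $N \ne 0$: if $BC \le 0$ this is immediate, while for $BC > 0$ the bound $b^2C^2 + c^2 B^2 \ge 2|bc|BC = 2(1+a^2)BC$ from AM-GM gives $(bC-cB)^2 \ge 4(1+a^2)BC > 4BC$. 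The case $a = 0$ is an easier direct check, yielding $-\det N = A^2 + (c/|b|) B^2 > 0$ for $N \ne 0$. This contradicts $\det \hat N = 0$ and concludes the proof.
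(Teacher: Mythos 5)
Your proposal is correct; I checked the key algebraic identities and they hold (in particular $-\det(B(X)-B(Y))=\beta_1\gamma_2+\beta_2\gamma_1-2(1+\alpha_1\alpha_2)$ using $\beta_i\gamma_i=1+\alpha_i^2$, and $4a^2(A^2+BC)=(bC-cB)^2-4BC$ under $bc=-(1+a^2)$ together with the tangency relation $2aA+bC+cB=0$), and the limit bookkeeping in the blow-up at the diagonal is sound: $\operatorname{tr}\hat N=0$ is automatic, the first-order term $\langle\cof^T(M_2^n),\hat N_n\rangle$ survives while the quadratic term $\|M_1^n-M_2^n\|\det\hat N_n$ vanishes, and $G_n=-\det\hat N_n$. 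Note, though, that the paper does not actually prove Proposition \ref{alg}: it is stated as following ``from a direct computation'', with a pointer to Section 6 of \v Sver\'ak's paper, where the intended argument is a purely quantitative estimate of $-\det(B(X)-B(Y))$ against $\|B(X)-B(Y)\|^2$ using the explicit entries of $B$ and the bounds they satisfy when $\|X\|,\|Y\|\le R$. Your route is genuinely different in flavor: you first get the sign and the equality case by AM--GM on the constraint surface $\{\operatorname{tr}=0,\ \det=1\}$, and then upgrade to a uniform constant by compactness, reducing the degenerate (diagonal) case to showing that $\det$ is nondegenerate (indeed negative definite) on the tangent space of that surface. This is cleaner and avoids chasing constants, at the price of producing a non-explicit $\mu(R)$; since the statement only asserts existence of $\mu(R)$, that is perfectly adequate, whereas the ``direct computation'' route would additionally yield an effective constant. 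One cosmetic remark: in the case $BC\le 0$ of your tangent-space step, strict positivity for $N\neq 0$ still requires the one-line check that $BC=0$ and $bC=cB$ force $B=C=0$ and then $A=0$ via the tangency relation; you wave at this with ``immediate'', and it is, but it deserves the sentence.
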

\section{Bounds on the subdeterminants and regularity}\label{BOUNDS}

\begin{Teo}\label{MAIN}
For every number $k \ge 0$ there exists positive numbers $C(k),\delta(k) > 0$ such that for every couple $(X,Y) \in \R^{n\times 2}\times \R^{n\times 2}$ the following holds:
\begin{equation}\label{sotto}
-\langle (A(X) - A(Y))J,X - Y \rangle + C\|B(X) - B(Y)\|\min\{\|Y\|,\|X\|\}\|X - Y\| \ge \delta\|X - Y\|^2,
\end{equation}
provided that
\begin{align*}
\max\{\|B(X)\|,\|B(Y)\|\} \le k.
\end{align*}
\end{Teo}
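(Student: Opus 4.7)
The plan is to leverage the identity $A(X) = -XB(X)$ together with the positivity of $B(X)J$. To establish the identity, I would combine Cauchy--Binet (which gives $\sum_{a<b}\det(X^{ab})C_{ab}(X) = X\cof(X^{T}X)$) with the $2\times 2$ Cayley--Hamilton relation $\cof(M) = \tr(M)\Id - M$, applied to $M = X^{T}X$. This turns \eqref{DD} into $D\A(X) = X[(1+\|X\|^{2})\Id - X^{T}X]/\A(X)$, and direct comparison with the definition of $B(X)$ yields $A(X) = D\A(X)J = -XB(X)$.

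With the identity in hand, I would expand
\[
A(X)-A(Y) = -(X-Y)B(Y) - X(B(X)-B(Y)),
\]
so that
\[
-\langle (A(X)-A(Y))J,\,X-Y\rangle = \langle(X-Y)B(Y)J,\,X-Y\rangle + \langle X(B(X)-B(Y))J,\,X-Y\rangle.
\]
The first summand equals $\tr((X-Y)^{T}(X-Y)\,B(Y)J)$, and since
$B(Y)J = [(1+\|Y\|^{2})\Id - Y^{T}Y]/\A(Y)$
is symmetric and positive definite with eigenvalues $(1+\sigma_{i}(Y)^{2})/\A(Y)$ (where $\sigma_{1}\ge\sigma_{2}\ge 0$ denote the singular values of $Y$), this is the trace of a product of two symmetric positive semidefinite matrices, hence at least $\lambda_{\min}(B(Y)J)\,\|X-Y\|^{2}$. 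The second summand is bounded above in absolute value by $\|X\|\,\|B(X)-B(Y)\|\,\|X-Y\|$ via submultiplicativity of the Frobenius norm together with $\|J\|_{\mathrm{op}}=1$.

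The last step is to turn $\|B(Y)\|\le k$ into a quantitative lower bound for $\lambda_{\min}(B(Y)J)$. A direct computation gives
\[
\|B(Y)\|^{2} = \frac{(1+\sigma_{1}^{2})^{2}+(1+\sigma_{2}^{2})^{2}}{(1+\sigma_{1}^{2})(1+\sigma_{2}^{2})} = r+\tfrac{1}{r},\qquad r = \tfrac{1+\sigma_{1}^{2}}{1+\sigma_{2}^{2}}\ge 1,
\]
so $\|B(Y)\|\le k$ forces $r\le k^{2}$, and hence $\lambda_{\min}(B(Y)J) = 1/\sqrt{r}\ge 1/k$. Running the same argument with the dual decomposition $A(X)-A(Y) = -(X-Y)B(X) - Y(B(X)-B(Y))$ replaces $\|X\|$ by $\|Y\|$ in the error term, and taking the better of the two gives the conclusion with $C = 1$ and $\delta = 1/k$ (the statement being vacuous for $k<\sqrt{2}$, since $\|B(\cdot)\|\ge\sqrt{2}$ identically). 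The only conceptual hurdle is spotting the identity $A(X) = -XB(X)$; once it is in hand, everything else reduces to elementary linear algebra.
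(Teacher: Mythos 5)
Your proposal is correct and is essentially the paper's own argument recast in matrix form: the identity $A(X)=-XB(X)$ is exactly the paper's componentwise formulas $D\A(X)_{j1}=\beta(X)x_{j1}-\gamma(X)x_{j2}$, $D\A(X)_{j2}=\alpha(X)x_{j2}-\gamma(X)x_{j1}$, and your decomposition $A(X)-A(Y)=-(X-Y)B(X)-Y(B(X)-B(Y))$ (choosing the variant that places $\min\{\|X\|,\|Y\|\}$ in the error term) is precisely the paper's freeze-the-coefficients expansion with the same Cauchy--Schwarz estimate on the remainder. The only real difference is how positive definiteness of the frozen form $B(\cdot)J$ is quantified: the paper runs a discriminant argument using $\det B=1$ and the boundedness of $\alpha+\beta$, while you compute the eigenvalues of $B(Y)J$ explicitly through the singular values of $Y$ (so that $\|B(Y)\|^2=r+1/r$ and $\lambda_{\min}(B(Y)J)=1/\sqrt{r}\ge 1/k$), a cleaner route to the same lower bound which moreover yields the explicit constants $C=1$ and $\delta=1/k$.
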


\begin{rem}\label{not}
Let us use the following notation: $\alpha(X) \doteq -B_{12}(X)$, $\beta(X) \doteq B_{21}(X)$, $\gamma(X) \doteq -B_{11}(X)$. These functions were explicitly written in Lemma \ref{Bprop}. Notice that, as it was proved in $\eqref{1}$ of Lemma \ref{Bprop}:
\begin{equation}\label{rel}
\alpha(X)\beta(X) - \gamma^2(X) = \det(B(X)) = 1,\forall X\in \R^{n\times 2}
\end{equation}
\end{rem}
\begin{proof}
For a matrix $M \in \R^{n\times 2}$, we use the notation $$M = \left(\begin{array}{cc}m_{11} &m_{12}\\ m_{21} & m_{22} \\ \dots & \dots \\ m_{n1} & m_{n2}\end{array}\right),$$
and we write $M^1$, $M^2$ for the first and second column of $M$, respectively, i.e.
\[
M^1 = \left(\begin{array}{c}m_{11} \\m_{21}\\ \dots \\ m_{n1}\end{array}\right) \quad \text{ and }\quad M^2 = \left(\begin{array}{c}m_{12}\\ m_{22}\\ \dots \\ m_{n2}\end{array}\right)
\]
First of all, we compute
\begin{align*}
\A(X)D\A(X)_{j1} &= x_{j1} -\sum_{i = 1}^{j - 1}x_{i2}(x_{i1}x_{j2} - x_{i2}x_{j1}) + \sum_{i = j}^{n}x_{i2}(x_{j1}x_{i2} - x_{j2}x_{i1})\\
 &= x_{j1} -\sum_{i = 1}^{j - 1}x_{i2}(x_{i1}x_{j2} - x_{i2}x_{j1}) + \sum_{i = j}^{n}x_{i2}(x_{j1}x_{i2} - x_{j2}x_{i1})\\
& = x_{j1}(1 + \|X^2\|^2) -  (X^1,X^2)x_{j2}
\end{align*}

and

\begin{align*}
\A(X)D\A(X)_{j2} &= x_{j2} +\sum_{i = 1}^{j - 1}x_{i1}(x_{i1}x_{j2} - x_{i2}x_{j1}) - \sum_{i = j}^{n}x_{i1}(x_{j1}x_{i2} - x_{j2}x_{i1})\\
 &= x_{j2}(1 + \|X^1\|^2) -(X^1,X^2)x_{j1}.
\end{align*}

Using the notation of Remark \ref{not}
\[
D\A(X)_{j1} = \beta(X)x_{j1} - \gamma(X)x_{j2} \text{ and } D\A(X)_{j2} = \alpha(X)x_{j2} - \gamma(X)x_{j1}.
\]

Assume, without loss of generality, that $\|X\| \ge \|Y\|$. We can write

\begin{align*}
&(D\A(X)_{j1} - D\A(Y)_{j1})(x_{j1} - y_{j1})\\
&=(\beta(X)x_{j1} - \beta(Y)y_{j1})(x_{j1} - y_{j1}) - (\gamma(X)x_{j2} - \gamma(Y)y_{j2})(x_{j1} - y_{j1})\\
&=\beta(X)(x_{j1} - y_{j1})^2 + (\beta(X) - \beta(Y))y_{j1}(x_{j1}-y_{j1})  - (\gamma(X)x_{j2} - \gamma(Y)y_{j2})(x_{j1} - y_{j1})\\
&=\beta(X)(x_{j1} - y_{j1})^2 + (\beta(X) - \beta(Y))y_{j1}(x_{j1}-y_{j1}) - \gamma(X)(x_{j2} - y_{j2})(x_{j1} - y_{j1}) \\
&+ (\gamma(Y) - \gamma(X))y_{j2}(x_{j1} - y_{j1})
\end{align*}
and
\begin{align*}
&(D\A(X)_{j2} - D\A(Y)_{j2})(x_{j2} - y_{j2}) \\
&= (\alpha(X)x_{j2} - \alpha(Y)y_{j2})(x_{j2} - y_{j2}) - (\gamma(X)x_{j1} - \gamma(Y)y_{j1})(x_{j2} - y_{j2})\\
&=\alpha(X)(x_{j2} - y_{j2})^2 + (\alpha(X) - \alpha(Y))y_{j2}(x_{j2}-y_{j2})  - (\gamma(X)x_{j1} - \gamma(Y)y_{j1})(x_{j2} - y_{j2})\\
&=\alpha(X)(x_{j2} - y_{j2})^2 + (\alpha(X) - \alpha(Y))y_{j2}(x_{j2}-y_{j2})  - \gamma(X)(x_{j1} - y_{j1})(x_{j2} - y_{j2})\\
&+(\gamma(Y) - \gamma(X))y_{j1}(x_{j2} - y_{j2}).
\end{align*}
Therefore
\begin{equation}\label{bigsum}
\begin{split}
&-\langle (A(X) - A(Y))J,X - Y \rangle = \langle D\A(X) - D\A(Y),X - Y \rangle \\
&=\sum_{j = 1}^n(D\A(X)_{j1} - D\A(Y)_{j1})(x_{j1} - y_{j1}) + \sum_{j = 1}^n(D\A(X)_{j2} - D\A(Y)_{j2})(x_{j2} - y_{j2}) \\
& =\sum_j\beta(X)(x_{j1} - y_{j1})^2 - 2\gamma(X)(x_{j2} - y_{j2})(x_{j1} - y_{j1}) + \alpha(X)(x_{j2} - y_{j2})^2 \\
& +(\gamma(Y) - \gamma(X))y_{j2}(x_{j1} - y_{j1})+ (\alpha(X) - \alpha(Y))y_{j2}(x_{j2}-y_{j2}) \\
&+(\gamma(Y) - \gamma(X))y_{j1}(x_{j2} - y_{j2})+ (\beta(X) - \beta(Y))y_{j1}(x_{j1}-y_{j1}).
\end{split}
\end{equation}

 First, we claim that there exists a constant $\delta = \delta(k)$ independent of $X$ such that, for every $X$ for which $\|B(X)\|\le k$ and for every $a,b \in \R$
\begin{equation}\label{poll}
-2|\gamma(X)| ab + \beta(X) a^2 + \alpha(X) b^2 \ge \delta(a^2 + b^2).
\end{equation}
Fix $X$. Since $\alpha(X) + \beta(X) \ge 2$, either $\beta(X) \ge 1$ or $\alpha(X) \ge 1$. Without loss of generality, we can suppose $\beta(X) \ge 1$. Therefore, if $b = 0$, we can choose any $\delta < 1$. If $b \neq 0$, we divide the expression by $b^2$ and claim \eqref{poll} becomes equivalent to
\[
(\beta(X) - \delta) x^2 -2|\gamma(X)| x + (\alpha(X) - \delta) \ge 0, \forall x \in \R.
\]
Taking into account \eqref{rel}, i.e. $\gamma^2 = \alpha\beta - 1$, the discriminant of the previous equation becomes
\[
\Delta(X)_\delta = 4\gamma^2 - 4(\alpha(X)-\delta)(\beta(X)-\delta) = -4 -4\delta^2 +4\delta(\alpha(X) + \beta(X)).
\]
Since $\beta(X)$ and $\alpha(X)$ are both uniformly bounded, we can choose some small $\delta < 1$ depending only on $k$ (so, in particular, independent of $X$) for which $\Delta(X)_\delta< 0$ for every $X$ such that $\|B(X)\|\le k$. This implies that the polynomial $x\mapsto(\beta(X) - \delta) x^2 -2\gamma(X) x + (\alpha(X) - \delta)$ has no real root. Since $\beta(X) \ge 1 > \delta$ by assumption, then the polynomial is positive for large values of $x$, therefore it is positive everywhere, as we wanted. Having shown the claim, we can apply inequality \eqref{poll} with $a = \sqrt{\sum_{j}(x_{j1} - y_{j1})^2}$ and $b = \sqrt{\sum_{j}(x_{j2} - y_{j2})^2}$ to deduce that
\begin{equation}\label{fterm}
\begin{split}
&\sum_j(\beta(X)(x_{j1} - y_{j1})^2 - 2\gamma(X)(x_{j2} - y_{j2})(x_{j1} - y_{j1}) + \alpha(X)(x_{j2} - y_{j2})^2)  \\
&\ge\beta(X)\sum_j(x_{j1} - y_{j1})^2 + \alpha(X)\sum_{j}(x_{j2} - y_{j2})^2 -2|\gamma(X)|\sqrt{\sum_{j}(x_{j1} - y_{j1})^2}\sqrt{\sum_{j}(x_{j2} - y_{j2})^2})\\
& \ge\delta\sum_j((x_{j1} - y_{j1})^2 + (x_{j2} - y_{j2})^2) = \delta\|X - Y\|^2.
\end{split}
\end{equation}

We also estimate:
\begin{equation}\label{sterm}
\begin{split}
(\gamma(Y) - \gamma(X))y_{j2}(x_{j1} - y_{j1}) \ge -|\gamma(Y) - \gamma(X)|\|Y\|\|X - Y\|,\\
(\alpha(X) - \alpha(Y))y_{j2}(x_{j2}-y_{j2})  \ge -|\alpha(Y) - \alpha(X)|\|Y\|\|X - Y\|,\\
(\gamma(Y) - \gamma(X))y_{j1}(x_{j2} - y_{j2})\ge -|\gamma(Y) - \gamma(X)|\|Y\|\|X - Y\|,\\
(\beta(X) - \beta(Y))y_{j1}(x_{j1}-y_{j1})\ge -|\beta(Y) - \beta(X)|\|Y\|\|X - Y\|.
\end{split}
\end{equation}

By the definition of $\alpha,\beta$ and $\gamma$, $2|\gamma(Y) - \gamma(X)| + |\alpha(Y) - \alpha(X)| + |\beta(Y) - \beta(X)| \le C_1\|B(X) - B(Y)\|$, where $C_1 > 0$ is an universal constant. Combining \eqref{fterm} and \eqref{sterm}, we finally estimate in \eqref{bigsum}:

\begin{align*}
& \sum_j\left(\beta(X)(x_{j1} - y_{j1})^2 - 2\gamma(X)(x_{j2} - y_{j2})(x_{j1} - y_{j1}) + \alpha(X)(x_{j2} - y_{j2})^2\right) \\
& + \sum_j(\gamma(Y) - \gamma(X))y_{j2}(x_{j1} - y_{j1})+ \sum_j(\alpha(X) - \alpha(Y))y_{j2}(x_{j2}-y_{j2})\\
& +\sum_j(\gamma(Y) - \gamma(X))y_{j1}(x_{j2} - y_{j2})+ \sum_j(\beta(X) - \beta(Y))y_{j1}(x_{j1}-y_{j1})\\
& \ge \delta\|X - Y\|^2 - nC_1\|B(X) - B(Y)\|\|Y\|\|X - Y\|.
\end{align*}
This estimate completes the proof of \eqref{sotto}.
\end{proof}

\subsection{Regularity of the Differential Inclusion}

The regularity of $W^{1,2}$ solutions of \eqref{areainc} is surely a well-known result to the experts of the field. Since we could not find a reference of this fact in the literature and the argument is very short, we give a proof here.

\begin{prop}\label{secondreg}
Every $W^{1,2}$ solution $\U$ of \eqref{areainc} is smooth.
\end{prop}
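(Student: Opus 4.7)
The strategy is to couple the smoothness of $w$ provided by Proposition \ref{firstreg} with the coercivity afforded by Theorem \ref{MAIN}, via a Nirenberg difference-quotient argument on the outer-variation equation. Since $\U=(u,v,w)$ solves \eqref{areainc}, one has $Dw=B(Du)$, so $w$ satisfies the hypotheses of Proposition \ref{firstreg}, which yields that $w$ is smooth on $\Omega$. In particular $B(Du)$ is smooth and locally bounded. Fix $\Omega'\Subset\Omega''\Subset\Omega$, set $k\doteq\|B(Du)\|_{L^\infty(\Omega'')}$, and denote by $\delta=\delta(k)$ and $C=C(k)$ the constants furnished by Theorem \ref{MAIN}.

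Next I would test the outer variation $\int\langle D\A(Du),D\phi\rangle\dx=0$ with $\phi(x)=\eta^2(x)(u(x+h)-u(x))$, where $\eta\in C^\infty_c(\Omega'')$ satisfies $\eta\equiv 1$ on $\Omega'$ and $|h|$ is small, and subtract the same identity satisfied by the shifted map $u(\cdot+h)$. Setting $D\A_h(x)\doteq D\A(Du(x+h))-D\A(Du(x))$ and $u_h(x)\doteq u(x+h)-u(x)$, this yields
\[
\int \eta^2 \langle D\A_h,Du_h\rangle\dx=-2\int\eta\,\langle D\A_h,\,u_h\otimes D\eta\rangle\dx.
\]
Using the identity $\langle D\A(X)-D\A(Y),X-Y\rangle=-\langle(A(X)-A(Y))J,X-Y\rangle$ together with Theorem \ref{MAIN} applied pointwise (valid since $\|B(Du)\|\le k$ on $\Omega''$), the left-hand side is bounded below by $\delta\int\eta^2|Du_h|^2\dx$ up to a remainder of the form $C\int\eta^2\|B(Du(\cdot+h))-B(Du)\|\,|Du|\,|Du_h|\dx$. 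Smoothness of $w$ implies $\|B(Du(x+h))-B(Du(x))\|=\|Dw(x+h)-Dw(x)\|\le C'|h|$ on $\Omega''$, and Young's inequality absorbs this remainder into the left-hand side at the cost of a term of order $|h|^2\|Du\|^2_{L^2(\Omega'')}$. The boundary term on the right-hand side can in turn be estimated with the linear growth $|D\A(X)|\le C(1+|X|)$ (a direct consequence of Lemma \ref{basicest}) coupled with the Nirenberg bound $\|u_h\|_{L^2(\Omega')}\le|h|\,\|Du\|_{L^2(\Omega'')}$, giving a contribution of order $|h|$. Combining these estimates yields $\int_{\Omega'}|Du_h|^2\dx\le C|h|$, which places $u$ in a fractional Sobolev space $W^{1+s,2}_{\loc}$ for some $s>0$, and by two-dimensional Sobolev embedding, $Du\in L^{p}_{\loc}(\Omega)$ for some $p>2$.

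Once $Du$ has integrability above the dimension, I would iterate the scheme: improved integrability of $Du$ sharpens both the control of $D\A_h$ and the boundary term, and after finitely many rounds one reaches $Du\in L^\infty_{\loc}$, i.e., $u$ is locally Lipschitz. At this stage the smoothness of the full map $\U$ follows from the classical regularity result of Fischer-Colbrie \cite{DFC} for two-dimensional Lipschitz stationary graphs (equivalently, from the perturbation theorem of Section \ref{PERT} applied with $f=\A$, so $\alpha=0$).

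The main obstacle is precisely the passage from $W^{1,2}$ to $W^{1,\infty}$ for $u$: without an \emph{a priori} Lipschitz bound, $D\A$ is controlled only through its linear growth, and the boundary term loses one power of $|h|$ relative to what a pointwise Lipschitz bound would yield. Managing this discrepancy through the iterated fractional-Sobolev and $L^p$ estimates, until one recovers enough regularity to trigger the classical theory, is the delicate technical point.
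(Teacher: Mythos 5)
Your opening move is the same as the paper's: use Proposition \ref{firstreg} to conclude that $w$, hence $B(Du)=Dw$, is smooth and locally bounded. But from that point on you treat the outer-variation equation as a genuinely quasilinear problem and run a difference-quotient bootstrap, and the decisive step of that bootstrap is exactly where the argument breaks. From $\int_{\Omega'}\|Du_h\|^2\dx\le C|h|$ you get fractional regularity and $Du\in L^p_{\loc}$ for some $p>2$; iterating may plausibly be pushed to $u\in W^{2,2}_{\loc}$, but in two dimensions $W^{2,2}$ only gives $Du\in \mathrm{BMO}$, not $Du\in L^\infty_{\loc}$. The claim that ``after finitely many rounds one reaches $Du\in L^\infty_{\loc}$'' is asserted, not proved, and it is precisely the passage your final appeal to Fischer-Colbrie (which needs a Lipschitz graph) or to Theorem \ref{FINALE} (which needs $\|D\U\|_\infty\le R$) requires. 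You flag this yourself as the delicate point; as written it is a genuine gap, since reaching $W^{1,\infty}$ for a vectorial quasilinear system by growth estimates alone is in general not possible.

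The missing idea, which makes the whole difficulty disappear, is that the equation for $u$ is in fact \emph{linear} once $w$ is known. Writing $\alpha=-B_{12}(Du)$, $\beta=B_{21}(Du)$, $\gamma=-B_{11}(Du)$, one has the exact identities $D\A(Du)_{j1}=\beta\,\partial_1u^j-\gamma\,\partial_2u^j$ and $D\A(Du)_{j2}=\alpha\,\partial_2u^j-\gamma\,\partial_1u^j$, so $\dv(D\A(Du))=0$ says that each component $u^j$ is a $W^{1,2}$ weak solution of the divergence-form equation \eqref{eq} whose coefficient matrix $\left(\begin{smallmatrix}\beta&-\gamma\\-\gamma&\alpha\end{smallmatrix}\right)$ is built solely from the entries of $Dw$, hence is smooth; it is locally uniformly elliptic because $\alpha\beta-\gamma^2=\det(B(Du))=1$ with $\alpha,\beta>0$ (the same argument as in \eqref{poll}, using Lemma \ref{Bprop} and \eqref{rel}). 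Classical linear elliptic regularity then gives smoothness of each $u^j$ directly from the $W^{1,2}$ hypothesis, with no Lipschitz bound, no Theorem \ref{MAIN}, and no iteration; smoothness of $v$ and $w$ follows since their gradients are smooth functions of $Du$. I would recommend replacing the bootstrap by this observation.
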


\begin{proof}
From the proof of the previous theorem, we know that
\[
D\A(X)_{j1} = \beta(X)x_{j1} - \gamma(X)x_{j2} \text{ and } D\A(X)_{j2} = \alpha(X)x_{j2} - \gamma(X)x_{j1}.
\]
The equation
\[
\dv(D\A(D u)) = 0
\]
reads, for every $j \in \{1,\dots,n\}$,
\begin{equation}\label{eq}
\partial_{1}(\beta(D u)\partial_1u^j - \gamma(D u)\partial_2u^j) + \partial_2(\alpha(D u)\partial_2u^j - \gamma(D u)\partial_1u^j) = 0,
\end{equation}
where $u = (u^1,\dots,u^n)$. The previous equation has to be intended in the weak sense. In \eqref{firstreg} it is showed that $\alpha(D u)$, $\beta(D u)$, $\gamma(D u)$ are smooth functions. Moreover, the matrix
\[
M(D u) = (BJ)^T(D u) = 
\left(
\begin{array}{cc}
\beta(D u) & -\gamma(D u)\\
-\gamma(D u) & \alpha(D u)
\end{array}
\right)
\]
is locally bounded in the sense of quadratic forms above and below by
\begin{equation}\label{boundds}
c_1\id \le M(D u(x)) \le c_2\id
\end{equation}
for two positive constants $c_1\le c_2$. The argument to prove \eqref{boundds} is exactly the same as the one used to prove \eqref{poll}. Therefore, every $u^j$ is the weak solution to a second order elliptic equation with smooth coefficients, \eqref{eq}. It is well known that solutions to this class of equations are smooth. 
\end{proof}

\begin{rem}
This is not the first time that regularity results for the Monge-Amp\`ere equation have been exploited to obtain regularity for the minimal surface equation. In \cite{NI}, this connection is used to prove Bernstein's theorem (i.e., that the only solution to the minimal surface equation/system in the whole $\R^2$ are affine functions) for $2$-dimensional minimal graphs in $\R^3$. We remark that, in view of the well-known Bernstein property for solutions of Monge-Amp\`ere equation (see \cite{NI}), Proposition \ref{firstreg} and Proposition \ref{secondreg} immediately give Bernstein's property for $W^{1,\infty}$ $2$-dimensional minimal graphs in $\R^{n + 2}$.
\end{rem}

\section{Compactness of the differential inclusion in $W^{1,p}$, $p > 2$}\label{COMP}

The main result of this section is Theorem \ref{Cor}, where we prove the compactness of the differential inclusion \eqref{areainc}. First, we recall some results about Young measure.

\subsection{Preliminaries: Young measures}

The results we report here are taken from \cite[Section 3]{DMU}, to which we refer the interested reader for a more detailed exposition of the subject. We will denote with $\mathcal{M}(\R^m)$ the space of finite and positive measures on $\R^m$.

\begin{Teo}[Fundamental Theorem on Young measure]
Let $E\subset \R^n$ be a Lebesgue measurable set with finite measure. Consider a sequence $z_j:E\subset \R^d \to \R^N$ of measurable functions satisfying the condition
\[
\sup_{j \in \N}\int_E\|z_j\|^s< +\infty,
\]
for some $s>0$. Then there exists a subsequence $z_{j_k}$ and a weak-* measurable map $\nu:E \to \mathcal{M}(\R^N)$ such that for $\mathcal{L}^d$-a.e. $x \in E$, $\nu_x \in \mathcal{M}(\R^N)$ and in addition $\nu_x(\R^N) = 1$. Moreover, for every $A \subset E$, and for every $f \in C(\R^N)$, if
\[
f(z_{j_k}) \text{ is relatively weakly compact in } L^1(A),
\]
then,
\[
f(z_{j_k}) \rightharpoonup \bar f \text{ in } L^1(A), \text{ where } \bar f(x) =\langle \nu_x, f\rangle = \int_{\R^N}f(y)d\nu_x(y).
\]
In this case, we say that $z_{j_k}$ {generates the Young measure} $\nu$.
\end{Teo}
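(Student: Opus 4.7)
The plan is to view each $z_j$ as inducing the parametrized Dirac mass $\nu^j_x \doteq \delta_{z_j(x)}$, extract a weak-$*$ limit via Banach--Alaoglu in the dual space $L^1(E;C_0(\R^N))^{*} \cong L^\infty_{w*}(E;\mathcal{M}(\R^N))$, and then transfer the resulting convergence from compactly supported continuous test functions to the general continuous case via uniform integrability. Since $L^1(E;C_0(\R^N))$ is separable (as $\mathcal{L}^d(E) < \infty$), Banach--Alaoglu yields a subsequence $\nu^{j_k}$ converging weakly-$*$ to some $\nu$, and non-negativity of $\nu_x$ for a.e.\ $x$ follows by testing against non-negative $\phi(x)g(y)$ with $\phi \in L^1(E)$ and $g \in C_0(\R^N)$.

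To upgrade $\nu_x$ to a probability measure, I would exploit the uniform $L^s$-bound as tightness. By Chebyshev's inequality,
\[
\mathcal{L}^d\bigl(\{x \in E : \|z_{j_k}(x)\| > M\}\bigr) \le M^{-s}\sup_k \int_E \|z_{j_k}(x)\|^s\,dx \to 0 \text{ as } M \to \infty,
\]
uniformly in $k$. Choosing cutoffs $\chi_M \in C_0(\R^N)$ with $0 \le \chi_M \le 1$ and $\chi_M \nearrow 1$ pointwise, and passing to the weak-$*$ limit in the identity
\[
\int_A \chi_M(z_{j_k}(x))\,dx = \int_A \langle \nu^{j_k}_x,\chi_M\rangle\,dx,
\]
one obtains $\int_A \langle \nu_x,\chi_M\rangle\,dx \to \mathcal{L}^d(A)$ as $M \to \infty$, which forces $\nu_x(\R^N) = 1$ for a.e.\ $x \in E$.

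For the final convergence assertion, the straightforward case is $f \in C_0(\R^N)$: for any $\phi \in L^\infty(A)$, the test vector $\phi(x)f(y)$ belongs to $L^1(A;C_0(\R^N))$ (since $A$ has finite measure), and weak-$*$ convergence directly yields $f(z_{j_k}) \rightharpoonup \bar f$ in $L^1(A)$. The main obstacle is the extension to general $f \in C(\R^N)$, which is precisely where the hypothesis of relative weak compactness of $\{f(z_{j_k})\}$ in $L^1(A)$ becomes essential. By the Dunford--Pettis theorem this hypothesis is equivalent to uniform integrability. I would truncate $f_M \doteq f\chi_M \in C_0(\R^N)$, apply the $C_0$ case to $f_M$ to get $f_M(z_{j_k}) \rightharpoonup \overline{f_M}$ in $L^1(A)$, and exploit uniform integrability to make $\sup_k \|f(z_{j_k}) - f_M(z_{j_k})\|_{L^1(A)}$ arbitrarily small as $M \to \infty$. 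A parallel bound on $\|\bar f - \overline{f_M}\|_{L^1(A)}$ follows from Fatou's lemma applied to $|f|$ against $\nu_x$, and a three-$\varepsilon$ argument then closes the proof.
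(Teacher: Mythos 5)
This theorem is not proved in the paper at all: it is a preliminary recalled verbatim from \cite[Section 3]{DMU}, so there is no in-paper argument to compare against. Your sketch correctly reproduces the standard proof found in that reference --- the Dirac-mass embedding $x \mapsto \delta_{z_j(x)}$ into $L^\infty_{w*}(E;\mathcal{M}(\R^N)) \cong L^1(E;C_0(\R^N))^*$, Banach--Alaoglu plus separability, tightness from the uniform $L^s$-bound via Chebyshev to get $\nu_x(\R^N)=1$, and truncation combined with Dunford--Pettis equi-integrability to pass from $C_0$ to general continuous $f$ --- and I see no genuine gap in it.
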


\begin{Cor}\label{Youngp}
Let $p > 1$ and $E\subset \R^d$ be a Lebesgue measurable set with finite measure. If $z_j$ is weakly convergent in $L^p(E)$ to a function $z \in L^p(E)$ and if it generates the Young measure $\nu$, then, for every $f \in C(\R^N)$ such that
\[
|f(y)|\le C(1 + \|y\|^q), \text{ for } q < p,
\]
the following holds
\[
f(z_j) \rightharpoonup \bar{f}, \text{weakly in } L^{\frac{p}{q}}(E).
\]
In particular, the choice $f$ such that $f(y) = y,\; \forall y \in \R^N$ yields
\begin{equation}\label{exp}
z(x) = \langle\nu_x,f\rangle.
\end{equation}
\end{Cor}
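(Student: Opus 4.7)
The plan is to deduce the corollary from the Fundamental Theorem on Young measures by verifying the two hypotheses needed there (relative weak compactness in $L^1$) and upgrading the convergence from $L^1$ to $L^{p/q}$ via reflexivity.

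First I would establish a uniform bound on $f(z_j)$ in $L^{p/q}(E)$. Since $z_j$ converges weakly in $L^p(E)$, the Banach-Steinhaus theorem gives $\sup_j \|z_j\|_{L^p(E)} < +\infty$. Using the growth condition $|f(y)|\le C(1+\|y\|^q)$ I would estimate
\begin{equation*}
\int_E |f(z_j)|^{p/q}\,dx \le C'\int_E \bigl(1+\|z_j\|^q\bigr)^{p/q}\,dx \le C''\Bigl(|E|+\int_E \|z_j\|^p\,dx\Bigr),
\end{equation*}
which is uniformly bounded in $j$. Because $p/q>1$, the space $L^{p/q}(E)$ is reflexive, so up to a subsequence $f(z_j)$ converges weakly in $L^{p/q}(E)$ to some function $g$. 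Moreover, the uniform $L^{p/q}$-bound with $p/q>1$ also yields equi-integrability of $\{f(z_j)\}$ in $L^1(E)$ (de la Vall\'ee Poussin), hence relative weak compactness in $L^1(E)$ on any subset $A\subset E$.

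Applying the Fundamental Theorem on Young measures to the sequence $f(z_j)$ (which generates a Young measure determined by $\nu$ through push-forward) I would identify the weak $L^1$ limit as $\bar f(x)=\langle \nu_x,f\rangle$. Since weak $L^{p/q}$-convergence implies weak $L^1$-convergence on sets of finite measure, the two limits must coincide, so $g=\bar f$ and in fact $f(z_j)\rightharpoonup \bar f$ in $L^{p/q}(E)$. A standard subsequence-of-every-subsequence argument then shows that the full sequence converges weakly to $\bar f$, since $\bar f$ is uniquely determined by $\nu$.

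For the last assertion, I would specialize to $f(y)=y$, for which the growth exponent is $q=1<p$. By the first part, $z_j=f(z_j)$ converges weakly in $L^p(E)$ to $\langle \nu_x,\id\rangle$. By hypothesis the weak $L^p$-limit is $z$, and by uniqueness of weak limits $z(x)=\langle\nu_x,\id\rangle$ almost everywhere in $E$. The only point requiring care is checking that the growth hypothesis indeed allows $f$ to be unbounded (otherwise the result would be trivial from the standard $L^\infty$-version), but this is exactly what the bound $q<p$ buys us via reflexivity of $L^{p/q}$.
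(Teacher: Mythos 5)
Your argument is correct, and it follows the standard route: the paper itself does not prove this corollary but simply quotes it from the cited reference (M\"uller's notes on variational models, \cite{DMU}), where the proof is exactly of the type you give — uniform $L^{p/q}$ bound from the growth condition and weak $L^p$ boundedness, equi-integrability (Dunford--Pettis) to get relative weak $L^1$ compactness so the Fundamental Theorem identifies the limit as $\bar f(x)=\langle\nu_x,f\rangle$, and reflexivity of $L^{p/q}$ plus a subsequence argument to upgrade the convergence. Two cosmetic points: the Fundamental Theorem is applied to the original sequence $z_j$ with test function $f$ (no push-forward Young measure for $f(z_j)$ is needed), and for $f(y)=y$, which is vector-valued, one applies the scalar statement componentwise.
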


Another result, fundamental to establish compactness, is the following \cite[Corollary 3.2]{DMU}:

\begin{Cor}\label{strongc}
Suppose that a sequence $z_j$ of measurable functions from $E$ to $\R^N$ generates the Young measure $\nu$. Then 
\[
z_j \to z \text{ in measure if and only if } \nu_x = \delta_{z(x)} \text{ for $\mathcal{L}^d$-a.e }x.
\]
In particular, if $z_j \in L^p(E)$, for $p >1$, and  the following hold
\begin{enumerate}[(i)]
\item $z_j$ is weakly convergent in $L^p(E)$ to a function $z \in L^p(E)$,
\item $z_j$ generates the Young measure $\nu$,
\item $\nu_x = \delta_{z(x)} \text{ for $\mathcal{L}^d$-a.e }x.$,
\end{enumerate}
Then,
\[
z_j \to z \text{ in $L^q(E)$}, \text{ for every $1\le q <p$}.
\]
\end{Cor}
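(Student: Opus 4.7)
The plan is to prove the equivalence ``$z_j \to z$ in measure iff $\nu_x = \delta_{z(x)}$ a.e.'' first, and then deduce the $L^q$ statement by combining convergence in measure with a uniform integrability estimate.

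For the implication ``convergence in measure $\Rightarrow$ $\nu_x = \delta_{z(x)}$'', I would pick any $f \in C_b(\R^N)$. Since $f \circ z_j$ converges in measure to $f \circ z$ and is uniformly bounded by $\|f\|_\infty$, dominated convergence yields $f(z_j) \to f(z)$ in $L^1$ on every subset of $E$ of finite measure. But the Fundamental Theorem on Young Measures simultaneously identifies the weak $L^1$ limit as $\langle \nu_\cdot, f\rangle$. Comparing gives $\langle \nu_x, f\rangle = f(z(x))$ a.e., and choosing $f$ in a countable dense subset of $C_0(\R^N)$ pins down $\nu_x = \delta_{z(x)}$ almost everywhere.

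For the converse, the cleanest route is to test the Young-measure identity against the Carath\'eodory integrand $\phi(x,y) \doteq \min\{|y - z(x)|,\,1\}$, which vanishes exactly on the graph of $z$. The main obstacle---and the only non-routine step in the whole proof---is to upgrade the conclusion of the Fundamental Theorem from tensor integrands of the form $\eta(x)f(y)$ (with $\eta \in L^1$, $f \in C_b(\R^N)$) to bounded Carath\'eodory integrands $\phi(x,y)$. I would do this by approximating $\phi$ in $L^1(E; C_b(\R^N))$ by finite sums $\sum_i \chi_{A_i}(x) f_i(y)$, using separability of $C_0(\R^N)$ and controlling the tail by $\|\phi\|_\infty \le 1$. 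Once the Carath\'eodory version is in hand, the identity reads
\[
\int_E \min\{|z_j(x) - z(x)|,\,1\}\,dx \;=\; \int_E \phi(x,z_j(x))\,dx \;\to\; \int_E \phi(x,z(x))\,dx \;=\; 0,
\]
which is precisely convergence of $z_j$ to $z$ in measure.

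For the ``in particular'' part, combine the first equivalence with uniform integrability. Weak $L^p$ convergence in (i) gives $M \doteq \sup_j \|z_j\|_{L^p} < \infty$ by Banach--Steinhaus, and $z \in L^p$ by hypothesis. For any $1 \le q < p$ and any measurable $F \subseteq E$, H\"older yields
\[
\int_F |z_j - z|^q\,dx \;\le\; (2M)^q\,|F|^{\,1 - q/p},
\]
producing uniform integrability of $\{|z_j - z|^q\}$. Meanwhile, hypotheses (ii)--(iii) trigger the equivalence just proved, so $z_j \to z$ in measure and hence $|z_j-z|^q \to 0$ in measure. Vitali's convergence theorem then delivers $|z_j - z|^q \to 0$ in $L^1(E)$, i.e.\ $z_j \to z$ in $L^q(E)$, for every $1 \le q < p$.
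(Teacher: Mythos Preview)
Your proof is correct. The paper does not supply its own proof of this corollary: it is quoted verbatim from \cite[Corollary 3.2]{DMU} as a preliminary tool, with no argument given. Your write-up is therefore not competing against anything in the paper, and the route you take---testing against bounded continuous $f$ for one direction, the Carath\'eodory integrand $\min\{|y-z(x)|,1\}$ for the other, and Vitali for the $L^q$ upgrade---is exactly the standard one found in M\"uller's notes and elsewhere. One small remark: in the H\"older bound you implicitly use $\|z\|_{L^p}\le M$, which follows from weak lower semicontinuity of the norm under $z_j\rightharpoonup z$; it would be cleaner to say so.
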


\subsection{Compactness results}

We will make use of the following identity, that can be easily checked by direct computation

\begin{equation}\label{linearalg}
\langle X,YJ\rangle = -\sum_{i = 1}^m\det
\left(\begin{array}{c}
X_i\\
Y_i
\end{array}\right)
\end{equation}
 for every $X,Y \in \R^{m\times 2}$, where $X_i$, $Y_i$ are the $i$-th rows of the matrices $X$ and $Y$.

\begin{Teo}[Compactness of the differential inclusion]\label{Cor}
Suppose $\U_n: \Omega \to \R^{2n + 2}$ is an equibounded sequence in $W^{1,p}(\Omega;\R^{2n+ 2})$ for $p > 2$. If
\begin{equation}\label{meas1}
\int_{\Omega}\dist(D\U_n(x),C_\A)\eta(x) \to 0,\; \forall \eta \in C^\infty_c(\Omega),
\end{equation}
then, up to a (non-relabeled) subsequence, $\U_n$ converges strongly in $W^{1,\bar p}$ to a function $\U: \Omega \to \R^{2n + 2}$, for every $1\le \bar p < p$. Moreover, $D \U(x) \in C_\A$ for a.e. $x \in \Omega$.
\end{Teo}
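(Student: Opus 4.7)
The plan is to pass to a weakly converging subsequence, introduce the Young measure $\nu_x$ generated by $D\U_n$, use the hypothesis to force $\spt\nu_x\subset C_\A$ a.e., and then prove that $\nu_x$ is actually a Dirac mass for a.e.\ $x$ by combining the weak continuity of null Lagrangians with Proposition \ref{alg} and Theorem \ref{MAIN}. Once this rigidity is established, Corollary \ref{strongc} delivers strong convergence of $D\U_n$ in $L^{\bar p}$ for every $\bar p<p$, which together with Rellich gives the asserted $W^{1,\bar p}$ convergence and the pointwise inclusion $D\U\in C_\A$ a.e.

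To set up, I would first extract a subsequence so that $\U_n\rightharpoonup\U$ weakly in $W^{1,p}$ and $D\U_n$ generates a Young measure $\nu_x$. The hypothesis, together with Corollary \ref{Youngp} applied to the linearly growing function $Y\mapsto\dist(Y,C_\A)$ ($q=1<p$), gives $\int\dist(Y,C_\A)\,d\nu_x(Y)=0$, hence $\spt\nu_x\subset C_\A$ a.e. Since $C_\A$ is the graph of $X\mapsto(X,A(X),B(X))$, the measure $\nu_x$ is the push-forward under this map of a measure $\tilde{\nu}_x$ on $\R^{n\times 2}$; writing $\U=(u,v,w)$ and invoking \eqref{exp}, I identify $Du(x)=\bar X(x):=\int X\,d\tilde{\nu}_x$ and $Dw(x)=\bar W(x):=\int B(X)\,d\tilde{\nu}_x$. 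Exploiting $p>2$ to have weak continuity of Jacobians of pairs of $W^{1,p}$ components, combined with Corollary \ref{Youngp} for quadratic integrands ($q=2<p$) and identity \eqref{linearalg}, I obtain the two null Lagrangian identities
\[
\int\langle X,A(X)J\rangle\,d\tilde{\nu}_x=\langle Du,Dv\,J\rangle,\qquad \int\det B(X)\,d\tilde{\nu}_x=\det\bar W(x),
\]
the second of which, combined with Lemma \ref{Bprop}\eqref{1}, yields $\det\bar W(x)=1$ a.e.

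Using the $2\times 2$ expansion $\det(M-N)=\det M+\det N-\langle M,\cof N\rangle$ and the linearity of $\cof$ on $2\times 2$ matrices, the previous identities combine into
\[
\int\!\!\int\det(B(X)-B(Y))\,d\tilde{\nu}_x(X)\,d\tilde{\nu}_x(Y)=2-\langle\bar W,\cof\bar W\rangle=2-2\det\bar W=0.
\]
By Proposition \ref{alg} the integrand is pointwise $\leq 0$, so it must vanish $\tilde{\nu}_x\otimes\tilde{\nu}_x$-a.e.; applied on bounded sets, the strict inequality of Proposition \ref{alg} then forces $B$ to be constant on $\spt\tilde{\nu}_x$, equal to $\bar W(x)$. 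Feeding this into Theorem \ref{MAIN} with $k=\|\bar W(x)\|$ (finite a.e.), the middle term disappears and I obtain the clean coercive estimate $-\langle(A(X)-A(Y))J,X-Y\rangle\geq\delta\|X-Y\|^2$ for $(X,Y)\in\spt\tilde{\nu}_x\times\spt\tilde{\nu}_x$. Integrating against $\tilde{\nu}_x\otimes\tilde{\nu}_x$, the left-hand side integrates to zero by the first null Lagrangian identity above, giving $\int\!\!\int\|X-Y\|^2\,d\tilde{\nu}_x\,d\tilde{\nu}_x=0$; hence $\tilde{\nu}_x=\delta_{\bar X(x)}$ and $\nu_x=\delta_{D\U(x)}$ a.e., so Corollary \ref{strongc} finishes the proof.

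The main obstacle is that the middle term $C\|B(X)-B(Y)\|\min(\|X\|,\|Y\|)\|X-Y\|$ in Theorem \ref{MAIN} cannot be absorbed into $\delta\|X-Y\|^2$ by Young's inequality alone, because $\min(\|X\|,\|Y\|)$ is only $L^p$-integrable against $\tilde{\nu}_x$, not uniformly bounded. The algebraic rigidity of Proposition \ref{alg}, triggered by the null Lagrangian cancellation $\det\bar W=1$, is precisely what resolves this: it collapses $\spt\tilde{\nu}_x$ into a single level set of $B$, on which the obstructing term simply vanishes and Theorem \ref{MAIN} can be applied without any further localization in $X$.
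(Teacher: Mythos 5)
Your argument is correct and follows essentially the same route as the paper: Young measures with support forced into $C_\A$, weak continuity of the relevant minors for $p>2$, Proposition \ref{alg} collapsing the $B$-component to a constant on the support, and then Theorem \ref{MAIN} (with the middle term vanishing) plus Corollary \ref{strongc} to get the Dirac mass and strong convergence. Working with the projected measure $\tilde{\nu}_x$ and deriving the two commutation identities directly is only a presentational variant of the paper's choice of coefficients $t_{ab}$ in \eqref{lincomb}, so there is nothing substantively different to flag (apart from a harmless $\cof$ versus $\cof^T$ slip in the $2\times 2$ expansion).
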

\begin{proof}
Throughout the proof, we will use the splitting
\begin{equation}\label{split}
\Lambda=
\left(
\begin{array}{c}
\Lambda_1\\
\Lambda_2\\
\Lambda_3
\end{array}
\right),\; \Lambda_1, \Lambda_2  \in \R^{n\times 2}, \Lambda_3 \in \R^{2\times 2}
\end{equation}
for every $\Lambda \in \R^{(2n + 2)\times 2}$. We can assume that $\U_n$ converges weakly in $W^{1,p}$ to $\U$, and that $D\U_n$ converges in the sense of Young measures to $\{\nu_x\}_x$. We claim that, for almost every $x\in \Omega$, we have
\begin{enumerate}[(i)]
\item  $\spt(\nu_x) \subseteq C_\A$;\label{supp}
\item $\int_{\R^{2n + 2}}\det(\Lambda^{ab})d\nu_x(\Lambda) = \det\left(\left(\int_{\R^{2n + 2}}\Lambda d\nu_x\right)^{ab}\right)$, $\forall 1\le a\le b \le 2n + 2 $.\label{DET}
\end{enumerate}

To prove the previous claim, it just suffices to apply the definition of Young measure generated by $\U_n$. Indeed to show \eqref{supp} consider the function $f \in C(\R^{(2n + 2)\times 2})$ defined as $f(\Lambda) \doteq \dist(\Lambda,C_\A)$. The proof of \eqref{DET} is analogous to the one given in \cite[Theorem 1]{SAP}.
Moreover, using the equality
\begin{equation}\label{sumdet}
\det(M_1+ M_2) = \det(M_1) + \det(M_2) + \langle M_1,\cof^T(M_2)\rangle,
\end{equation}
valid for every matrices $M_1,M_2 \in \R^{2\times 2}$, and $\eqref{DET}$ of the previous claim, it is easy to see that
\[
\int_{\R^{(2n + 2)\times 2}\times \R^{(2n + 2)\times 2}}\det((\Lambda - \Gamma)^{ab})d(\nu_x(\Lambda)\otimes \nu_x(\Gamma)) = 0\; \text{ for a.e. }x\in \Omega,
\]
where $\nu_x\otimes \nu_x$ denotes the standard product measure constructed with $\nu_x$. Clearly this implies that for any collection of numbers $t_{ab} \in \R$,
\begin{equation}\label{lincomb}
\sum_{1\le a\le b \le 2n + 2}t_{ab}\int_{\R^{(2n + 2)\times 2}\times \R^{(2n + 2)\times 2}}\det((\Lambda -\Gamma)^{ab})d(\nu_x(\Lambda)\otimes \nu_x(\Gamma)) = 0.
\end{equation}
First, we choose $t_{ab} = 0$ for every $1\le a \le b \le 2n$ and $t_{ab} = 1$ if $a = 2n + 1, b = 2n + 2$. Using $\eqref{supp}$ of the claim and \eqref{algg}, we infer that $\nu_x\otimes \nu_x$ is supported in the set of matrices
\[
C_\A\times C_\A \cap \{(\Lambda',\Lambda'') \in \R^{(2n + 2)\times 2}\times \R^{(2n + 2)\times 2}: \Lambda'_3 = \Lambda''_3\}.
\]
Thus, we obtain the existence of a $2\times 2$ matrix $B_x$ such that  $B(\Lambda_1) = B_x$ for a.e. $x \in \Omega$ and for $\nu_x$-a.e. $\Lambda\in \R^{2n + 2}$. Let us remark that the matrix $B_x$ possibly depends on $x \in \Omega$ but not on $\Lambda \in \R^{(2n + 2)\times 2}$. To finish the proof, apply \eqref{linearalg} to find coefficients $t_{ab}$ such that
\[
\sum_{1\le a\le b \le 2n + 2}t_{ab}\det((\Lambda -\Gamma)^{ab}) = \langle (A(\Lambda_1) - A(\Gamma_1))J,\Lambda_1 - \Gamma_1\rangle, \forall \Lambda,\Gamma \in \R^{(2n + 2)\times 2}.
\]
Now we can use \eqref{sotto} to infer that for a.e. $x \in \Omega$, there exists a number $\delta(x) > 0$
\begin{align*}
0 &= \int_{\R^{(2n + 2)\times 2}\times \R^{(2n + 2)\times 2}} \langle (A(\Lambda_1) - A(\Gamma_1))J,\Lambda - \Gamma\rangle d(\nu_x(\Lambda)\otimes \nu_x(\Gamma)) \\
&\ge \int_{\R^{(2n + 2)\times 2}\times \R^{(2n + 2)\times 2}} \delta(x)\|\Lambda_1 - \Gamma_1\|^2 d(\nu_x(\Lambda)\otimes \nu_x(\Gamma)).
\end{align*}

This yields $\nu_x = \delta_{D\U(x)}$ for a.e. $x \in \Omega$. Corollary \ref{strongc} implies that $D \U_n$ converges in measure to $D\U$ and therefore strongly for every $1\le \bar p < p$.

\end{proof}

\section{Perturbative result}\label{PERT}

We will prove that solutions with fixed Lipschitz constant of the differential inclusion $\eqref{diffincF}$ for functionals sufficiently near to the area functional are actually as smooth as the functional under consideration. The strategy is the following. In Lemma \ref{pertar}, we prove inequality \eqref{R}, through which we bound the norm of the difference of two matrices with a linear combination of subdeterminants of $C_{\A}$. Next, in Lemma \ref{pertlem}, we show that, if we fix $R > 0$, there exists a number $\varepsilon(R) > 0$ such that, if $f: \R^{n\times 2}\to\R$ is a $C^2$ functional with $\|f - \A\|_{C^2(B_{2R})} \le \varepsilon(R)$, then for $f$ the same kind of inequality holds (see \eqref{genf}). In Theorem \ref{pert} and Proposition \ref{C1gamma}, we show how inequality \eqref{genf} implies H\"older continuity of gradients of functions $\mathcal{U}$ satisfying
\[
D \U(x) \in C_f, \text{ for a.e. }x\in \Omega. 
\]
Finally, in Subsection \ref{mor}, we will improve the H\"older continuity of the gradient of the solution to higher regularity.

\begin{lemma}\label{pertar}
For every $R >0$, there exist constants $\lambda(R),\delta(R) >0$ such that, $\forall X,Y \in B_{\frac{3R}{2}}(0)$, we have
\begin{equation}\label{R}
-\langle (A(X)-A(Y))J,X - Y\rangle - \lambda\det(B(X) - B(Y)) \ge \delta\|X - Y\|^2,
\end{equation}
\end{lemma}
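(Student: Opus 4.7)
The plan is to combine Theorem \ref{MAIN}, which provides an inequality with essentially the right shape except that the penalty term is a mixed product $\|B(X) - B(Y)\| \min\{\|X\|,\|Y\|\} \|X-Y\|$ rather than $-\det(B(X)-B(Y))$, with Proposition \ref{alg}, which lets us replace $\|B(X)-B(Y)\|^2$ by $-\mu^{-1}\det(B(X)-B(Y))$. The bridge between the two is a single application of the weighted Young inequality.

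Concretely, since $X,Y\in B_{3R/2}(0)$, Lemma \ref{basicest}\eqref{BEST} gives $\|B(X)\|,\|B(Y)\|\le 2+3R=:k(R)$, so Theorem \ref{MAIN} is available with constants $C=C(k(R))$ and $\delta_0=\delta(k(R))$, yielding
\[
-\langle (A(X)-A(Y))J, X-Y\rangle + C\|B(X)-B(Y)\|\min\{\|X\|,\|Y\|\}\|X-Y\|\ge \delta_0\|X-Y\|^2.
\]
I would then bound $\min\{\|X\|,\|Y\|\}\le 3R/2$ and apply the weighted Young inequality so that
\[
\tfrac{3CR}{2}\|X-Y\|\|B(X)-B(Y)\|\le \tfrac{\delta_0}{2}\|X-Y\|^2 + C'(R)\|B(X)-B(Y)\|^2
\]
with $C'(R)=9C^2R^2/(8\delta_0)$. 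Absorbing the first summand into the right-hand side of the previous display gives
\[
-\langle (A(X)-A(Y))J, X-Y\rangle + C'(R)\|B(X)-B(Y)\|^2\ge \tfrac{\delta_0}{2}\|X-Y\|^2.
\]

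Finally, Proposition \ref{alg} applied with radius $3R/2$ produces $\mu=\mu(R)>0$ with $\|B(X)-B(Y)\|^2\le -\mu^{-1}\det(B(X)-B(Y))$. Substituting this estimate into the last inequality immediately yields \eqref{R} with $\lambda=C'(R)/\mu$ and $\delta=\delta_0/2$. There is no real obstacle: once the two earlier inequalities are in hand, the argument is a mechanical chaining, and the only quantitative choice is that of the Young parameter, which is forced by the requirement of absorbing $\tfrac{\delta_0}{2}\|X-Y\|^2$ into the right-hand side.
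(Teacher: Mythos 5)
Your proposal is correct and follows essentially the same route as the paper: invoke Theorem \ref{MAIN} on $B_{3R/2}(0)$, bound $\min\{\|X\|,\|Y\|\}$ by $3R/2$, absorb the mixed term via a weighted Young inequality, and then convert $\|B(X)-B(Y)\|^2$ into $-\mu^{-1}\det(B(X)-B(Y))$ using Proposition \ref{alg}. The only (harmless) difference is that you explicitly verify the hypothesis $\max\{\|B(X)\|,\|B(Y)\|\}\le k(R)$ via Lemma \ref{basicest}, which the paper leaves implicit, and your choice of Young parameter is an equivalent reparametrization of the paper's $\tau$.
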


\begin{proof}
We note that for $(X,Y) \in B_\frac{3R}{2}(0)\times B_\frac{3R}{2}(0)$ the assumptions of Theorem \ref{MAIN} are fulfilled. Therefore, we find constants $C = C(R)$ and $c = c(R)$ such that
\[
-\langle (A(X) - A(Y))J,X - Y \rangle + C\|B(X) - B(Y)\|\min\{\|Y\|,\|X\|\}\|X - Y\| \ge c\|X - Y\|^2.
\]
Using the hypothesis, we estimate $\min\{\|Y\|,\|X\|\} \le \max\{\|Y\|,\|X\|\} \le \frac{3R}{2}$. Moreover Young inequality yields
\[
-\langle (A(X) - A(Y))J,X - Y \rangle + \frac{3CR\tau}{4}\|X - Y\|^2 + \frac{3CR}{4\tau}\|B(X) - B(Y)\|^2 \ge c\|X - Y\|^2.
\]
Clearly, we can choose $\tau = \tau(R)$ such that $c - \frac{3CR\tau}{4} \ge \frac{c}{2}$. Therefore, define $\delta\doteq \frac{c}{2}$. Finally by \eqref{algg} we find a constant $\mu = \mu(R) \ge 0$ such that
\[
\|B(X) - B(Y)\|^2 \le - \frac{1}{\mu}\det(B(X) - B(Y)), \forall X,Y\in B_\frac{3R}{2}(0).
\]
This finally concludes the proof of the present Lemma, with $\lambda(R) \doteq \frac{3CR}{4\tau\mu}$.
\end{proof}

\begin{rem}
Notice that inequality \eqref{R} can be interpreted as some sort of "generalized convexity" of the area functional. Indeed, for a function $f \in C^2(\R^{n\times 2})$, the inequality
\[
\langle Df(X)-Df(Y),X - Y\rangle = -\langle (A_f(X)-A_f(Y))J,X - Y\rangle \ge \delta \|X - Y\|^2
\]
is equivalent to convexity. It can be checked that when $n > 1$, the area functional is not convex, hence the previous inequality cannot hold. The previous Lemma shows that adding the term $-\lambda\det(B(X) - B(Y))$ we can nonetheless bound from above the quantity $\|X - Y\|^2$. The key point here is that the determinant is a null Lagrangian and therefore it still allows to prove a regularity result as Proposition \ref{C1gamma}.
\end{rem}

\begin{lemma}\label{pertlem}
Fix $R>0$. Recall that $A_f(X) = Df(X)J$ and $B_f(X) = X^TDf(X)J - f(X)J$. There exists $\varepsilon = \varepsilon(R)$ and $c = c(f,R)>0$ such that if 
\[
\|f - \A\|_{C^2(B_{2R})}\le\varepsilon,
\]
then, for the same constant $\lambda$ of formula \eqref{R},
\begin{equation}\label{genf}
-\langle (A_f(X)-A_f(Y))J,X - Y\rangle - \lambda\det(B_f(X) - B_f(Y)) \ge c\|X - Y\|^2, \text{for every }X,Y \in B_\frac{3R}{2}(0).
\end{equation}
\end{lemma}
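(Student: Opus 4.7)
The plan is to take inequality \eqref{R} of Lemma \ref{pertar} as a baseline and show that the left-hand side of \eqref{genf} differs from its counterpart for $\A$ by at most $C(R)\varepsilon\|X - Y\|^2$; the claim then follows by choosing $\varepsilon$ small relative to $\delta(R)$. Writing $g \doteq f - \A$ with $\|g\|_{C^2(B_{2R}(0))} \le \varepsilon$, the key observation is that $A_f - A = Dg \cdot J$ and $B_f(X) - B(X) = X^T Dg(X) J - g(X) J$ are both $C^1$-small on $B_{3R/2}(0)$: their $C^1$-norms are controlled by $C(R)\|g\|_{C^2} \le C(R)\varepsilon$, the factor $R$ appearing because of the explicit $X^T$ in the definition of $B_f$.

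Next, I would decompose the left-hand side of \eqref{genf} as the left-hand side of \eqref{R} plus an error term
\begin{equation*}
E(X,Y) \doteq -\langle ((A_f - A)(X) - (A_f - A)(Y))J,\, X - Y\rangle - \lambda\bigl[\det(B_f(X) - B_f(Y)) - \det(B(X) - B(Y))\bigr],
\end{equation*}
and show $|E(X,Y)| \le C(R)(1 + \lambda)\varepsilon\|X - Y\|^2$. For the first summand, the mean value theorem applied on the convex set $B_{3R/2}(0)$ yields $|(A_f - A)(X) - (A_f - A)(Y)| \le C(R)\varepsilon\|X - Y\|$, so this contribution is bounded by $C(R)\varepsilon\|X - Y\|^2$.

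For the determinant summand, I would apply the identity \eqref{sumdet} in the form $\det(M + H) - \det(M) = \det(H) + \langle H, \cof^T(M)\rangle$ with $M \doteq B(X) - B(Y)$ and $H \doteq (B_f - B)(X) - (B_f - B)(Y)$. Smoothness of $B$ on $B_{3R/2}(0)$ and the $C^1$ bound on $B_f - B$ give $\|M\| \le C(R)\|X - Y\|$ and $\|H\| \le C(R)\varepsilon\|X - Y\|$; together with $\|\cof(M)\| = \|M\|$ (true for $2\times 2$ matrices) and $|\det(H)| \le \tfrac{1}{2}\|H\|^2$, both the cross term and the quadratic term are bounded by $C(R)\varepsilon\|X - Y\|^2$ whenever $\varepsilon \le 1$. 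Combined with \eqref{R}, this yields
\[
-\langle (A_f(X) - A_f(Y))J,\, X - Y\rangle - \lambda\det(B_f(X) - B_f(Y)) \ge \bigl(\delta(R) - C(R)(1 + \lambda)\varepsilon\bigr)\|X - Y\|^2,
\]
and choosing $\varepsilon(R)$ so that $C(R)(1 + \lambda(R))\varepsilon \le \tfrac{\delta(R)}{2}$ concludes the proof with $c \doteq \tfrac{\delta(R)}{2}$.

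No deep obstacle arises: the argument is a routine perturbation around the structural inequality \eqref{R}. The only point demanding care is to ensure the $C^1$ smallness of $B_f - B$ is genuinely controlled by $\|f - \A\|_{C^2}$; since differentiating $B_f$ picks up the product $X \cdot D^2 f$, the $R$-dependence of the constant is unavoidable, and the restriction to the smaller ball $B_{3R/2}(0) \subset B_{2R}(0)$ is essential so that the $C^2$-closeness hypothesis is in force throughout.
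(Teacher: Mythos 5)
Your argument is correct, but it proceeds differently from the paper. The paper proves Lemma \ref{pertlem} by contradiction and compactness: it takes sequences $f_n \to \A$ in $C^2(B_{2R})$, $c_n \to 0$ and matrices $X_n, Y_n$ violating \eqref{genf}, and splits into the case $X \neq Y$ (where the limit directly contradicts \eqref{R}) and the delicate case $X = Y$, where it divides by $\|X_n - Y_n\|$, passes to the limit in the difference quotients to obtain the infinitesimal version $-\langle DA(X)[Z]J,Z\rangle - \lambda\det(DB(X)[Z]) \le 0$, and contradicts the corresponding limit of \eqref{R}. Your proof is instead a direct quantitative perturbation: you isolate the error term $E(X,Y)$, control $A_f - A$ and $B_f - B$ in $C^1(B_{3R/2})$ by $C(R)\varepsilon$, and expand the determinant via \eqref{sumdet} with $M = B(X)-B(Y)$, $H = (B_f-B)(X)-(B_f-B)(Y)$, using $\|\cof(M)\| = \|M\|$ and $|\det H| \le \tfrac12\|H\|^2$ to get $|E| \le C(R)(1+\lambda)\varepsilon\|X-Y\|^2$; absorbing this into $\delta(R)$ closes the argument. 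All the estimates check out (the Lipschitz bounds for $B$ and for $X \mapsto X^TDg(X)J - g(X)J$ on the convex ball $B_{3R/2}\subset B_{2R}$ are exactly as you say). What your route buys is an explicit, effective choice of $\varepsilon(R)$ and a constant $c = \delta(R)/2$ depending only on $R$ rather than on $f$ (slightly stronger than the statement), and it avoids any limiting/compactness step; what the paper's route buys is brevity of setup, since no Lipschitz bounds on $B$, no cofactor expansion and no tracking of constants are needed — the only price being the separate treatment of the degenerate case $X = Y$ via directional derivatives.
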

\begin{proof}
The proof is by contradiction. Assume we can find a sequence of functions $f_n$, a sequence of numbers $c_n$ and sequences of matrices $X_n$ and $Y_n$ such that
\begin{enumerate}[(i)]
\item $\|f_n - \A\|_{C^2(B_{2R})}\le\frac{1}{n}$;\label{111}
\item $c_n \to 0$;\label{222}
\item $X_n \to X$, $Y_n \to Y$, $\frac{X_n - Y_n}{\|X_n - Y_n\|} \to Z$;\label{333}
\item $-\langle (A_{f_n}(X_n)-A_{f_n}(Y_n))J,X_n - Y_n\rangle - \lambda\det(B_{f_n}(X_n) - B_{f_n}(Y_n)) \le c_n\|X_n - Y_n\|^2$.\label{444}
\end{enumerate}

First, suppose $X \neq Y$. Then, in the limit we find a contradiction with $\eqref{R}$
\[
\delta\|X - Y\|^2 \le -\langle (A(X)-A(Y))J,X - Y\rangle - \lambda\det(B(X) - B(Y)) \le 0.
\]
Now suppose $X = Y$. Define 
\[
T_n \doteq \frac{A_{f_n}(X_n)-A_{f_n}(Y_n)}{\|X_n - Y_n\|} \text{ and } B_n \doteq \frac{B_{f_n}(X_n) - B_{f_n}(Y_n)}{\|X_n - Y_n\|}.
\]
Then, for every $n$, \eqref{444} yields:
\begin{equation}\label{contra}
-\left\langle T_nJ,\frac{X_n - Y_n}{\|X_n - Y_n\|}\right\rangle - \lambda\det(B_n) \le 0.
\end{equation}
We have
\[
T_n = \frac{A_{f_n}(X_n)-A_{f_n}(Y_n)}{\|X_n - Y_n\|} = \frac{\int_0^1DA_{f_n}(tX_n + (1-t)Y_n)[X_n - Y_n]dt}{\|X_n - Y_n\|} \to DA(X)[Z]
\]
and, analogously,
\[
B_n \to DB(X)[Z].
\]
The convergence of $T_n$ and $B_n$ are a direct consequence of \eqref{111}. Consequently, in the limit \eqref{contra} becomes
\begin{equation}\label{contra2}
-\langle DA(X)[Z]J,Z\rangle - \lambda\det(DB(X)[Z])  \le 0.
\end{equation}
Now, by $\eqref{R}$ and for every $n$,
\[
-\langle (A(X_n)-A(Y_n))J,X_n - Y_n\rangle -\lambda\det(B(X_n) - B(Y_n)) \ge \delta\|X_n - Y_n\|^2,
\]
so that, if we divide by $\|X_n - Y_n\|^2$ and pass to the limit, we obtain a contradiction with $\eqref{contra2}$.
\end{proof}

\begin{Teo}\label{pert}
Let $k \ge 2$. For every $R>0$, there exists $\varepsilon = \varepsilon(R)>0$ for which, if $f:\R^{n\times 2}\to \R$ is a function of class $C^k$ with
\[
\|f - \A\|_{C^2(B_{2R})}\le\varepsilon,
\]
then, for every $\U\in W^{1,\infty}(\Omega;\R^{2n + 2})$, $\|D\U\|_{L^\infty}\le R$, such that
\[
D\U(x) \in C_f \text{ for a.e. }x \in \Omega,
\]
it holds $\U \in W^{2,2 + \rho}(\Omega)$, for some positive $\rho$.
\end{Teo}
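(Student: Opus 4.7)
The strategy is to extract from inequality \eqref{genf} a Caccioppoli-type estimate on the second derivatives of $u$, upgrade it via the two-dimensional Sobolev--Poincar\'e inequality to a reverse H\"older inequality, and then invoke Gehring's lemma. Regularity of $v, w$ will follow from the pointwise identities $Dv = A_f(Du)$, $Dw = B_f(Du)$ and the Sobolev chain rule, available since $f \in C^k$ with $k \ge 2$ and $\|D\U\|_\infty \le R$.

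As a first step I would establish $\U \in W^{2,2}_{\loc}(\Omega)$ by difference quotients. Fix a coordinate direction $e_s$ and write $\tau_h\varphi(x) = \varphi(x+he_s) - \varphi(x)$. Applying Lemma \ref{pertlem} with $X = Du(x+he_s)$, $Y = Du(x)$, both in $\overline{B_R}$, and using the differential inclusion to identify $\tau_h Dv = \tau_h(A_f(Du))$ and $\tau_h Dw = \tau_h(B_f(Du))$, we get pointwise
\[
c\,|\tau_h Du|^2 \le -\langle \tau_h Dv\, J,\, \tau_h Du\rangle - \lambda \det(\tau_h Dw).
\]
Multiplying by $\eta^2$ for $\eta \in C^\infty_c(\Omega)$ and integrating, \eqref{linearalg} turns the first term into $\sum_i\int\det(\nabla\tau_h u^i,\nabla\tau_h v^i)\eta^2\dx$; each summand and $\int\det(D\tau_h w)\eta^2\dx$ are null Lagrangians, and the distributional identity $\det(\nabla\phi,\nabla\psi) = \partial_1(\phi\partial_2\psi)-\partial_2(\phi\partial_1\psi)$ (valid for $W^{1,2}$ factors) allows integration by parts. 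Using $|\tau_h Dv|,|\tau_h Dw|\le L|\tau_h Du|$ (from $A_f,B_f\in C^1(\overline{B_{2R}})$) and Young's inequality to absorb $|\tau_h Du|$ on the left, we obtain, for arbitrary constants $c_u, c_w$,
\[
\int|\tau_h Du|^2\eta^2\dx \le C\int\bigl(|\tau_h u - c_u|^2+|\tau_h w - c_w|^2\bigr)|\nabla\eta|^2\dx.
\]
Choosing $c_u=c_w=0$, using $|\tau_h u|,|\tau_h w|\le R|h|$, dividing by $h^2$ and letting $h \to 0$ yields $\partial_s Du \in L^2_{\loc}$, hence $\U \in W^{2,2}_{\loc}(\Omega)$.

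With $W^{2,2}$ in hand, dividing the pointwise inequality by $h^2$ and passing $h \to 0$ (using a.e.\ differentiability of Sobolev functions) produces almost everywhere
\[
c\,|\partial_s Du|^2 \le -\langle \partial_s Dv\, J,\, \partial_s Du\rangle - \lambda\det(\partial_s Dw).
\]
Integrating against $\eta^2$ and repeating the null-Lagrangian integration by parts with averages $c_u = (\partial_s u)_{B_r}$, $c_w = (\partial_s w)_{B_r}$ and a cutoff $\eta\equiv 1$ on $B_{r/2}(x_0)$, $|\nabla\eta|\le C/r$, we reach
\[
\int_{B_{r/2}}|\partial_s Du|^2\dx \le \frac{C}{r^2}\int_{B_r}\bigl(|\partial_s u - (\partial_s u)_{B_r}|^2 + |\partial_s w - (\partial_s w)_{B_r}|^2\bigr)\dx.
\]
The two-dimensional Sobolev--Poincar\'e inequality at the dual exponent $q = 4/3$, namely $\|g-(g)_{B_r}\|_{L^2(B_r)}\le Cr^{1/2}\|\nabla g\|_{L^{4/3}(B_r)}$, combined with $|\nabla\partial_s w|\le L|\nabla\partial_s u|$, converts this, after normalizing to averages, into the reverse H\"older inequality
\[
\avint_{B_{r/2}}|\partial_s Du|^2\dx \le C\left(\avint_{B_r}|\partial_s Du|^{4/3}\dx\right)^{3/2}.
\]
Gehring's lemma then provides $\rho > 0$ such that $\partial_s Du \in L^{2+\rho}_{\loc}(\Omega)$ for $s = 1,2$, hence $D^2 u \in L^{2+\rho}_{\loc}$. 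The Sobolev chain rule applied to $Dv = A_f(Du)$ and $Dw = B_f(Du)$, together with $\|DA_f\|_{L^\infty(B_R)}, \|DB_f\|_{L^\infty(B_R)} < \infty$, yields $D^2 v, D^2 w \in L^{2+\rho}_{\loc}$, establishing $\U \in W^{2,2+\rho}_{\loc}(\Omega)$.

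The main obstacle will be the correct calibration of the Caccioppoli-to-reverse-H\"older chain: the exponent $q = 4/3$ is dictated by the two-dimensional Sobolev embedding so that the $r^{1/2}$ gained in Sobolev--Poincar\'e absorbs the $r^{-2}$ of Caccioppoli after normalization, and one must carefully justify the null-Lagrangian integration by parts for merely Sobolev factors. With these points settled, Young's absorption, Gehring's lemma, and the chain rule for $v, w$ complete the argument.
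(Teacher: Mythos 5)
Your proposal is correct and follows essentially the same route as the paper, which packages the argument as Proposition \ref{C1gamma} (a variant of \v Sver\'ak's Theorem 3): the coercivity inequality \eqref{genf} from Lemma \ref{pertlem}, the null-Lagrangian integration by parts applied to difference quotients, the resulting Caccioppoli estimate giving $W^{2,2}_{\loc}$, a Sobolev--Poincar\'e step producing a reverse H\"older inequality, and Gehring's lemma. The only minor differences are that the paper keeps the difference-quotient form all the way through the reverse H\"older inequality (so every factor stays Lipschitz and the integration by parts needs no extra justification, unlike your passage to genuine second derivatives, which you rightly flag) and uses the exponent pair $(2,1)$ rather than your $(2,4/3)$; both choices are admissible for Gehring.
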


The proof of the previous Theorem is a consequence of the following result, that in turn is a simple generalization of \cite[Theorem 3]{SAP}.

\begin{prop}\label{C1gamma}
Consider differential inclusions of the following form, for $\mathcal{V} \in W^{1,\infty}_{\loc}(\Omega;\R^{r + m})$,
\begin{equation}\label{diffinc}
D \mathcal{V}(x) \in C = \left\{Y\in\R^{r+ m,2}: Y=
\left(
\begin{array}{c}
X\\
F(X)
\end{array}
\right)
\right\}, \text{ for a.e. } x \in \Omega,
\end{equation}
where $F \in C^k(\R^{r\times 2};\R^{m\times 2})$, $k\ge 1$. Consider moreover the splitting $\mathcal{V} = \left(\begin{array}{c}u \\ v \end{array}\right)$, with $u: \Omega \to \R^r$ and $v:\Omega \to \R^m$. Suppose there exist constants $c_{ab} \in \R$ such that
\begin{equation}\label{reg}
\|X - Y\|^2 \le \sum_{1\le a\le b \le m + r}c_{ab}\det(M^{ab} - N^{ab}),
\end{equation}
for every couple of $M,N \in K$ of the form
\[
M =
\left(
\begin{array}{c}
X\\
F(X)
\end{array}
\right), \quad
N =
\left(
\begin{array}{c}
Y\\
F(Y)
\end{array}
\right).
\]
Then, $u \in W^{2,2+\rho}_{\loc}(\Omega;\R^n)$, for some $\rho >0$.
\end{prop}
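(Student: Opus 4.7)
The plan is to follow \v Sver\'ak's strategy in the proof of \cite[Theorem 3]{SAP} essentially verbatim, adapting it from his setting ($r = 2$, $m = 0$) to the present framework of graph-valued differential inclusions. The key input is inequality \eqref{reg}, whose right-hand side is a finite linear combination of $2\times 2$ subdeterminants of the gradient $D\mathcal{V}$; crucially, each such subdeterminant is a null Lagrangian on $\R^2$, and this is what permits converting a pointwise control on $|Du(x)-Du(y)|^2$ into a Caccioppoli-type estimate.

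I would first apply \eqref{reg} pointwise with $M = D\mathcal{V}(x)$ and $N = D\mathcal{V}(y)$ (both lie in the graph $K$ for a.e.\ $x,y\in\Omega$), obtaining
\begin{equation*}
|Du(x) - Du(y)|^{2} \le \sum_{1 \le a \le b \le r+m} c_{ab}\, \det\bigl((D\mathcal{V})^{ab}(x) - (D\mathcal{V})^{ab}(y)\bigr)\quad\text{for a.e.\ } x,y\in\Omega.
\end{equation*}
Then, fixing concentric balls $B_\rho \Subset B_R \Subset \Omega$ and a smooth cutoff $\eta \in C_c^\infty(B_R)$ with $\eta\equiv 1$ on $B_\rho$ and $|D\eta| \le C(R-\rho)^{-1}$, I would integrate the pointwise bound against $\eta^2(x)\eta^2(y)\,dx\,dy$. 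The standard variance identity shows that the left-hand side is bounded below by a multiple of $\int_{B_\rho}|Du - (Du)_\rho|^2\,dx$, while on the right, for each fixed $y$, the integrand in $x$ is the Jacobian of the planar map $x \mapsto (\mathcal{V}^a(x) - L_y^a(x),\, \mathcal{V}^b(x) - L_y^b(x))$, with $L_y$ the affine function satisfying $DL_y = D\mathcal{V}(y)$. The null Lagrangian structure permits integration by parts in $x$, transferring derivatives onto $\eta^2(x)$ and producing terms of the form $\int D\eta^2\,(\mathcal{V}^{a} - L_y^a)\,\partial \mathcal{V}^b$; a symmetric integration by parts in $y$ produces the Poincar\'e-type remainders $\mathcal{V}(x) - \mathcal{V}(y) - D\mathcal{V}(y)(x - y)$.

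Using Cauchy--Schwarz together with the $L^\infty$ bound on $D\mathcal{V}$ (which follows from $\|Du\|_\infty \le R$, the graph relation $Dv = F(Du)$, and $F\in C^1$), and the two-dimensional Sobolev--Poincar\'e inequality applied to the Poincar\'e-type remainders, I would deduce the reverse-H\"older inequality
\begin{equation*}
\left(\,\avint_{B_\rho}|Du - (Du)_\rho|^2\,dx\right)^{1/2} \le C\left(\,\avint_{B_R}|Du - (Du)_R|^p\,dx\right)^{1/p}
\end{equation*}
for some $p \in [1, 2)$ and some constant $C$ depending on the ratio $R/(R-\rho)$. Gehring's lemma then provides the existence of $\rho > 0$ with $Du \in W^{1, 2+\rho}_{\loc}(\Omega; \R^{r\times 2})$, which is exactly the desired conclusion $u \in W^{2, 2+\rho}_{\loc}(\Omega; \R^r)$.

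The main technical obstacle is the bookkeeping of the integrations by parts in the previous step: the null Lagrangian structure of each determinant appearing on the right-hand side of \eqref{reg} must be exploited so that, after Cauchy--Schwarz, all resulting boundary and interior terms can be absorbed into quantities with the correct scaling in $R - \rho$. The graph hypothesis on $K$ plays an essential role, since it ensures that for $a > r$ the component $\mathcal{V}^a = F^{a-r}(Du)$ and its derivatives are controlled in Lebesgue norms by those of $Du$, so the final estimate can be closed with respect to $Du$ alone.
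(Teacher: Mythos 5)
Your overall architecture (pointwise use of \eqref{reg} on two elements of the graph, the null Lagrangian structure of the $2\times 2$ subdeterminants, a Caccioppoli-type estimate, Sobolev--Poincar\'e, and Gehring) is the right one, and you correctly identify that the graph relation $Dv = F(Du)$ with $F$ locally Lipschitz lets the estimate be closed in terms of $Du$ alone. However, there is a genuine gap at the final step, and it is structural rather than a matter of bookkeeping. You apply \eqref{reg} to the differences $D\mathcal{V}(x)-D\mathcal{V}(y)$ at two points and arrive at a reverse H\"older inequality for the mean oscillation of $Du$, namely $\bigl(\fint_{B_\rho}|Du-(Du)_{B_\rho}|^2\bigr)^{1/2}\le C\bigl(\fint_{B_R}|Du-(Du)_{B_R}|^{p}\bigr)^{1/p}$ with $p<2$. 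Gehring's lemma applied to such an inequality can only improve the integrability of the quantity that appears in it, i.e.\ of $|Du-(Du)_{B}|$, which is vacuous here because $D\mathcal{V}\in L^\infty_{\loc}$ already; it gives no information whatsoever on $D^2u$, and in particular does not yield $Du\in W^{1,2+\rho}_{\loc}$. Indeed, every term in your final estimate is finite for an arbitrary Lipschitz map, so no derivative is gained anywhere in the argument; you also never establish the preliminary fact $u\in W^{2,2}_{\loc}$, without which the conclusion $D^2u\in L^{2+\rho}_{\loc}$ is not even meaningful.

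The missing mechanism is the one used in the paper: difference quotients carrying the $1/\|h\|$ normalization. Setting $g^h(x)=(g(x+h)-g(x))/\|h\|$ and $w_{ab}=(\mathcal{V}_a,\mathcal{V}_b)$, one applies \eqref{reg} to $D\mathcal{V}(x+h)$ and $D\mathcal{V}(x)$, exploits the vanishing of $\sum_{ab}c_{ab}\int\det\bigl(D(\eta\,(w^h_{ab}-q_{ab}))\bigr)\dx$ together with the expansion \eqref{sumdet}, and uses $\|Dw^h_{ab}\|\le C\|Du^h\|$ to obtain $\int\eta^2\|Du^h\|^2\dx\le C\sum_{ab}\int\|w^h_{ab}-q_{ab}\|^2\|D\eta\|^2\dx$ uniformly in $h$. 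With $q_{ab}=0$ and the Lipschitz bound this already gives $u\in W^{2,2}_{\loc}$ by the difference-quotient characterization of Sobolev spaces; then, on squares $Q\subset 2Q$ and with $q_{ab}$ chosen as suitable mean values, the two-dimensional $L^2$--$L^1$ Sobolev--Poincar\'e inequality yields $\int_Q\|Du^h\|^2\dx\le Cs^{-2}\bigl(\int_{2Q}\|Du^h\|\dx\bigr)^2$, and only after letting $h\to 0$ does one obtain a reverse H\"older inequality for $\|D^2u\|$ itself, to which Gehring's lemma can legitimately be applied. If you wish to keep your two-point formulation, you would have to rescale the differences by $|x-y|$ and let the points collapse, which amounts to the same difference-quotient argument.
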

\begin{proof}
From now on, we fix open sets $\Omega'\subset\Omega''\subset\Omega$, each with compact closure in the other. For any couple $a,b$ with $1\le a\le b\le m + r$, denote $w_{ab} \doteq\left(\begin{array}{c} \mathcal{V}_a\\ \mathcal{V}_b\end{array}\right)$. Take any nonnegative $\eta\in C^\infty_c(\Omega'')$, $q_{ab} \in \R^2$ constant vectors, and $h \in \R^2$ with $\|h\| \le \frac{\dist(\partial\Omega'',\partial\Omega)}{2}$, and moreover denote, for any function $g:\Omega'\to\R^m$,
\[
g^h(x) = \frac{g(x + h) -g(x)}{\|h\|}.
\]
Since the determinant is a null Lagrangian and $\eta$ has compact support
\begin{align*}
\sum_{ab}c_{ab}\int_{\Omega}\det(D(\eta(x)(w_{ab}^h(x) - q_{ab}))\dx = 0.
\end{align*}
Equation \eqref{sumdet} yields
\begin{align*}
0&=\sum_{ab}c_{ab}\int_{\Omega}\det(D(\eta(x)w_{ab}^h(x) - q_{ab}))\dx = \\
&= \sum_{a,b}c_{ab}\int_{\Omega}\eta^2(x)\det(D w_{ab}^h(x))\dx + \sum_{a,b}c_{ab}\int_{\Omega}\eta(x)\langle\cof^T((w_{ab}^h(x) - q_{ab})\otimes D\eta(x)),D w_{ab}^h(x)\rangle.
\end{align*}
Hence, by \eqref{reg} and our previous computation, we can write
\begin{align*}
\int_\Omega \eta^2(x)\|D u^h(x)\|^2\dx &= \frac{1}{\|h\|^2}\int_\Omega \eta^2(x)\|D (u(x + h) -u(x))\|^2\dx \\
&\le \frac{1}{\|h\|^2}\sum_{ab}c_{ab}\int_{\Omega}\eta^2(x)\det(D (w_{ab}(x + h) - w_{ab}(x)))\dx \\
&= - \sum_{a,b}c_{ab}\int_{\Omega}\eta(x)\langle\cof^T((w_{ab}^h(x)-q_{ab})\otimes D\eta(x)),D w_{ab}^h(x)\rangle\dx\\
&\le  \sum_{a,b}|c_{ab}|\int_{\Omega}\eta(x)\|w_{ab}^h(x) - q_{ab}\|\|D\eta(x)\|\|D w_{ab}^h(x)\|\dx.
\end{align*}
Since $F$ is $C^1$, it is locally Lipschitz. In particular, if $\|u\|_{W^{1,\infty}}\le R$, this implies that, for some constant $c\ge 0$ depending on $R$,
\[
\|D w_{a b}^h(x)\| \le c\|D u^h(x)\|, \text{ a.e. }.
\]
From now on, we will not keep track of the constants, and we will simply denote them by $C$. Continuing our computation, we readily obtain through H\"older's inequality that
\begin{equation}\label{mainineq}
\int_\Omega \eta^2(x)\|D u^h(x)\|^2\dx\le C\sum_{a,b}\int_{\Omega}\|w_{ab}^h(x) - q_{ab}\|^2\|D\eta(x)\|^2\dx.
\end{equation}
Choose $q_{ab} = 0$ for every $a,b$ and $\eta\equiv 1$ on $\Omega'$. Using the fact that $\mathcal{V}$ is Lipschitz, we get
\[
\int_{\Omega'} \|D u^h(x)\|^2\dx\le C(R,\Omega'), \text{ for every sufficiently small }h.
\]
By standard results about Sobolev spaces (see \cite[Proposition 9.3]{BRE}), this implies that $u\in W_{\loc}^{2,2}(\Omega)$. To conclude the proof, we show higher integrability of the Hessian of $u$, namely $D^2u \in L^{2+\rho}$, for some $\rho >0$. To do so, consider again \eqref{mainineq}. This time, consider any square $Q\subset \Omega'$ such that $2Q \subset \Omega'$, where $2Q$ is the square of side $s$ centered at the center of $Q$ but with twice the side. We take $\eta \in C^\infty_c(\sqrt{2}Q)$ with $\eta\equiv 1$ on $Q$, and
\[
\eta\equiv 1 \text{ on $Q$ and } \|D\eta\|(x) \le \frac{C}{s} \text{ on $\sqrt{2}Q$},
\]
for some $C>0$ independent on $x$ and $s$. Then, \eqref{mainineq} becomes
\begin{equation}\label{mainineq2}
\int_Q \|D u^h(x)\|^2\dx\le C\sum_{a,b}\int_{\sqrt{2} Q}\|w_{ab}^h(x) - q_{ab}\|^2\|D\eta(x)\|^2\dx \le \frac{C}{s^2}\sum_{a,b}\int_{\sqrt{2} Q}\|w_{ab}^h(x) - q_{ab}\|^2\dx.
\end{equation}
Now, using \cite[Theorem 3.6]{NPDE}, we can estimate the last term with a Sobolev-type inequality, using $p =2$ and $p^* = 1$, once we have chosen suitably $q_{ab}$:
\begin{align*}
\sum_{a,b}\int_{\sqrt{2} Q}\|w_{ab}^h(x) - q_{ab}\|^2\dx \le C\sum_{a,b}\left(\int_{2Q}\|D w^h_{ab}\|\dx\right)^2.
\end{align*}
Once again, $\|D w^h_{ab}\| \le C\|D u^h\|$ pointwise a.e., where $C$ depends only on the Lipschitz constant of $F$ (that in turn depends only on the Lipschitz constant of $\alpha$). In this way, \eqref{mainineq2} can be rewritten as
\[
\int_Q \|D u^h\|^2\dx \le \frac{C}{s^2}\left(\int_{2Q}\|D u^h\|\dx\right)^2.
\]
Passing to the limit as $h \to 0$, we finally get
\[
\left(\fint_Q\|D^2u\|^2\dx\right)^{\frac{1}{2}} \le C\fint_{2Q}\|D^2u\|\dx.
\]
We can apply Gehring's Lemma as stated, for instance, in \cite[Theorem 1.5]{KIN}, to deduce the higher integrability of the Hessian of our function.
\end{proof}

\subsection{Higher regularity}\label{mor}

By Theorem \ref{pert}, we know that for every $R$, there exists $\varepsilon(R) > 0$ such that
\[
D \U \in C_f \Rightarrow D \U \in W_{\loc}^{2,2 + \rho}(\Omega)
\]
provided that $\|f - \A\|_{C^2(B_{2R})} \le \varepsilon$. In this subsection, we show that, possibly taking a smaller $\varepsilon$, if $f \in C^k$, for $k \ge 2$, then $\U \in C^{k - 1}$. The procedure here is quite stardard (see, for instance, \cite[Corollary]{SAP}) and we describe it for the reader's convenience. To show the improvement of regularity, we exploit the results of \cite{mora,morb}. Suppose that
\[
f \in C^k(\R^{n\times 2},\R),\quad k \ge 2
\]
satisfies the following {Legendre-Hadamard} condition (briefly, LH), i.e. there exists a constant $\mu > 0$ such that
\begin{equation}\label{LH}
D^2f(X)[Y,Y] \ge \mu\|Y\|^2,\; \forall X,Y \in \R^{n\times 2}, \rank(Y) = 1,
\end{equation}
where
\[
D^2f(X)[Y,Y] \doteq \frac{d^2}{dt^2}|_{t = 0} f(X + tY).
\]
Then, applying \cite[Theorem 6.2.5]{morb}, we infer that the $W^{2,2 + \rho}$ solutions of
\[
\dv(Df(D u)) = 0
\]
belong to $C_{\loc}^{k-1,\alpha}$, for some $\alpha$ depending on $\rho$. In order to apply \cite[Theorem 6.2.5]{morb}, we need to prove that functionals close to the area satisfies the LH condition. In Lemma \ref{areaLH} we prove that the area satisfy a local LH condition, and in Lemma \ref{genLH} we extend this to functions close to the area. To apply Morrey's \cite[Theorem 6.2.5]{morb}, we need to prove a global LH condition for these functionals. Nevertheless, since we are just interested in Lipschitz solution of constant $R > 0$, it will be sufficient to prove that there exists an extension of the function $f$ under consideration to the whole $\R^{n\times 2}$ that satisfies the LH condition. This extension is the content of Lemma \ref{EXT}.

\begin{lemma}\label{areaLH}
For every $R >0$, there exists a constant $\tau(R)>0$ such that
\[
D^2\A(X)[Y,Y] \ge \tau\|Y\|^2,\; \forall X,Y \in \R^{n\times 2},X \in B_{\frac{3R}{2}}(0), \rank(Y) = 1.
\]
\end{lemma}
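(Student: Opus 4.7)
My plan is to reduce the computation of $D^2\A(X)[Y,Y]$ to a one-variable problem along the line $t \mapsto X + tY$. Recall that
\begin{equation*}
\A(X)^2 = 1 + \|X\|^2 + \sum_{1 \le a < b \le n} \det(X^{ab})^2,
\end{equation*}
so the only nontrivial dependence on $Y$ enters through the $2\times 2$ minors $\det((X+tY)^{ab})$. The key observation is that, when $\rank(Y) = 1$, every $2\times 2$ submatrix $Y^{ab}$ inherits rank at most one and therefore $\det(Y^{ab}) = 0$. Using the identity $\det(A + B) = \det(A) + \det(B) + \langle A, \cof(B)^T\rangle$ for $2\times 2$ matrices, one concludes that
\begin{equation*}
\det\bigl((X + tY)^{ab}\bigr) = c_{ab} + t\, d_{ab}, \qquad c_{ab} := \det(X^{ab}),\quad d_{ab} := \langle Y^{ab}, \cof(X^{ab})^{T}\rangle,
\end{equation*}
is an affine function of $t$, and hence $t \mapsto \A(X + tY)^2$ is a polynomial of degree at most two whose coefficients can be written down explicitly.

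Using the chain-rule identity $(\A^2)''(t) = 2\A(t)\A''(t) + 2(\A'(t))^2$ at $t = 0$ and solving for $\A''(0) = D^2\A(X)[Y,Y]$, I will obtain the closed form
\begin{equation*}
D^2\A(X)[Y,Y] = \frac{\A(X)^2\bigl(\|Y\|^2 + \sum_{a<b} d_{ab}^2\bigr) - \bigl(\langle X, Y\rangle + \sum_{a<b} c_{ab} d_{ab}\bigr)^2}{\A(X)^3}.
\end{equation*}
At this point the required lower bound is purely algebraic. I apply Cauchy--Schwarz to the enlarged vectors $(X, (c_{ab}))$ and $(Y, (d_{ab}))$ in an auxiliary Euclidean space, which gives
\begin{equation*}
\bigl(\langle X, Y\rangle + \sum_{a<b} c_{ab} d_{ab}\bigr)^2 \le \bigl(\|X\|^2 + \sum_{a<b} c_{ab}^2\bigr)\bigl(\|Y\|^2 + \sum_{a<b} d_{ab}^2\bigr) = \bigl(\A(X)^2 - 1\bigr)\bigl(\|Y\|^2 + \sum_{a<b} d_{ab}^2\bigr).
\end{equation*}
Plugging this back in collapses the numerator to $\|Y\|^2 + \sum_{a<b} d_{ab}^2 \ge \|Y\|^2$, so that
\begin{equation*}
D^2\A(X)[Y,Y] \ge \frac{\|Y\|^2}{\A(X)^3}.
\end{equation*}

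To conclude, continuity of $\A$ on the compact ball $\overline{B_{3R/2}(0)}$ yields $\A(X)^3 \le M(R)$ for some finite $M(R)$, and the statement holds with $\tau(R) := 1/M(R) > 0$. I do not foresee a serious analytic obstacle: the argument rests on two leverage points. First, the rank-one assumption on $Y$ is used exactly once, to kill the $t^2\det(Y^{ab})$ term in the minor expansion; without it $D^2\A[Y,Y]$ would pick up an extra contribution of the shape $\frac{2}{\A(X)}\sum_{a<b} c_{ab}\det(Y^{ab})$ of indefinite sign, which is precisely the reason why only the Legendre--Hadamard condition, and not full convexity, is available here. Second, the Cauchy--Schwarz step is sharp precisely because the extra ``$+1$'' in $\A(X)^2 = 1 + \|X\|^2 + \sum_{a<b}c_{ab}^2$ supplies the slack that survives the cancellation and leaves a strictly positive remainder.
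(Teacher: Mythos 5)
Your proposal is correct and follows essentially the same route as the paper: reduce to the one-variable function $t\mapsto\A(X+tY)$, use $\rank(Y)=1$ to make each minor $\det((X+tY)^{ab})$ affine in $t$, compute the second derivative at $t=0$ explicitly, and bound the numerator below by $\|Y\|^2$ before dividing by $\A(X)^3$, which is bounded on the ball. The only (cosmetic) difference is that you obtain the lower bound on the numerator with a single Cauchy--Schwarz inequality on the augmented vectors $(X,(c_{ab}))$ and $(Y,(d_{ab}))$, exploiting Cauchy--Binet and the extra ``$+1$'' in $\A^2$, whereas the paper splits the same quantity into three separately nonnegative terms.
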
 

\begin{proof}
Fix $X \in \R^{n\times 2}$, $\|X\| \le R$, and $Y \in \R^{n\times 2}$ with $\|Y\| = 1$ and $\rank(Y) = 1$. Define the function $$g(t) \doteq \A(X + tY).$$
The thesis is equivalent to
\[
g''(0) \ge \tau(R).
\]
Since $\rank(Y) = 1$
\[
g(t) = \sqrt{1 + \|X + tY\|^2 + \sum_{a,b}(\det(X^{ab}) + t\langle X^{ab},\cof^T(Y^{ab})\rangle)^2 }.
\]
Therefore,
\[
g'(t) = \frac{s(t)}{g(t)},
\]
where
\[
s(t) = \langle X + tY,Y\rangle + \sum_{a,b}(\det(X^{ab}) + t\langle X^{ab},\cof^T(Y^{ab})\rangle)\langle X^{ab},\cof^T(Y^{ab})\rangle.
\]
This implies
\[
g''(t) = \frac{s'(t)}{g(t)} - \frac{s(t)g'(t)}{g^2(t)} =  \frac{s'(t)}{g(t)} - \frac{s^2(t)}{g^3(t)} = \frac{s'(t)g^2(t) - s^2(t)}{g^3(t)}.
\]
Finally:
\[
g''(0) =  \frac{s'(0)g^2(0) - s^2(0)}{g^3(0)}.
\]
We will now show that $s'(0)g^2(0) - s^2(0) \ge 1$, and this concludes the proof. To simplify the notation, define 
\begin{align*}
&A \doteq \sum_{a,b} \langle X^{ab},\cof^T(Y^{ab})\rangle^2,\\
&B \doteq \sum_{a,b}\det(X^{ab})\langle X^{ab},\cof^T(Y^{ab})\rangle.
\end{align*}
Recall that we are assuming $\|Y\| = 1$, and that $ \sum_{a,b}(\det(X^{ab}))^2 = \det(X^TX)$. Therefore:
\begin{equation}\label{end}
s'(0)g^2(0) - s^2(0) = 1 + \|X\|^2 + \det(X^TX) + A + A\|X\|^2 + A\det(X^TX) -  (\langle X,Y\rangle + B)^2,
\end{equation}
and
\begin{align*}
&\|X\|^2 + \det(X^TX) + A\|X\|^2 + A\det(X^TX) -  (\langle X,Y\rangle + B)^2\\
&= (\|X\|^2 - \langle X,Y\rangle^2)  + (A\det(X^TX) - B^2) + (\det(X^TX) + A\|X\|^2 -  2\langle X,Y\rangle B).
\end{align*}
We claim that the terms in brackets of the previous expression are all nonnegative. This would conclude the proof, since then, considering \eqref{end}
\[
s'(0)g^2(0) - s^2(0) \ge 1 + A
\]
and $A \ge 0$. Let us prove the claim. First, we need to show that $$\|X\|^2 - \langle X,Y\rangle^2 \ge 0.$$ Cauchy-Schwartz inequality and the fact that $\|Y\| = 1$ imply
\[
\|X\|^2 - \langle X,Y\rangle^2 \ge \|X\|^2 - \|X\|^2\|Y\|^2 =  \|X\|^2 - \|X\|^2 = 0.
\]
The second inequality we need is
\[
B^2 \le A\det(X^TX).
\]
By the definition of $B$ and applying again Cauchy-Schwartz inequality:
\[
B^2 = \left(\sum_{a,b}\det(X^{ab})\langle X^{ab},\cof^T(Y^{ab})\rangle\right)^2 \le \sum_{a,b}\det(X^{ab})^2\sum_{a,b}\langle X^{ab},\cof^T(Y^{ab})\rangle^2 = A\det(X^TX).
\]
Finally, we prove that
\[
2\langle X,Y\rangle B \le A\|X\|^2 + \det(X^TX).
\]
By Cauchy-Schwartz and Young inequality:
\begin{align*}
2\langle X,Y\rangle B &= 2\langle X,Y\rangle\sum_{a,b}(\det(X^{ab})\langle X^{ab},\cof^T(Y^{ab})\rangle) \\
&\le 2|\langle X,Y\rangle|\sqrt{\sum_{a,b}\det(X^{ab})^2}\sqrt{\sum_{a,b}\langle X^{ab},\cof^T(Y^{ab})\rangle^2}\\
& = 2|\langle X,Y\rangle|\det(X^TX)^{\frac{1}{2}}A^{\frac{1}{2}}\\
& \le A|\langle X,Y\rangle|^2 + \det(X^TX) \le A\|X\|^2 + \det(X^TX).
\end{align*}
\end{proof}

\begin{lemma}\label{genLH}
For every $R> 0$, there exists $\varepsilon'(R) >0$ such that, if $f \in C^2(\R^{n\times 2})$ and
\[
\|f - \A\|_{C^2(B_{2R})} \le \varepsilon'(R),
\]
then there exists a constant $\tau' = \tau'(R)$ such that
\[
D^2f(X)[Y,Y] \ge \tau'\|Y\|^2, \forall X, Y \in \R^{n\times 2}, \|X\| \le \frac{3R}{2}, \rank(Y) = 1.
\]
\end{lemma}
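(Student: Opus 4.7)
The strategy is a direct perturbation argument based on Lemma \ref{areaLH}: the area functional already satisfies the desired Legendre--Hadamard condition on $B_{3R/2}(0)$ with some constant $\tau(R) > 0$, and for rank-one $Y$ the bilinear form $D^2(f-\A)(X)$ contributes an error that can be made arbitrarily small uniformly in $X$ and in rank-one $Y$ by shrinking the $C^2$-distance between $f$ and $\A$.

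Concretely, the plan is as follows. First I would recall from Lemma \ref{areaLH} that for every $X \in B_{3R/2}(0) \subset B_{2R}(0)$ and every $Y\in \R^{n\times 2}$ with $\rank(Y) = 1$,
\[
D^2\A(X)[Y,Y] \ge \tau(R)\|Y\|^2.
\]
Then I would write, for any $f \in C^2(\R^{n\times 2})$,
\[
D^2 f(X)[Y,Y] = D^2 \A(X)[Y,Y] + D^2(f-\A)(X)[Y,Y],
\]
and estimate the remainder by
\[
|D^2(f-\A)(X)[Y,Y]| \le \|D^2(f-\A)(X)\|_{\mathrm{op}} \, \|Y\|^2 \le C_0\, \|f - \A\|_{C^2(B_{2R})}\, \|Y\|^2,
\]
where $C_0$ is a purely dimensional constant coming from the equivalence between the operator norm of a bilinear form on $\R^{n\times 2}$ and the entrywise maximum that is controlled by the $C^2$ norm (and where we crucially use $\|X\| \le 3R/2 < 2R$).

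Combining these two bounds gives
\[
D^2 f(X)[Y,Y] \ge \bigl(\tau(R) - C_0 \, \|f-\A\|_{C^2(B_{2R})}\bigr)\|Y\|^2.
\]
Thus it suffices to choose
\[
\varepsilon'(R) \doteq \frac{\tau(R)}{2C_0},\qquad \tau'(R) \doteq \frac{\tau(R)}{2},
\]
so that whenever $\|f-\A\|_{C^2(B_{2R})} \le \varepsilon'(R)$ we obtain $D^2 f(X)[Y,Y] \ge \tau'(R)\|Y\|^2$ for all $X \in B_{3R/2}(0)$ and every rank-one $Y$, as required.

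There is no real obstacle here: once Lemma \ref{areaLH} is in hand, the only point to verify carefully is that the $C^2$-norm on $B_{2R}$ actually controls the operator norm of $D^2(f-\A)(X)$ uniformly for $X \in B_{3R/2}(0)$, which reduces to the elementary observation that $B_{3R/2}(0)\subset B_{2R}(0)$ and that all norms on the finite-dimensional space of symmetric bilinear forms on $\R^{n\times 2}$ are equivalent.
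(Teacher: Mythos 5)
Your proof is correct, but it takes a genuinely different route from the paper. The paper argues by contradiction and compactness: it assumes the statement fails, extracts sequences $f_n$, $c_n\to 0$, $X_n\to X$, $Y_n\to Y$ with $\|Y_n\|=1$, $\rank(Y_n)=1$ and $D^2f_n(X_n)[Y_n,Y_n]\le c_n$, and passes to the limit (using $\|f_n-\A\|_{C^2(B_{2R})}\le \tfrac1n$ and continuity of $D^2\A$) to contradict Lemma \ref{areaLH}; this yields the existence of $\varepsilon'(R)$ and $\tau'(R)$ without producing them explicitly, and matches in style the contradiction argument used for Lemma \ref{pertlem}. You instead give a direct perturbation estimate: split $D^2f = D^2\A + D^2(f-\A)$, use Lemma \ref{areaLH} on $B_{3R/2}(0)$, and absorb the error term via $|D^2(f-\A)(X)[Y,Y]|\le C_0\|f-\A\|_{C^2(B_{2R})}\|Y\|^2$, which is legitimate since $B_{3R/2}(0)\subset B_{2R}(0)$ and all norms on the finite-dimensional space of symmetric bilinear forms on $\R^{n\times 2}$ are equivalent (so $C_0$ depends only on $n$). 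Your choice $\varepsilon'(R)=\tau(R)/(2C_0)$, $\tau'(R)=\tau(R)/2$ is valid, and as a bonus it is quantitative: it exhibits explicit constants where the paper's compactness argument only asserts their existence. The only point worth stating carefully, which you do, is exactly the uniform control of the operator norm of $D^2(f-\A)(X)$ by the $C^2$ norm on the larger ball; beyond that there is no gap.
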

\begin{proof}
Suppose by contradiction that the thesis is false. Then, we can find a sequence of functions $f_n$, a sequence of positive numbers $c_n$ and sequences of matrices $X_n$, $Y_n$ such that:
\begin{enumerate}[(i)]
\item $\|f_n - \A\|_{C^2(B_{2R})} \le \frac{1}{n}$;
\item $c_n \to 0$;
\item $X_n \to X$;
\item $\|Y_n\|= 1$, $\rank(Y_n) = 1$, and $Y_n \to Y \in \R^{n\times 2}, \|Y\| = 1, \rank(Y) = 1$
\item $D^2f_n(X_n)[Y_n,Y_n] \le c_n$.\label{5}
\end{enumerate}
Passing to the limit in $\eqref{5}$, we immediately get a contradiction with Lemma \ref{areaLH}.
\end{proof}

In order to prove the next lemma we need to introduce a new:

\begin{Def}
Let $\mu \ge 0$. The function $h: \R^{n\times 2} \to \R$ is $\mu$-rank-one convex if and only if for every $X,Y \in \R^{n\times 2}$, $\rank(Y) = 1$,
\[
\phi(t) \doteq h(X + tY)
\]
is a uniformly convex function with constant $\mu$, i.e.
\[
\phi(at_1 + bt_2) \le t_1\phi(a) + t_2\phi(b) - t_1t_2\mu|a - b|^2, \; \forall a,b,t_1,t_2\in \R, t_1 + t_2 = 1, t_1,t_2\ge 0.
\]
If $\mu = 0$, the function $h$ is simply called {rank-one convex}.
\end{Def}

It is not difficult to see that if $h \in C^2(\R^{n\times 2})$, then $h$ is $\mu$-rank-one convex if and only if it satisfies the LH condition with constant $\mu$ (i.e., \eqref{LH} holds). Therefore we will say that a $C^2$ function $h$ is $\mu$- rank-one convex in $B_{r}(0)$ for some $r > 0$ if and only if \eqref{LH} holds for every $X \in B_r(0) \subset \R^{n\times 2}$ and for every $Y \in \R^{n\times 2}$ with $\rank(Y) = 1$.

\begin{lemma}\label{EXT}
Let $f\in C^k(B_{2R})$, $k \ge 2$, be a $\mu$-rank-one convex function on $B_{2R}$. Then, there exists a function $F$ such that
\begin{itemize}
\item $F = f$ on $B_{\frac{3R}{2}}$;
\item $F \in C^k(\R^{n\times 2})$;
\item $F$ is $\frac{\mu}{2}$ rank-one convex. 
\end{itemize}
\end{lemma}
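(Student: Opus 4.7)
The plan is to add a large globally convex bump that vanishes on $B_{3R/2}$ to a bounded-Hessian extension of $f$, so as to restore rank-one convexity outside $B_{3R/2}$ without touching $f$ on $B_{3R/2}$.

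First I would choose $\chi\in C^\infty_c(\R^{n\times 2})$ with $\chi\equiv 1$ on $B_{7R/4}$ and $\spt(\chi)\subset B_{2R}$, and set $\tilde f\doteq\chi f$ (extended by zero outside $\spt(\chi)$). Then $\tilde f\in C^k(\R^{n\times 2})$, $\tilde f\equiv f$ on $B_{7R/4}\supset B_{3R/2}$, and there is a constant $K=K(f,R)$ with
\[
|D^2\tilde f(X)[Y,Y]|\le K\|Y\|^2\qquad\text{for every }X,Y\in\R^{n\times 2}.
\]

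Second, I would construct the convex bump. Fix $\phi\in C^\infty(\R)$ such that $\phi(t)=0$ for $t\le 0$ (with all derivatives flat at $0$), $\phi',\phi''\ge 0$ everywhere, and $\phi'(t)\ge 1$ for $t\ge t_0$, where $t_0<(7R/4)^2-(3R/2)^2=13R^2/16$; the classical flat-at-zero cutoff gives such a $\phi$ after an affine rescaling. Define $g(X)\doteq\phi(\|X\|^2-(3R/2)^2)$. Then $g\in C^\infty(\R^{n\times 2})$, $g\equiv 0$ on $B_{3R/2}$, and a direct calculation gives
\[
D^2g(X)[Y,Y]=2\phi'(\|X\|^2-(3R/2)^2)\|Y\|^2+4\phi''(\|X\|^2-(3R/2)^2)\langle X,Y\rangle^2\ge 0,
\]
which moreover is $\ge 2\|Y\|^2$ whenever $\|X\|\ge 7R/4$.

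Third, I would set $F\doteq \tilde f+Cg$ for $C\ge (K+\mu/2)/2$ and verify the three requirements region by region. On $B_{3R/2}$ we have $g\equiv 0$ and $\tilde f=f$, so $F=f$ and inherits $\mu$-rank-one convexity. On the annulus $B_{7R/4}\setminus B_{3R/2}$ we still have $\tilde f=f$, hence $D^2\tilde f(X)[Y,Y]\ge \mu\|Y\|^2$ for rank-one $Y$, and $D^2g\ge 0$ preserves this. Outside $B_{7R/4}$ we estimate
\[
D^2F(X)[Y,Y]\ge -K\|Y\|^2+2C\|Y\|^2\ge \tfrac{\mu}{2}\|Y\|^2
\]
for \emph{every} $Y\in\R^{n\times 2}$. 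Thus $F\in C^k(\R^{n\times 2})$, $F\equiv f$ on $B_{3R/2}$, and $F$ is $\tfrac{\mu}{2}$-rank-one convex everywhere.

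The only delicate point I foresee is arranging $\phi$ to be simultaneously flat at $0$ (so that $g$ glues as a $C^\infty$ function across $\partial B_{3R/2}$) and strictly increasing past the threshold $t_0$ (so that $g$ gives ellipticity where $\tilde f$ may have lost rank-one convexity). Both conditions are compatible, and the constants $K$, $t_0$, $C$ are chosen in the order above without any circular dependence.
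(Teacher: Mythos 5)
Your argument is correct, and it reaches the conclusion by a genuinely different and more elementary route than the paper. The paper does not work with a cutoff-plus-penalty construction: it subtracts a multiple of $\|X\|^2$ from $f$, invokes the M\"uller--\v Sver\'ak extension lemma \cite[Lemma 2.3]{SMVS} to extend the resulting rank-one convex function to all of $\R^{n\times 2}$, adds the quadratic back to get a globally $\frac{3\mu}{4}$-rank-one convex $F''$, and then --- because that abstract extension is only locally Lipschitz outside the ball where it equals $f$ --- mollifies and glues $F''$ with its mollification via a cutoff supported where $F''=f$ is $C^k$, checking that the commutator terms from the cutoff are $O(\varepsilon)$ in the sense of quadratic forms. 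Your construction avoids both the external extension lemma and the mollification/gluing step: you truncate $f$ by a smooth cutoff to get a global $C^k$ function $\tilde f$ with a uniform Hessian bound $K$, and then add $Cg$ where $g=\phi(\|X\|^2-(3R/2)^2)$ is smooth, convex, vanishes identically on $B_{3R/2}$, and is uniformly convex (in all directions, not just rank-one) outside $B_{7R/4}$; the region-by-region check ($F=f$ with constant $\mu$ on $B_{3R/2}$, $f$ plus a convex term on the annulus, $2C-K\ge\mu/2$ outside) is complete, and such a $\phi$ indeed exists (take $\phi'$ a smooth nondecreasing step from $0$ to $1$ on $[0,t_0]$). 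What your approach buys is self-containedness and explicit constants; what it gives up is immaterial here, since the lemma only requires $F=f$ on $B_{3R/2}$ and a global lower Hessian bound on rank-one directions, both of which you verify (the only bookkeeping caveat is the factor of two between the paper's definition of $\mu$-rank-one convexity via uniform convexity along rank-one lines and the Legendre--Hadamard bound $D^2F[Y,Y]\ge\mu\|Y\|^2$, which the paper itself identifies and which only shifts constants).
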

\begin{proof}
Choose any $R_1 \in \left(\frac{3R}{2},2R\right)$. Moreover, define $f'(X) \doteq f(X) - \frac{3\mu\|X\|^2}{4}$. Notice that, by our hypothesis, $f'$ is still rank-one convex on $B_{2R}(0)$. Apply \cite[Lemma 2.3]{SMVS} to find a rank one convex function $F': \R^{n\times 2} \to \R$ such that $F'$ coincides with $f'$ on $B_{R_1}$. The function $$F''(X)\doteq F'(X) + \frac{3\mu\|X\|^2}{4}$$ is $\frac{3\mu}{4}$- rank-one convex on the whole $\R^{n\times 2}$ and on $B_{R_1}$ it coincides with $f(X)$. We take any family of mollifiers $\rho_\varepsilon$ on $\R^{n\times 2}$ with $\spt(\rho_\varepsilon) \subset B_{\varepsilon}(0)$ and $\rho_\varepsilon(X) \ge 0$ for every $X \in \R^{n\times 2}$, and define
\[
F_\varepsilon(X) \doteq (F''\star\rho_\varepsilon)(X), \forall X \in \R^{n\times 2}.
\]
The convolution is well defined since rank-one convexity implies that $F''$ is locally Lipschitz. Through a direct computation, it is easy to see that $F_\varepsilon$ is still $\frac{3\mu}{4}$-rank one convex. Consider any $R_2 \in \left(\frac{3R}{2},R_1\right)$ and take a function $\eta \in C_c^{\infty}(\R^{n\times 2})$ such that $0\le\eta(X)\le 1,\forall X,$ $\eta \equiv 1$ on $B_{R_2 + \delta}$ and $\eta \equiv 0$ on $B^c_{R_1- \delta}$, with $0 <\delta \doteq \frac{R_1 - R_2}{10}$. Next, define
\[
G_\varepsilon(X) \doteq \eta(X) F''(X) + (1-\eta(X))F_{\varepsilon}(X).
\]
We claim that there exists $\varepsilon>0$ such that $G_\varepsilon(X)$ has the desired properties. Indeed, for every $\varepsilon > 0$, $G_\varepsilon$ is a $C^k(\R^{n\times 2})$ function that coincides with $F''$ and therefore $f$ on $B_{\frac{3R}{2}}$. Moreover, by the properties of the support of $\eta$ and the $\frac{3\mu}{4}$-rank one convexity of $F''$ and $F_\varepsilon$, it holds
\[
D^2G_\varepsilon(X)[Y,Y] \ge \frac{3\mu}{4}\|Y\|^2
\]
for every $\varepsilon >0$, $Y \in \R^{n\times 2}$ with $\rank(Y) = 1$ and $X \in \mathcal{B}\doteq \bar{B}_{R_2 + \frac{\delta}{2}}\cup B^c_{R_1 - \frac{\delta}{2}}$. Therefore, to conclude the proof, we need to show that for $\varepsilon> 0$ sufficiently small,
\[
D^2G_\varepsilon(X)[Y,Y] \ge \frac{\mu}{2}\|Y\|^2, \text{ for } X \in \mathcal{B}^c. 
\]
Take $\varepsilon < \frac{R_1 - R_2}{100}$. In this case, we see that for every $X \in \mathcal{B}^c$
\begin{equation}\label{moll}
DF_\varepsilon(X) = (DF''\star\rho_\varepsilon)(X) \text{ and } D^2F_\varepsilon(X) = (D^2F''\star\rho_\varepsilon)(X),
\end{equation}
since $F''$ coincides with the $C^k$ ($k\ge 2$) function $f$ on $B_{R_1}$. We obtain
\begin{align*}
D^2G_\varepsilon &= F''D^2\eta + (D\eta\otimes DF'' + DF''\otimes D\eta) +\eta D^2F'' \\
&- F_\varepsilon D^2\eta - (D\eta\otimes DF_\varepsilon + DF_\varepsilon\otimes D\eta) + (1 - \eta) D^2F_\varepsilon.
\end{align*}
Define
\begin{align*}
V_\varepsilon &\doteq F''D^2\eta + (D\eta\otimes DF'' + DF''\otimes D\eta) \\
&- F_\varepsilon D^2\eta - (D\eta\otimes DF_\varepsilon + DF_\varepsilon\otimes D\eta).
\end{align*}
For every tensor $W = (W_{abcd}), a,c \in \{1,\dots n\}, b,d \in \{1,2\}$, denote with
\[
W[Y,Y] \doteq \sum_{a,b,c,d}W_{abcd}y_{ab}y_{cd},\quad \forall Y = (y_{ij}) \in \R^{n\times 2}.
\]
Exploiting \eqref{moll} and the regularity of $F''$, we see that there exists a constant $C> 0$ independent of $X$ such that
\[
|V_\varepsilon(X)[Y,Y]| \le C\varepsilon\|Y\|^2,
\]
for every $X \in \mathcal{B}^c$ and every $Y \in \R^{n\times 2}$ (non necessarily with $\rank(Y) = 1$). We can choose any number $ 0 < \varepsilon \le \frac{\mu}{4C}$. Let it be $\varepsilon_0$, and call $F(X) \doteq G_{\varepsilon_0}(X)$. $F$ has the three properties listed in the statement of the Lemma.
\end{proof}

We can summarize the result of this section in the following

\begin{Teo}\label{FINALE}
For every $R > 0$, there exists $\alpha = \alpha(R) > 0$ such that, if $f$ is a $C^k(\R^{{2n + 2}\times 2})$ function, $k \ge 2$, with the property that
\begin{equation}\label{near}
\|f - \A\|_{C^2(B_{2R}(0))} \le \alpha,
\end{equation}
and $\U: \Omega \to \R^{2n+ 2}$ is a Lipschitz solution of
\begin{equation}\label{incc}
D \U(x) \in C_f, \text{ for a.e. }x\in \Omega
\end{equation}
with
\[
\|D \U\|_{\infty} \le R,
\]
then
$\U \in C^{k-1,\rho}(\Omega)$, for some positive $\rho > 0$.
\end{Teo}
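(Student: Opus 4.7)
The plan is to combine the $W^{2,2+\rho}$ regularity already produced in Theorem \ref{pert} with elliptic regularity theory applied to the Euler-Lagrange equation satisfied by the first block $u$ of $\U$, and then propagate the gain to $v$ and $w$ through the explicit pointwise identities on the target set $C_f$. I would first fix $\alpha(R)$ smaller than both the $\varepsilon(R)$ of Theorem \ref{pert} and the $\varepsilon'(R)$ of Lemma \ref{genLH}, so that one single smallness assumption on $\|f - \A\|_{C^2(B_{2R})}$ simultaneously yields the generalized-convexity inequality \eqref{genf} and the Legendre-Hadamard condition on $B_{3R/2}$.

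Under \eqref{near} with such an $\alpha$, Theorem \ref{pert} gives $\U \in W^{2,2+\rho}_{\loc}(\Omega)$ for some $\rho > 0$. Writing $\U = (u,v,w)^T$, the identity $Dv = A_f(Du) = Df(Du)J$, combined with the fact that each $\nabla v^\ell$ is a gradient (hence curl-free) and the two-dimensional identity $\curl(FJ) = \dv(F)$, forces $\dv(Df(Du)) = 0$ in the weak sense. Next I would invoke Lemma \ref{EXT} on the function $f$, which by Lemma \ref{genLH} is $\tau'$-rank-one convex on $B_{2R}$, to produce a function $F \in C^k(\R^{n\times 2})$ that coincides with $f$ on $B_{3R/2}$ and is globally $\frac{\tau'}{2}$-rank-one convex. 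Since $\|D\U\|_\infty \leq R < \frac{3R}{2}$, the function $u$ in fact solves $\dv(DF(Du)) = 0$, but with $F$ now globally $C^k$ and satisfying Legendre-Hadamard everywhere.

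Applying the classical regularity theorem \cite[Theorem 6.2.5]{morb} to this quasilinear elliptic system, using $u \in W^{2,2+\rho}_{\loc}$ as the starting regularity, yields $u \in C^{k-1,\rho'}_{\loc}(\Omega)$ for some $\rho' \in (0,1)$. The explicit pointwise expressions $Dv = Df(Du)J$ and $Dw = (Du)^T Df(Du)J - f(Du)J$ then realize $Dv$ and $Dw$ as compositions of $C^{k-1}$ maps with $Du \in C^{k-2,\rho'}_{\loc}$, so $v, w \in C^{k-1,\rho'}_{\loc}(\Omega)$ as well, and the theorem follows. The main technical obstacle is the clean application of Morrey's theorem, which requires a globally defined $C^k$ integrand satisfying Legendre-Hadamard on all of $\R^{n\times 2}$; it is precisely this requirement that forces the detour through Lemmas \ref{areaLH}, \ref{genLH} and the delicate extension in Lemma \ref{EXT}. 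Once this global ellipticity is in place, the standard difference-quotient and Campanato-type bootstrap of \cite{mora,morb} converts the $W^{2,2+\rho}$ estimate into exactly $k-1$ Hölder derivatives.
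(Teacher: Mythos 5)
Your proposal is correct and follows essentially the same route as the paper: choose $\alpha=\min\{\varepsilon(R),\varepsilon'(R)\}$, get $\U\in W^{2,2+\rho}_{\loc}$ from Theorem \ref{pert}, extend $f$ via Lemma \ref{genLH} and Lemma \ref{EXT} to a globally Legendre--Hadamard integrand $F$ with $F=f$ on $B_{3R/2}$, and conclude with Morrey's \cite[Theorem 6.2.5]{morb} applied to $\dv(DF(Du))=0$. Your explicit derivation of the Euler--Lagrange equation from $Dv=Df(Du)J$ and the propagation of regularity to $v$ and $w$ through the pointwise identities are steps the paper leaves implicit, but they are consistent with its argument.
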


\begin{proof}
Fix $R > 0$. Choose $\alpha(R) \doteq \min\{\varepsilon(R),\varepsilon'(R)\}$, where $\varepsilon$ and $\varepsilon'$ are defined in Lemma \ref{pertlem} and Lemma \ref{genLH} respectively. Take any $f$ satisfying $\eqref{near}$ and a $R$-Lipschitz $\U$ satisfying \eqref{incc}. By our choice of $\alpha$, $\U$ belongs to $W_{\loc}^{2,2 + \rho}(\Omega)$ by Theorem \ref{pert}. Again, by the choice of $\alpha$, by Lemma \ref{genLH} we have that $f$ satisfies the LH condition in $B_{2R}$. Using Lemma \ref{EXT}, we can consider $F \in C^k(\R^{n\times 2})$ that extends $f$ outside $B_{\frac{3R}{2}}$ and that satisfies the LH condition on the whole $\R^{n\times 2}$. Since $\|D\U\|_{\infty} \le R$,
\[
\dv(DF(D u)) = \dv(Df(D u)) = 0, \text{a.e. in } \Omega.
\]
$\U$ has the desired regularity by \cite[Theorem 6.2.5]{morb}, as described at the beginning of this subsection.
\end{proof}

\section{Irregular critical points for inner variations}\label{IRREG}

The purpose of this section is to show the following:
\begin{Teo}\label{IRR}
Let $\Omega$ be an open and bounded subset of $\R^2$. There exists a map $\psi \in W^{1,p}(\Omega,\R^2)$ for some $p > 2$ that solves
\[
\curl(B(D\psi)) = 0,
\]
and such that for every open $\mathcal{V} \subset \Omega$, $\psi$ is not $C^1(\mathcal{V})$.
\end{Teo}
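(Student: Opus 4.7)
The strategy I would pursue combines two ingredients: a linear-algebra statement (Lemma \ref{algebra}) identifying a 5-tuple of matrices in $\R^{2\times 2}$ on which $B$ is constant and which admits the rank-one compatibility needed for Kirchheim's construction, together with Kirchheim's Example 4.41 that produces an everywhere-irregular Lipschitz map whose gradient attains exactly five prescribed values.

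The first and main step is to produce matrices $X_1,\dots,X_5 \in \R^{2\times 2}$ such that (i) $\{X_1,\dots,X_5\}$ forms a $T_5$-type configuration, i.e., has the rank-one connection pattern along which Kirchheim's staircase laminate converges, and (ii) there is a single matrix $M \in \R^{2\times 2}$ with $B(X_1)=\dots=B(X_5)=M$. The feasibility comes from the observation that, by \eqref{Bform}, the value $B(X)$ depends on $X$ only through $X^TX$, so each level set of $B$ is an orbit of the left $O(2)$-action $X\mapsto QX$; together with the rescaling $X\mapsto \lambda X$ (which changes $B$ non-trivially through $\A(X)$) this provides enough gauge freedom to satisfy the five $B$-equalities while preserving the $T_5$ rank-one incidences. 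Concretely, I would start from an explicit $T_5$ configuration (many are available in the convex-integration literature), parametrize $X_i = \lambda_i Q_i \widetilde X_i$ with $Q_i \in O(2)$ and $\lambda_i>0$, and solve the conditions $B(X_i)=B(X_1)$ for the free parameters; a dimension count (twenty entries minus roughly five rank-one constraints leaves fifteen parameters against eight $B$-equality constraints) makes this expected to succeed, and I would present an explicit choice rather than an implicit-function-theorem argument for concreteness. This algebraic step is where I expect essentially all of the real work to lie.

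Granted Lemma \ref{algebra}, the proof concludes easily. Apply Kirchheim's Example 4.41 to the configuration $\{X_1,\dots,X_5\}$ to obtain a Lipschitz map $\psi:\Omega\to\R^2$ with $D\psi(x)\in\{X_1,\dots,X_5\}$ almost everywhere, such that $D\psi$ is discontinuous on every nonempty open $\mathcal V\subset\Omega$; in particular $\psi\notin C^1(\mathcal V)$ for any such $\mathcal V$. Being Lipschitz, $\psi\in W^{1,\infty}(\Omega;\R^2)\subset W^{1,p}(\Omega;\R^2)$ for every $p>2$.

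Finally, since $B(X_i)=M$ for every $i$, we have $B(D\psi(x))=M$ for almost every $x\in\Omega$. A constant matrix field is trivially curl-free in the distributional sense, so
\[
\curl\bigl(B(D\psi)\bigr)=\curl(M)=0,
\]
which is precisely the inner-variation equation of the area functional. The function $\psi$ therefore has all the properties claimed in the statement. The only point requiring genuine effort is the construction in Lemma \ref{algebra}; everything else follows by quoting Kirchheim's result and the observation that constant matrix fields lie in the kernel of $\curl$.
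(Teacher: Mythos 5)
Your overall strategy (an algebraic lemma about level sets of $B$ plus Kirchheim's Example 4.41) is in the right spirit, but as written the proof has a genuine gap, concentrated exactly in the two places you treat as routine. First, the step ``apply Kirchheim's Example 4.41 to the configuration $\{X_1,\dots,X_5\}$'' is not a legitimate citation: Example 4.41 is not a general theorem taking an arbitrary five-matrix set (let alone a $T_5$-type one) as input. It is a specific construction built on the Kirchheim--Preiss five-point set of \emph{symmetric} matrices without rank-one connections (Construction 4.38 in \cite{KIRK}), whose non-rigidity rests on the delicate ``gradients stable only near $K$'' in-approximation structure; this does not transfer to a configuration you cook up by a dimension count, and you never actually produce your matrices $X_1,\dots,X_5$ nor verify that an exact, non-affine solution of the five-point inclusion exists for them. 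Moreover, the map of Example 4.41 is not Lipschitz and its gradient does not take five values: it is $\psi=f\circ F^{-1}$ with $F$ a quasiregular homeomorphism, it lies only in $W^{1,p}$ for some $p>2$, and $D\psi$ ranges over the two planes $H_1\cup H_2$ of conformal/anticonformal matrices. Second, the property that $\psi$ fails to be $C^1$ on \emph{every} open set is not part of Example 4.41 and cannot just be quoted: in the paper it requires an extra Baire-category refinement (Lemma \ref{SUFF}, ensuring all five gradients of the intermediate map appear with positive measure in every ball, transferred through $F$ in Lemma \ref{PSI}) together with the harmonicity/positive-measure-zero-gradient argument ruling out $C^1$ on any open subset. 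Your proposal silently assumes both.

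It is worth noting that your key observation --- $B(X)$ depends on $X$ only through $X^TX$ --- is sound and is essentially what the paper exploits, but in a simpler form: Lemma \ref{algebra} shows $B\equiv -J$ on the whole conformal/anticonformal cone $H_1\cup H_2$ (the radial dependence cancels against $\A(X)$, so in particular your remark that rescaling changes $B$ fails there, harmlessly). Because the output of Example 4.41 already has $D\psi\in H_1\cup H_2$ a.e., no new five-matrix configuration with constant $B$ is needed; the five matrices enter only as the gradients of the auxiliary map $f$ \emph{before} the quasiregular change of variables. If you want to repair your argument along your own lines, you must either (a) construct your $X_i$ explicitly, prove exact non-rigidity for that specific set, and separately prove the everywhere-discontinuity of the gradient, or (b) follow the paper's route, where the only algebra needed is the constancy of $B$ on $H_1\cup H_2$ and the real work is the Baire-category strengthening of Kirchheim's construction.
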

The proof of this result is achieved by combining a simple Linear Algebra lemma, Lemma \ref{algebra}, with the counterexample constructed in \cite[Example 4.41]{KIRK}. First, let us define
\[
H_1 \doteq \left\{X \in \R^{2\times 2}: X =
\left(
\begin{array}{cc}
a&b\\
b&-a
\end{array}
\right)\right\}
\]
and
\[
H_2 \doteq \left\{X \in \R^{2\times 2}: X =
\left(
\begin{array}{cc}
a&-b\\
b&a
\end{array}
\right)\right\}.
\]

\begin{lemma}\label{algebra}
For every $X \in H_1\cup H_2$, we have
\[
A(X) = XJ
\]
and
\[
B(X) = J
\]
\end{lemma}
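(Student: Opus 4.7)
The plan is to verify both identities by a direct algebraic computation, exploiting the rigid structure of $H_1$ (trace-free symmetric matrices) and $H_2$ (the ``complex number'' embedding of $\mathbb{C}$ into $\mathbb{R}^{2\times 2}$). The starting observation is that for every $X \in H_1 \cup H_2$, with parameters $(a,b)$ and $r^2 \doteq a^2 + b^2$, the two columns of $X$ are mutually orthogonal with common squared norm $r^2$; hence $X^T X = r^2 \,\Id_2$, so in particular $\|X\|^2 = 2r^2$. Combined with $\det(X) = -r^2$ on $H_1$ and $\det(X) = +r^2$ on $H_2$, both giving $\det(X)^2 = r^4$, substitution into the definition of $\mathcal{A}$ (the Cauchy--Binet sum collapses to a single minor since $n=2$) yields $\mathcal{A}(X)^2 = 1 + 2r^2 + r^4 = (1+r^2)^2$, so $\mathcal{A}(X) = 1 + r^2$.

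With this in hand, the value of $B(X)$ drops out of identity \eqref{B} from Lemma \ref{Bprop}, which in the present case reads
\[
\mathcal{A}(X)\,B(X) \;=\; X^T X J - (1+\|X\|^2) J \;=\; r^2 J - (1+2r^2) J \;=\; -(1+r^2) J.
\]
Dividing by $\mathcal{A}(X) = 1+r^2$ shows that $B(X)$ is a constant multiple of $J$ on all of $H_1 \cup H_2$, which is exactly the content of the second claim (up to the sign convention fixed in Section \ref{aarea}). For $A(X) = D\mathcal{A}(X) J$, I would use formula \eqref{DD} after a direct inspection of $\cof(X)^T$: one checks that $\cof(X)^T = -X$ on $H_1$ and $\cof(X)^T = X$ on $H_2$. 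Combined with the opposite signs of $\det(X)$ on the two planes, the product $\det(X)\,\cof(X)^T$ equals $r^2 X$ in both cases, so
\[
D\mathcal{A}(X) \;=\; \frac{X + \det(X)\cof(X)^T}{\mathcal{A}(X)} \;=\; \frac{(1+r^2)X}{1+r^2} \;=\; X,
\]
and therefore $A(X) = XJ$ on the whole of $H_1 \cup H_2$.

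There is no serious obstacle: the lemma is essentially the two algebraic identities $X^T X = r^2\,\Id_2$ and $\cof(X)^T = \pm X$, which hold precisely on the two planes $H_1$ and $H_2$, together with the matching relation $\det(X) \cof(X)^T = r^2 X$. The conceptual point is that $H_1$ and $H_2$ are the maximal subspaces of $\mathbb{R}^{2\times 2}$ on which $X^T X$ is scalar and $\cof(X)^T$ is proportional to $X$ simultaneously — a fact that will make it cheap for the subsequent counterexample argument (following \cite[Example 4.41]{KIRK}) to produce Lipschitz maps with gradient in $H_1 \cup H_2$ and automatically satisfy $\curl(B(D\psi)) = 0$ since $B$ is constant there. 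The only care needed in the calculation itself is to keep the sign conventions for $J$ and for $\cof$ (as fixed in Section \ref{aarea}) consistent; once this bookkeeping is done, both identities fall out directly.
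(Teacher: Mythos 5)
Your proposal is correct and follows essentially the same route as the paper: a direct verification that $X^TX=(a^2+b^2)\Id$, $\A(X)=1+a^2+b^2$ and $\det(X)\cof(X)^T=(a^2+b^2)X$ on $H_1\cup H_2$, from which $A(X)=XJ$ and $B(X)=-J$ follow (the paper merely packages the two planes into one parametrization $\alpha=\pm1$, $\alpha\beta=-1$). Note that the sign you obtain, $B(X)=-J$, agrees with the paper's own computation and with the later use $\curl(B(D\psi))=\curl(-J)=0$; the ``$B(X)=J$'' in the lemma's statement is a sign slip, and what matters for Theorem \ref{IRR} is only that $B$ is constant on $H_1\cup H_2$, as you point out.
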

\begin{proof}
Let us consider the matrix
\[
X = \left(
\begin{array}{cc}
a&\alpha b\\
b&\beta a
\end{array}
\right),
\]
with $\alpha= \pm 1$, $\alpha\beta = -1$, $a,b \in \R$. Clearly, every matrix in $H_1\cup H_2$ is of this form. We have
\[
\|X\|^2 = 2(a^2  + b^2),\quad \det(X)^2 = (a^2 + b^2)^2,
\]
hence
\[
\A(X) = 1 + a^2 + b^2.
\]
Moreover,
\[
\cof(X)^T = \left(
\begin{array}{cc}
\beta a&-b\\
-\alpha b& a
\end{array}
\right),
\]
thus
\[
X + \det(X)\cof(X)^T =
\left(
\begin{array}{cc}
a&\alpha b\\
b&\beta a
\end{array}
\right)
+
(\beta a^2 - \alpha b^2)\left(
\begin{array}{cc}
\beta a&-b\\
-\alpha b& a
\end{array}
\right)=
\A(X)\left(
\begin{array}{cc}
a&\alpha b\\
b&\beta a
\end{array}
\right) =\A(X) X.
\]
Therefore, $A(X) = XJ$. We now prove that $B(X) = -J$. To do so, we compute
\[
X^TX = 
\left(
\begin{array}{cc}
a& b\\
\alpha b&\beta a
\end{array}
\right)
\left(
\begin{array}{cc}
a&\alpha b\\
b&\beta a
\end{array}
\right) = (a^2 + b^2) \id = \frac{\|X\|^2}{2}\id.
\]
Hence
\[
\A(X)B(X) = -\left(1 +\frac{\|X\|^2}{2}\right)J = -\A(X)J.
\]
This concludes the lemma.
\end{proof}
In \cite[Example 4.41]{KIRK} it is shown that there exists a Sobolev map $\psi\in W^{1,p}(\Omega,\R^2), p > 2,$ such that $D\psi$ belongs, at almost every point of $\Omega$, to $H_1\cup H_2$, and moreover
\[
|\{x \in \Omega: D \psi(x) = 0\}| > 0
\]
but $\psi$ is non-constant. By Lemma \ref{algebra}, we immediately deduce that this function $\psi$ solves
\[
\curl(B(D \psi(x))) = \curl(-J) = 0,
\]
hence it is a solution to the inner variations equations for the area function. We want to construct such a $\psi$ by using the same methods of \cite[Example 4.41]{KIRK}, but we moreover want to construct it in such a way that for every open subset $\mathcal{V}\subset \Omega$
\[
|\{y \in \mathcal{V}: D \psi(y) = 0\}| > 0
\]
but $\psi$ is non-constant in $\mathcal{V}$. In this way, we would deduce that $\psi$ cannot be $C^1$ on any open set. In fact, suppose by contradiction that there exists a connected open set $\mathcal{V}$ such that $\psi \in C^1(\mathcal{V})$. Let $\mathcal{W} \subset \mathcal{V}$ be an open, compactly contained subset of $\mathcal{V}$. Since $H_1,H_2$ are closed, we obtain that
\[
A_i \doteq \{y\in \overline{\mathcal{W}}: D \psi(y) \in H_i\}
\]
are closed sets, contained in $\mathcal{W}$, for $i = 1,2$, and that moreover
\[
\overline{\mathcal{W}} = A_1\cup A_2.
\]
There are two cases: $A_1$ does not contain any ball or there exists $B_r(y) \subset A_1$. If $\intt(A_1) = \emptyset$, then $A_2$ is dense in $\overline{\mathcal{W}}$. Since it is also closed, then $\overline{\mathcal{W}} = A_2$. In particular, on the open set $\mathcal{W}$, one has $D\psi \in H_2$. This implies that $\psi$ is harmonic and smooth. It is well-known that for a non-constant harmonic function $\psi$
\[
|\{y \in \mathcal{W}: D\psi(y) = 0\}| =0,
\]
which is a contradiction with $|\{y \in \mathcal{W}: D \psi(y) = 0\}| > 0$. Therefore, we are left with the case $B_r(y) \subset A_1$. But then, exactly the same reasoning applied with $B_r(y)$ instead of $\mathcal{W}$ yields the same contradiction.
\\
\\
This discussion motivates the fact that, in order to conclude that we can find a solution that is not $C^1$ in any open set of $\Omega$, we need the following
\begin{lemma}\label{PSI}
There exists an open set $\Omega$ and a $W^{1,p}$, $p > 2$, map $\psi: \Omega \to \R^2$ with the property for every open set $\mathcal{V} \subset \Omega$,
\begin{itemize}
\item $\psi$ is non-constant on $\mathcal{V}$;
\item $|\mathcal{V}\cap\{y \in \Omega: D\psi(y) = 0\}| > 0$.
\end{itemize}
\end{lemma}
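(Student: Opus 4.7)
My plan is to construct $\psi$ as the superposition of rescaled copies of Kirchheim's building block, placed on a disjoint family of balls so dense that every open subset of $\Omega$ contains one of them.

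Setting $\Omega = B_1(0)\subset \R^2$, I would first fix the map $\psi_0\in W^{1,p}(B_1(0);\R^2)$, $p>2$, from \cite[Example 4.41]{KIRK}: it is non-constant, satisfies $D\psi_0\in H_1\cup H_2$ almost everywhere, and has $|\{D\psi_0=0\}|>0$. A short adaptation of the underlying convex-integration construction, started from the zero affine map (admissible because $0\in H_1\cap H_2$), yields $\psi_0$ with vanishing trace $\psi_0|_{\partial B_1}=0$. This boundary condition is essential for gluing scaled copies together.

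Next, I would produce inductively a pairwise disjoint family of open balls $\tilde B_i = B_{r_i}(x_i)\subset \Omega$, $i\in \N$, with two properties: (a) $\sum_i r_i^2 < \infty$; (b) every nonempty open set $\mathcal{V}\subset \Omega$ contains some $\tilde B_i$. For (b), fix a countable basis $\{D_k\}_{k\in \N}$ of open balls for the topology of $\Omega$ and process pairs $(k,n)\in \N\times \N$ in some enumeration: at each step, select a new ball inside $D_k$, disjoint from all previously chosen balls, of radius at most $2^{-(k+n)}$. Since the already-selected balls form a finite union of closed balls inside $\Omega$ and the radius of the new ball can be taken arbitrarily small, this selection is always possible, and every fixed $D_k$ eventually hosts such a ball.

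With the family fixed, define
\[
\psi(x) \doteq r_i\,\psi_0\!\left(\frac{x - x_i}{r_i}\right) \text{ for } x\in \tilde B_i, \qquad \psi(x)\doteq 0 \text{ for } x\in \Omega\setminus \bigcup_i \tilde B_i.
\]
Because $\psi_0$ vanishes on $\partial B_1$ in the trace sense, the zero-extension of each rescaled piece to $\Omega$ sits in $W^{1,p}$, and the chain rule gives $\|D\psi\|_{L^p(\tilde B_i)}^p = r_i^2\,\|D\psi_0\|_{L^p(B_1)}^p$, whence $W^{1,p}$-summability over $i$ follows from (a). The two bullet points of the lemma are then immediate from (b): given a nonempty open $\mathcal{V}\subset \Omega$, pick $i$ with $\tilde B_i\subset \mathcal{V}$; on $\tilde B_i$ the map $\psi$ is a non-trivial rescaling of the non-constant $\psi_0$, hence non-constant on $\mathcal{V}$, and $|\{D\psi=0\}\cap \mathcal{V}| \ge r_i^2\, |\{D\psi_0=0\}| > 0$.

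The main obstacle is the preliminary claim that $\psi_0$ can be arranged to vanish on $\partial B_1$. This amounts to initializing Kirchheim's in-approximation at the trivial zero affine data, which is feasible precisely because $0\in H_1\cap H_2$ is one of the five admissible gradient values realized by the original construction in \cite{KIRK}. Once this adaptation is secured, the rest of the argument is an elementary covering-and-gluing construction.
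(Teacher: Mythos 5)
There is a genuine gap, and it is exactly the step you flag as "the main obstacle": the existence of a Kirchheim-type map $\psi_0$ with $D\psi_0\in H_1\cup H_2$ a.e., $|\{D\psi_0=0\}|>0$, non-constant, \emph{and} zero trace on $\partial B_1$. Your justification --- "initialize the in-approximation at the zero affine map, admissible because $0\in H_1\cap H_2$" --- does not work, because the map of \cite[Example 4.41]{KIRK} is not produced by convex integration into $H_1\cup H_2$ at all. It is $\psi_0=f\circ F^{-1}$, where $f$ solves a five-gradient inclusion $Df\in\{A_1,\dots,A_5\}$ obtained by a Baire category argument inside an auxiliary open set $\mathcal{U}\subset\Sym(2)$, and $F$ is a quasiregular homeomorphism solving a Beltrami equation whose coefficient depends on $f$; the boundary values of $\psi_0$ are therefore dictated by the (essentially affine, gradient in $\mathcal{U}$) boundary data of $f$ composed with $F^{-1}$, and there is no freedom to make them vanish. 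Nor can one hope to run a direct in-approximation into $H_1\cup H_2$ starting from the zero datum by elementary splittings: every $X\in H_1\cup H_2$ satisfies $|\det X|=\tfrac12\|X\|^2$, so $0$ is rank-one isolated in $H_1\cup H_2$ and no rank-one segment emanates from the zero state; the fact that $0$ can nevertheless be attained as a gradient on a set of positive measure by a non-constant solution is precisely the deep content of the five-matrix construction, not a routine initialization. So the claim on which your whole gluing scheme rests is unproven, and the argument offered for it is not valid.

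Granting that claim, the rest of your construction (the disjoint family of balls meeting every open set, the scaling $r_i\psi_0((x-x_i)/r_i)$, the $W^{1,p}$ summability, and the measure estimates) is correct, and it would even preserve $D\psi\in H_1\cup H_2$ a.e., which is what Theorem \ref{IRR} actually uses even though the statement of the lemma does not list it. The paper avoids the boundary-value problem entirely by localizing at the level of the five-gradient map: Lemma \ref{SUFF} shows, via a Baire category argument (removing the meager set of maps affine on some rational ball), that the typical admissible $f$ has every level set $\mathcal{A}_i=\{Df=A_i\}$ of positive measure in every open subset of $B_1(0)$; this density property is then transported to $\psi=f\circ F^{-1}$ on $\Omega=F(B_1(0))$ using the $N^{-1}$ property of the quasiregular homeomorphism $F$ and the openness of $F^{-1}$. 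If you want to salvage your approach, you would have to either prove the zero-trace variant of Kirchheim's construction (nontrivial, and not available by the argument you sketch) or, as the paper does, build the "in every open set" property into the category argument before composing with $F$.
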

To prove Lemma \ref{PSI}, it is sufficient to show the following:
\begin{lemma}\label{SUFF}
There exists a Lipschitz map $f:B_1(0)\subset \R^2 \to \R^2$ with the following properties:
\begin{itemize}
\item $Df(x) \in \{A_1,\dots,A_5\}$ for five $2\times 2$ matrices $A_1,\dots, A_5$ (explicitely written in \cite[Example 4.41]{KIRK}), for a.e. $x \in B_1(0)$;
\item If $\mathcal{A}_i \doteq \{x \in B_1(0): Du(x) = A_i\}$, then for every open subset of $B_1(0)$, $B$, it holds
\[
|B\cap \mathcal{A}_i| \neq 0,\quad \forall i =1,\dots, 5.
\]
\end{itemize}
\end{lemma}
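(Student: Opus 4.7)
The approach is to iterate Kirchheim's construction on a dense family of sub-balls of $B_1(0)$, thereby upgrading the single ``wild'' piece produced in \cite[Example 4.41]{KIRK} to a map whose wildness is visible in every open ball. The key input I extract from that example is a \emph{localized} statement: for every open ball $B \subset \R^2$ and every affine map $L(x) = A_j x + c$ with $j \in \{1,\dots,5\}$, there exists a Lipschitz extension $g_{B,L}\colon \bar{B} \to \R^2$ of $L|_{\partial B}$ satisfying $D g_{B,L} \in \{A_1,\dots,A_5\}$ a.e.\ on $B$ and $|B \cap \{D g_{B,L} = A_i\}| > 0$ for each $i$. Such a statement follows from Kirchheim's convex-integration scheme, which exploits the rank-one directions available inside the lamination-convex hull of the $T_5$ configuration $\{A_1,\dots,A_5\}$, and can be arranged so that the extension is piecewise affine at every finite stage of the iteration producing it.

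Given this tool, I enumerate the open balls with rational centers and radii contained in $B_1(0)$ as $\{B_n\}_{n \ge 1}$ and set $f_0(x) \doteq A_1 x$. I inductively build Lipschitz maps $f_n$ with $D f_n \in \{A_1,\dots,A_5\}$ a.e.\ as follows. At step $n+1$, I select an open sub-ball $B'_{n+1} \subset B_{n+1}$ on which $f_n$ is affine, say $f_n(x) = A_j x + c$; such a sub-ball exists because each $f_n$ is by construction piecewise affine. I then replace $f_n$ on $B'_{n+1}$ by $g_{B'_{n+1}, L}$ with $L(x) \doteq A_j x + c$, and define $f_{n+1}$ to coincide with $f_n$ outside $B'_{n+1}$. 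The radius of $B'_{n+1}$ is chosen small enough that $\|f_{n+1} - f_n\|_\infty \le 2^{-n}$ (possible since $g_{B,L}$ differs from $L$ by at most $\diam(B)\cdot\max_i\|A_i\|$) and also small enough that $|B'_{n+1}| \le 2^{-(n+1-k)}\varepsilon_k/4$ for every $k \le n$, where $\varepsilon_k \doteq \min_i |B_k \cap \{D f_k = A_i\}| > 0$ is recorded at step $k$.

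The uniform Lipschitz bound $\|D f_n\|_\infty \le \max_i \|A_i\|$ and the telescoping estimate on $\|f_{n+1}-f_n\|_\infty$ yield a Lipschitz limit $f \doteq \lim_n f_n$. Because $\sum_n |B'_n| < \infty$, the Borel--Cantelli lemma gives that almost every $x \in B_1(0)$ lies in only finitely many $B'_n$, so the pointwise sequence $D f_n(x) \in \{A_1,\dots,A_5\}$ eventually stabilizes; combined with the uniform Lipschitz bound and dominated convergence, this yields $D f_n \to D f$ in $L^1$, hence $D f \in \{A_1,\dots,A_5\}$ a.e. For each rational ball $B_k$, the modifications at steps $m > k$ only decrease $|B_k \cap \{D f_m = A_i\}|$ when $i$ happens to equal the affine value $A_{j_m}$ being overwritten, and the cumulative decrease over all such steps is bounded by $\sum_{m > k} |B'_m| < \varepsilon_k/2$; consequently $|B_k \cap \{D f = A_i\}| \ge \varepsilon_k/2 > 0$ for every $i$. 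Since the rational balls form a basis of the topology of $B_1(0)$, every open $B \subset B_1(0)$ contains some $B_k$, yielding the desired conclusion.

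The main obstacle is extracting the localized statement in the first paragraph with arbitrary affine boundary data $A_j x + c$ and with piecewise affine structure at finite stages of the construction. In the original proof of \cite[Example 4.41]{KIRK}, the generic output of the Baire category argument is not piecewise affine, so this statement must be obtained either by truncating that argument at a suitable finite stage of an in-approximation, or by reconstructing the example through an explicit iteration based on the rank-one connections of the $T_5$ configuration starting from each extreme point $A_j$. An alternative that sidesteps the iteration entirely is a direct Baire category argument in a suitable space of Lipschitz maps with gradient in the lamination-convex hull of $K$, showing that the conditions $D f \in K$ a.e.\ together with $|B_n \cap \{D f = A_i\}| > 0$, indexed over all rational $B_n$ and all $i$, together define a residual subset.
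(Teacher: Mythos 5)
There is a genuine gap, and it sits exactly where you flag the ``main obstacle'': the localized statement you take as key input is not just hard to extract from \cite[Example 4.41]{KIRK} --- it is false for most of the boundary data you need. If $g_{B,L}$ is Lipschitz on $\bar B$, equals $L(x)=A_jx+c$ on $\partial B$, and satisfies $Dg_{B,L}\in\{A_1,\dots,A_5\}$ a.e., then the divergence theorem applied to $g_{B,L}-L\in W^{1,\infty}_0(B;\R^2)$ forces $\fint_B Dg_{B,L}\dx=A_j$, i.e. $A_j=\sum_i\lambda_iA_i$ with $\lambda_i=|B\cap\{Dg_{B,L}=A_i\}|/|B|$. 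If all five sets have positive measure, $A_j$ must lie in the interior of the convex hull of $\{A_1,\dots,A_5\}$, which is impossible whenever $A_j$ is an extreme point of that hull --- and at most one of five matrices can fail to be extreme, so the statement cannot hold ``for every $j\in\{1,\dots,5\}$'' as your iteration requires (already the first step, with datum $A_1x$, is in jeopardy). A second, related misconception: the Kirchheim--Preiss set is not a $T_5$ configuration with exploitable rank-one connections; its whole point is that the five matrices are pairwise \emph{not} rank-one connected, and the convex integration in \cite{KIRK} takes place inside an auxiliary open set $\mathcal{U}\subset\Sym(2)$ whose gradients are ``stable only near $K$'', not in a lamination hull anchored at the $A_j$'s. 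This also breaks your induction independently of the boundary-data issue: once you replace $f_n$ on $B'_{n+1}$ by an exact solution with gradients in $K$, that piece cannot be piecewise affine (adjacent affine pieces would need a rank-one jump between two of the $A_i$), so your claim that ``each $f_n$ is by construction piecewise affine'', used to find affine sub-balls at later stages, fails as soon as some $B_{n+1}$ lies inside a previously modified ball.

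The paper's route avoids both problems and contains the idea your sketch is missing. It works by Baire category in the space $X$ obtained as the $L^\infty$-closure of piecewise affine maps with gradient in the \emph{open} set $\mathcal{U}$: the typical map has $Du\in K$ a.e.; the sets of maps affine on some rational ball are closed and nowhere dense (empty interior is shown by splitting an affine piece with gradient $A\in\mathcal{U}$ along a rank-one direction \emph{inside} $\mathcal{U}$, via \cite[Proposition 3.4]{KIRK} --- note the perturbation happens at matrices of $\mathcal{U}$, never at the $A_j$'s); and then the decisive step is the rigidity theorem for the four-gradient problem, \cite[Theorem 4.33]{KIRK}: if on some ball one of the five gradients were attained only on a null set, the map would be affine there, contradicting genericity. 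That rigidity input is what converts ``nowhere affine'' into ``every $\mathcal{A}_i$ has positive measure in every ball'', and it is absent from your argument; your closing alternative (making the positive-measure conditions residual directly) is close in spirit to the paper's proof, but without four-gradient rigidity you have no mechanism to produce positive measure of each individual gradient on each ball.
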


If Lemma \ref{PSI} holds, then the previous discussion constitutes the proof of Theorem \ref{IRR}. Let us now explain how Lemma \ref{SUFF} implies Lemma \ref{PSI}.  

\begin{proof}[Proof of Lemma \ref{PSI}]
This proof is exactly the same described in \cite[Example 4.41]{KIRK}, and we report it here for the reader's convenience. Suppose a map $f$ as the one of Lemma \ref{SUFF} exists. We can define the mapping $\psi$ as in \cite[Example 4.41]{KIRK}, i.e. $\psi(x) \doteq f(F^{-1}(x))$, where $F: \R^2\to\R^2$ is a suitable $W^{1,p}$, $p > 2$ quasiregular homeomorphism. Since we do not need to explicitely introduce quasiregular maps or Beltrami equations, we will not enter in the details of this theory. We refer the interested reader to the references given in \cite[Example 4.41]{KIRK}. The open set $\Omega$ is $\Omega \doteq F(B_1(0))$. The map $F$ satisfies a suitable Beltrami equation, introduced in such a way that for a.e. $y \in F(\mathcal{A}_1\cup \mathcal{A}_2)$, we have $D\psi(y) \in H_1$, while for a.e. $y \in F(\mathcal{A}_2\cup \mathcal{A}_3\cup \mathcal{A}_4 \cup \mathcal{A}_5)$, we have $D\psi(y) \in H_2$. Moreover, by the computations of \cite[Example 4.41]{KIRK} (in particular, by the equation following (4.10)), we find that 
\begin{equation}\label{nonc}
 y \in F(\mathcal{A}_1) \Rightarrow D \psi(y) \neq 0.
\end{equation}
Now let $\mathcal{V} \subset \Omega$ be open. We want to show that $\psi$ is non-constant on $\mathcal{V}$ and
\[
|\{x \in \mathcal{V}: D\psi(x) = 0\}| > 0.
\]
We claim
\begin{equation}\label{claim}
|\mathcal{V}\cap F(\mathcal{A}_i)|> 0, \forall i \in \{1,\dots 5\}.
\end{equation}
Indeed, if for some $i$ we had
\[
|\mathcal{V}\cap F(\mathcal{A}_i)| = 0,
\]
then, making repeated use of the fact that $F$ is bijective,
\begin{equation}\label{contrinn}
0 = |F(F^{-1}(\mathcal{V}))\cap F(\mathcal{A}_i)| = |F(F^{-1}(\mathcal{V})\cap \mathcal{A}_i)|.
\end{equation}
From \cite[Corollary 3.7.6]{IWABOO} we see that $F$ has the $N^{-1}$ property, i.e. for every Borel set $A$,
\[
|A| = 0 \Rightarrow |F^{-1}(A)| = 0.
\]
With this, we can infer from \eqref{contrinn} that there exists $i$ such that
\[
|F^{-1}(\mathcal{V})\cap \mathcal{A}_i| = 0.
\]
Since $F^{-1}$ is an open mapping, then $F^{-1}(\mathcal{V})$ is an open set, hence the previous equality is in contradiction with the properties of the map $f$. Using \eqref{claim}, we immediately see that, on $\mathcal{V}$, $\psi$ cannot be constant since, as noted in \eqref{nonc}, $D\psi \neq 0$ on $F(\mathcal{A}_1)$. On the other hand, $y \in F(\mathcal{A}_2) \Rightarrow D\psi(y) = 0$. This implies, again by \eqref{claim}, that $D\psi(y) = 0$ on a set of positive measure inside $\mathcal{V}$, but $\psi$ is not constant on $\mathcal{V}$.
\end{proof}

In the next and final subsection we will show Lemma \ref{SUFF}.

\subsection{Convex integration: proof of Lemma \ref{SUFF}}

To prove Lemma \ref{SUFF}, we use the Baire Category arguments of \cite{KIRK}. First, we need to recall the following:

\begin{Def}
Let $\mathcal{U}\subset\R^{n\times m}$ be bounded and $K\subset \R^{n \times m}$ be closed. We say that gradients in $\mathcal{U}$ are stable only near $K$ if for every $\varepsilon >0$, one can find $\delta =\delta(\varepsilon) > 0$ such that, if $A \in \mathcal{U}$ and $\dist(A,K) > \varepsilon$, then there exists a piecewise affine map $\varphi \in \Lip(\R^n,\R^m)$ with bounded support such that
\begin{itemize}
\item $D \varphi(x) + A \in \mathcal{U}$ for a.e. $x \in \R^n$;
\item $\displaystyle \int\|D \varphi\|\dx \ge \delta|\spt(\varphi)|.$
\end{itemize}
\end{Def}

The reason why this definition is useful is given by the following result, see \cite[Proposition 3.17, Corollary 3.18]{KIRK}. Let
\[
\mathcal{P}\doteq\{u\in \Lip(\Omega,\R^n): u \text{ piecewise affine, } Du(x) \in \mathcal{U} \text{ a.e. in } \Omega\}
\]
and define the complete metric space
\begin{equation}\label{Xmet}
X \doteq \overline{\mathcal{P}}^{\|\cdot\|_{L^\infty}}.
\end{equation}

\begin{prop}
Let the gradients of $\mathcal{U}$ be stable only near a closed set $K$. Then the typical map $u \in X$ has the property
\[
Du \in K \text{ a.e..}
\]
\end{prop}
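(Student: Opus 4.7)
The plan is a standard Baire-category argument in the spirit of Cellina--De Blasi--Pianigiani, used also in Kirchheim's book. First I would check that $X$ is a complete metric space: since $\mathcal{U}$ is bounded, every $u\in\mathcal{P}$ is $M$-Lipschitz with $M\doteq\sup_{A\in\mathcal{U}}\|A\|$, and this Lipschitz bound passes to the $L^\infty$-closure, so $X$ is a closed subset of $C^0(\overline\Omega,\R^m)$. On $X$ I would then introduce the bounded functional
\[
\Phi(u)\doteq\int_\Omega |Du|^2\,dx,
\]
and observe that $\Phi$ is lower semicontinuous with respect to the $L^\infty$ topology: if $u_n\to u$ in $L^\infty$ with $u_n\in X$, then $Du_n$ is bounded in $L^\infty$, hence $Du_n\rightharpoonup Du$ weakly in $L^2$, and weak lower semicontinuity of the squared $L^2$-norm yields $\Phi(u)\le\liminf_n\Phi(u_n)$. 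A classical consequence of Baire's theorem is that the set $G\subset X$ of points of continuity of a bounded real-valued lsc function on a complete metric space is a dense $G_\delta$; this ``typical'' set is the one on which I will prove the differential inclusion.

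The key analytic feature of continuity points is that they force strong $L^2$-convergence of gradients along any $L^\infty$-approximating sequence. Indeed, if $u\in G$ and $u_n\to u$ in $L^\infty$ with $u_n\in X$, then $Du_n\rightharpoonup Du$ in $L^2$ and $\|Du_n\|_{L^2}^2=\Phi(u_n)\to\Phi(u)=\|Du\|_{L^2}^2$; in the Hilbert space $L^2$, weak convergence combined with norm convergence yields strong convergence.

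I would then conclude by showing that every $u\in G$ satisfies $Du\in K$ a.e., arguing by contradiction: suppose $\dist(Du,K)>2\varepsilon$ on a set $E\subset\Omega$ with $|E|>0$. Picking $v\in\mathcal{P}$ with $\|v-u\|_\infty$ small and applying the previous paragraph to the sequence converging to $u$, $Dv$ is close to $Du$ in $L^2$, so Chebyshev's inequality forces $\dist(Dv,K)>\varepsilon$ on a subset $E'\subset E$ with $|E'|\ge|E|/2$. Since $v$ is piecewise affine with $Dv=A_i\in\mathcal{U}$ on pieces $\Omega_i$, there must exist an index $i$ with $|\Omega_i\cap E'|>0$ and $\dist(A_i,K)>\varepsilon$. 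The stability hypothesis then produces a piecewise affine, compactly supported $\varphi\in\Lip(\R^n,\R^m)$ with $D\varphi+A_i\in\mathcal{U}$ a.e.\ and $\int|D\varphi|\ge\delta|\spt(\varphi)|$. Using a Vitali-type covering, I would insert a disjoint family of scaled translates $r\,\varphi((\cdot-y_j)/r)$ inside $\Omega_i\cap E'$ and define
\[
v_r(x)\doteq v(x)+\sum_j r\,\varphi\!\left(\frac{x-y_j}{r}\right).
\]
Then $v_r\in\mathcal{P}$ (the gradient still lies in $\mathcal{U}$ thanks to the stability property), $\|v_r-v\|_\infty\le r\|\varphi\|_\infty\to 0$, yet rescaling $\int|D\varphi|\ge\delta|\spt(\varphi)|$ and summing over the Vitali covering yields $\|Dv_r-Dv\|_{L^1}\ge c\,\delta|E\cap\Omega_i|$ uniformly in $r$. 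Cauchy--Schwarz turns this into $\|Dv_r-Dv\|_{L^2}\ge c'>0$, and coupled with $\|Dv-Du\|_{L^2}$ being small this contradicts the strong $L^2$-convergence of gradients along $v_r\to u$ in $L^\infty$ established above.

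The main obstacle is the transfer of the ``$Du$ far from $K$'' information from $u$ to the piecewise affine approximation $v$: this cannot be done from $L^\infty$-closeness alone, and it is precisely the continuity of $\Phi$ at $u$, forcing strong $L^2$-convergence of gradients, that makes the transfer possible. The remaining bookkeeping---verifying $v_r\in\mathcal{P}$, the scaling of the $L^\infty$ amplitude and $L^1$ gradient mass, and the Vitali packing density---is routine.
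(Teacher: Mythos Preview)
The paper does not give its own proof of this proposition: it is quoted verbatim from \cite[Proposition 3.17, Corollary 3.18]{KIRK} and used as a black box. Your outline is precisely Kirchheim's argument there --- the Baire functional $\Phi(u)=\int_\Omega|Du|^2$, its continuity points as the residual set, and the ``weak $+$ norm $\Rightarrow$ strong'' mechanism --- so there is nothing to compare on the level of strategy.

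There is one quantitative point in your contradiction step that needs tightening. You select a \emph{single} piece $\Omega_i$ with $|\Omega_i\cap E'|>0$ and obtain $\|Dv_r-Dv\|_{L^1}\ge c\,\delta\,|E\cap\Omega_i|$. This lower bound depends on the particular approximant $v$: as you let $v\to u$ in $L^\infty$ to close the contradiction, the decomposition into pieces changes and $|E\cap\Omega_i|$ may degenerate to $0$, so you lose the uniform gap you need against $\|Dv-Du\|_{L^2}\to 0$. The fix is simply to perturb on \emph{every} piece $\Omega_i$ with $\dist(A_i,K)>\varepsilon$ (using the corresponding $\varphi_i$, all sharing the same $\delta=\delta(\varepsilon)$ from the stability hypothesis). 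The union of these pieces contains $E'$, hence has measure at least $|E|/2$ independently of $v$, and the Vitali packing then yields $\|Dv_r-Dv\|_{L^1}\ge c\,\delta\,|E|$ uniformly. A minor related point: you cannot literally place the scaled supports ``inside $\Omega_i\cap E'$'', since $E'$ need not be open; place them inside the open set $\Omega_i$ and use that their total measure dominates $|\Omega_i\cap E'|$. With these adjustments your sketch is complete and matches the cited source.
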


We now show Lemma \ref{SUFF}, but first we need to explain how to obtain the matrices $\{A_1,\dots,A_5\}$ in the statement of the Lemma. These matrices are obtained from another set of five symmetric matrices $K\doteq \{P_{F_0},P_{B_0},P_{R_0},P_{L_0},P_{H_0}\}$ simply by considering $M(K - P_{F_0}) = \{A_1,\dots, A_5\}$, where $M$ is a suitable $2\times 2$ matrix. The importance of the set $K$, found by Kirchheim and D. Preiss in \cite[Construction 4.38]{KIRK}, is due to the fact that it is the first example in the literature of a set of five "non-rigid" matrices, i.e. such that there exists a non-affine map $u \in \Lip(B_1(0),\R^2)$ that fulfills $$Du(x) \in K$$ for a.e. $x \in B_1(0)$. The strategy they use is to find an open subset $\mathcal{U}$ of $\Sym(2)$ such that gradients of $\mathcal{U}$ are stable only near $K$, see \cite[Construction 4.38]{KIRK}. We can now start the:

\begin{proof}[Proof of Lemma \ref{SUFF}]
Following the previous notation we consider $K = \{P_{F_0},P_{B_0},P_{R_0},P_{L_0},P_{H_0}\}$ and $\mathcal{U}$ be the open subset of $\Sym(2)$ found by Kirchheim and D. Preiss in \cite[Construction 4.38]{KIRK}. We consider $X$ defined as in \eqref{Xmet}. Now enumerate the points with rational coordinates in $B_1(0)$, $\{q_i\}_{i \in \N}$, and define the sets
\[
X_{q_i,r,j}\doteq\{u \in X: u\text{ is affine in } B_{r}(q_i)\}.
\]
for rational $0 < r < \dist(q_i,\partial B_1(0))$ and $1\le j \le 5$. We aim to show $Y\doteq \bigcup_{i,r,j}X_{q_i,r,j}$ is meager. If this is the case, then $Z\doteq Y^c\cap \{u \in X: Du(x) \in K, \text{ for a.e. }x\in \Omega\}$ is residual in $X$. Baire Theorem tells us that it is non-empty, and obviously for any $u \in Z$, one has
\[
Du(x) \in K = \{P_{F_0},P_{B_0},P_{R_0},P_{L_0},P_{H_0}\}, \text{ for a.e. }x \in \Omega.
\]
Considering $f(x) \doteq M(u(x) - P_{F_0}x)$, where $M$ was introduced before the proof of the present Lemma, we get
\[
Df(x) \in \{A_1,A_2,A_3,A_4,A_5\}, \text{ a.e.}.
\]
Moreover, for every $1 \le j \le 5$, $q\in \mathbb{Q}^2\cap \Omega$, rational radius $0< r< \dist(x,\partial \Omega)$, 
\begin{equation}\label{balls}
|\mathcal{A}_j\cap B_{r}(q)| > 0.
\end{equation}
Indeed, if $|\mathcal{A}_j\cap B_{r}(q)| = 0$, by the rigidity for the four gradients problem, see \cite[Theorem 4.33]{KIRK}, we get that $f$ is necessarily affine on $B_r(q)$, against the definition of $Z$. Since \eqref{balls} is clearly equivalent to
\[
|\mathcal{A}_j\cap \mathcal{V}| > 0
\]
for every open subset $\mathcal{V}\subset \Omega$ and $1\le j \le 5$, we would then conclude the proof. In order to show that $Y$ is meager, we prove that $X_{q_i,r,j}$ are closed sets with empty interior. The closedness inside the complete metric space $X$ is straighforward, since a sequence of affine functions converging in $L^\infty$ need to converge to an affine function. Now suppose by contradiction that for some $i,r,j$, $X_{q_i,r,j}$ has non-empty interior. In particular, we suppose we have that for some $\alpha > 0$ and $u \in X$, 
\[
\{v\in X: \|u - v\|_{\infty} < \alpha\} \subset X_{q_i,r,j}.
\]
Since $u \in X$, we can pick a function $\bar u \in \mathcal{P}$ such that $\|\bar u - u\| \leq \frac{\alpha}{4}$ and $D\bar{u} \in \mathcal{U}$. We also know, by assumption, that $\bar{u}$ is affine on $B_r(q_i)$, say $\bar u = Ax + b$ on $B_r(q_i)$ with $A \in \mathcal{U}$. Since $A \in \mathcal{U}$, that is an open subset of $\Sym(2)$, as follows by the construction of \cite{KIRK}, then we can easily find two matrices $B$ and $C$ in $\mathcal{U}$ such that $\rank(B - C) = 1$ and $\frac{B + C}{2} = A$. For instance, one can take
\[
B\doteq A + \lambda E_{11},\quad C\doteq  A -\lambda E_{11},
\]
where $\lambda> 0$ is a sufficiently small parameter and $$E_{11} \doteq \left(\begin{array}{cc}1 & 0\\ 0&0\end{array}\right).$$ By \cite[Proposition 3.4]{KIRK}, recalled below, for every $\varepsilon > 0$ we can find a Lipschitz and piecewise affine map $w: B_{\frac{r}{2}}(q_i)\to \R^2$ with
\begin{itemize}
\item $Dw(x) \in \mathcal{U}$ a.e.;
\item $w(x) = Ax$ on $\partial B_{r/2}(q_i)$;
\item $\|w - A\|_\infty \le \varepsilon$.
\end{itemize}
Of course, if we traslate $w$ with $\bar w\doteq w + b$, we have
\begin{itemize}
\item $D\bar{w}(x) \in \mathcal{U}$ a.e.;
\item $\bar{w}(x) = \bar u(x)$ on $\partial B_{r/2}(q_i)$;
\item $\|\bar{w} - \bar{u}\|_{L^{\infty}(B_{r/2}(q_i))} \le \varepsilon$.
\end{itemize}
Moreover, the same proposition yields the following property
\[
|\{x \in B_{r/2}(q_i): D \bar w(x) = B\}|\ge \frac{(1 - \varepsilon)}{2}|B_{r/2}(q_i)|
\]
and
\[
|\{x \in B_{r/2}(q_i): D \bar w(x) = C\}|\ge \frac{(1 - \varepsilon)}{2}|B_{r/2}(q_i)|.
\]
In particular, this implies that $\bar w$ cannot be affine on $B_{r/2}(q_i)$. We finally get a contradiction, because the map
\[
z(x) \doteq
\begin{cases}
\bar u(x), & \text{ if } x \in \Omega \setminus B_{r/2}(q_i)\\
\bar w(x), & \text{ if } x \in B_{r/2}(q_i)
\end{cases}
\]
is piecewise affine, Lipschitz, $\|z - \bar u\|_{L^\infty(\Omega} \le \varepsilon$ and $Dz(x) \in \mathcal{U}$, for a.e. $x \in \Omega$. If $\varepsilon < \frac{\alpha}{4}$, then we would obtain that $z$ is affine on $B_{r/2}(q_i)$, against the construction of $\bar w$. This concludes the proof.
\end{proof}

We conclude by recalling here \cite[Proposition 3.4]{KIRK} for the reader's convenience:

\begin{prop}\label{tocite}
Let $A,B,C \in \Sym(n)$, with $\rank(B - C) = 1$, and $A = tB + (1 - t)C$, for some $t \in [0,1]$. Let also $\Omega \subset \R^n$ be a fixed open domain. Then, for every $\varepsilon > 0$, one can find a Lipschitz piecewise affine map $f: \Omega\to \R^n$ such that
\begin{itemize}
\item $f(x) = Ax$ on $\partial\Omega$ and $\|f - A\|_{\infty} \le \varepsilon$;
\item $D f(x) \in \Sym(n)\cap B_{\varepsilon}([B,C])$;
\item $|\{x \in \Omega: D f(x) = B\}|\ge (1 - \varepsilon)t|\Omega|$ and $|\{x \in \Omega: D f(x) = C\}|\ge (1 - \varepsilon)(1 - t)|\Omega|$.
\end{itemize}
\end{prop}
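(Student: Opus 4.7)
The plan is to produce $f$ in the form $f(x)=Ax+\nabla F(x)$ for a piecewise quadratic scalar potential $F:\Omega\to\R$. This ansatz automatically gives $Df=A+D^2F\in\Sym(n)$ because Hessians are symmetric, and reduces the problem to choosing $F$ so that $D^2F$ takes the value $B-A$ on a set of relative measure $(1-\varepsilon)t$, the value $C-A$ on a set of relative measure $(1-\varepsilon)(1-t)$, and values in an $\varepsilon$-neighborhood of the segment $[B-A,C-A]$ elsewhere, with $\nabla F\equiv 0$ on $\partial\Omega$ and $\|\nabla F\|_\infty\le\varepsilon$. Since $B-C\in\Sym(n)$ has rank one, I can write $B-C=\lambda\,\nu\otimes\nu$ for some unit $\nu\in\R^n$ and $\lambda\neq 0$; then $B-A=(1-t)\lambda\,\nu\otimes\nu$, $C-A=-t\lambda\,\nu\otimes\nu$, and the target segment $[B-A,C-A]$ is a $1$-dimensional set of symmetric rank-one matrices with common eigendirection $\nu$.

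In the bulk I would take $F$ depending only on $x\cdot\nu$: set $F_0(x)=G(x\cdot\nu)$, where $G:\R\to\R$ is the $p$-periodic, piecewise quadratic sawtooth with $G''=(1-t)\lambda$ on a subset of $[0,p]$ of measure $tp$ and $G''=-t\lambda$ on the complement. The identity $\int_0^p G''\,ds=0$ makes $G'$ a zero-mean periodic function, so $\|G'\|_\infty\le Cp|\lambda|$ and $\|G\|_\infty\le Cp^2|\lambda|$, and $D^2F_0=G''\,\nu\otimes\nu$ takes the correct bulk values with the correct Fubini proportions. To enforce the boundary condition I would multiply by a piecewise-affine cutoff: choose $\Omega'\subset\subset\Omega$ with $|\Omega\setminus\Omega'|\le\tfrac{\varepsilon}{2}|\Omega|$ and a Lipschitz $\eta:\bar\Omega\to[0,1]$ with $\eta\equiv 1$ on $\Omega'$, $\eta\equiv 0$ on $\partial\Omega$, and $|\nabla\eta|\le 1/\delta$. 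Setting $F(x)=\eta(x)^2\,G(x\cdot\nu)$ forces $\nabla F\equiv 0$ on $\{\eta=0\}\supset\partial\Omega$, and in the transition layer one computes
\begin{equation*}
D^2F=\eta^2G''\,\nu\otimes\nu+2\eta G'(\nabla\eta\otimes\nu+\nu\otimes\nabla\eta)+2G(\nabla\eta\otimes\nabla\eta+\eta\,D^2\eta),
\end{equation*}
the last two terms having norm bounded by $C|\lambda|(p/\delta+p^2/\delta^2)$; choosing $p\ll\varepsilon\delta/|\lambda|$ puts $D^2F$ inside $B_\varepsilon([B-A,C-A])$, while $\|\nabla F\|_\infty=O(p|\lambda|)\le\varepsilon$.

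The main obstacle is that the ansatz above produces only a $C^{1,1}$ map: the product $\eta^2G$ is piecewise quartic rather than piecewise quadratic, so $\nabla F$ is piecewise cubic, not piecewise affine. I would overcome this with a finite-element-type replacement, triangulating $\Omega$ by simplices compatible with the oscillation hyperplanes $\{x\cdot\nu\in p\mathbb{Z}\}$ and with the level sets of the cutoff, and prescribing on each simplex an explicit quadratic form for $F$ with Hessian equal to $(1-t)\lambda\,\nu\otimes\nu$ on Type-$B$ simplices, $-t\lambda\,\nu\otimes\nu$ on Type-$C$ simplices, or a fixed symmetric matrix in $B_\varepsilon([B-A,C-A])$ on the finitely many transition simplices, matched in the $1$-jet across shared faces and forced to vanish in the outermost layer. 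The existence of such an assignment is a finite combinatorial problem solved by a standard rank-one lamination pattern, and the transition region occupies a measure $O(\delta|\partial\Omega|)\le\tfrac{\varepsilon}{2}|\Omega|$, so the required proportions $|\{Df=B\}|\ge(1-\varepsilon)t|\Omega|$ and $|\{Df=C\}|\ge(1-\varepsilon)(1-t)|\Omega|$ are preserved.
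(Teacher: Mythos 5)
You have correctly identified the mechanism behind the statement: writing $f(x)=Ax+\nabla F(x)$ so that $Df=A+D^2F$ is automatically symmetric, exploiting that $B-C=\lambda\,\nu\otimes\nu$ for a symmetric rank-one matrix, and using the one-dimensional laminate $G(x\cdot\nu)$ with $G''\in\{(1-t)\lambda,-t\lambda\}$ in the correct proportions. That part, together with the smallness estimates for $\|\nabla F\|_\infty$, is sound and reproduces the ``interior'' half of the claim. The genuine gap is in your last paragraph: the passage from the $C^{1,1}$ cut-off ansatz to a genuinely \emph{piecewise affine} Lipschitz $f$ with $Df\in\Sym(n)\cap B_\varepsilon([B,C])$ and $f=Ax$ on $\partial\Omega$ is precisely the nontrivial content of the proposition, and you dispose of it by asserting that it is ``a finite combinatorial problem solved by a standard rank-one lamination pattern.'' It is not a free assignment problem: for a $C^1$ piecewise quadratic potential, the Hessians on two simplices sharing a face with normal $m$ can differ only by a multiple of $m\otimes m$ (Hadamard's jump condition applied to $\nabla F$), and the prescribed $1$-jets must in addition be globally consistent around lower-dimensional faces. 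So one cannot simply prescribe a quadratic form in $B_\varepsilon([B-A,C-A])$ on each transition simplex and ``match $1$-jets''; designing a layer in which the Hessian passes from $(1-t)\lambda\,\nu\otimes\nu$, resp.\ $-t\lambda\,\nu\otimes\nu$, down to $0$ while staying $\varepsilon$-close to the segment and producing zero boundary jet is exactly the basic construction whose existence the proposition asserts. Your argument therefore defers the core difficulty rather than resolving it.

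Two secondary points. First, as written even the intermediate step is shaky: with a Lipschitz (let alone piecewise affine) cutoff $\eta$, the term $\eta\,D^2\eta$ in your formula for $D^2F$ is only a measure, and $\nabla(\eta^2G)$ jumps across the creases of $\eta$ wherever $G\neq0$, so $f$ would not even be continuous; you need $\eta$ smooth with $\|D^2\eta\|\lesssim\delta^{-2}$, which is harmless but should be said. Second, for comparison: the paper itself gives no proof of this statement — it is quoted from \cite[Proposition 3.4]{KIRK} — and the known proof does not use a global boundary layer at all. Instead one performs an explicit piecewise quadratic basic construction on a model domain adapted to $\nu$ (where the rank-one jump conditions can be checked by hand), obtaining the exact affine boundary jet there, and then transfers it to an arbitrary open $\Omega$ by a Vitali covering with rescaled copies; this also explains why ``piecewise affine'' must be understood in the countable sense for general $\Omega$. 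Completing your approach would require supplying an argument of comparable substance for the transition layer, so as it stands the proof is incomplete.
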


\bibliographystyle{plain}
\bibliography{Area}
\end{document}